\documentclass[12pt]{amsart}



\usepackage[utf8]{inputenc}						
\usepackage{amsmath,amsfonts,amssymb,amsthm}	

\usepackage{geometry}		
\usepackage{mathtools}		
\usepackage{hyperref}		
\usepackage{setspace}		
\usepackage{cite}			
\usepackage{color}			


\geometry{textwidth=20.3cm-4.5cm,hratio=1:1,tmargin=2.75cm,bmargin=3cm}		
\setstretch{1.2}	


\definecolor{midpurple}{rgb}{0.6,0.2,0.4}
\hypersetup{colorlinks=true,citecolor=blue,linkcolor=blue,urlcolor=midpurple}


\numberwithin{equation}{section}			
\setcounter{tocdepth}{1}					


\newtheorem{theorem}{Theorem}[section]			
\newtheorem{proposition}[theorem]{Proposition}	

\newtheorem{definition}[theorem]{Definition}
\newtheorem{corollary}[theorem]{Corollary}

\newtheorem{fact}[theorem]{Fact}

\newtheorem*{theorem*}{Theorem}
\newtheorem*{proposition*}{Proposition}
\newtheorem*{lemma*}{Lemma}
\newtheorem*{corollary*}{Corollary}
\newtheorem*{remark*}{Remark}
\newtheorem*{definition*}{Definition}


\renewcommand{\leq}{\leqslant}	
\renewcommand{\geq}{\geqslant}	


\newcommand{\N}{\mathbb{N}}		
\newcommand{\Z}{\mathbb{Z}}		
\newcommand{\R}{\mathbb{R}}		
\newcommand{\C}{\mathbb{C}}		
\newcommand{\T}{\mathbb{T}}		


\newcommand{\eps}{\varepsilon}			
\newcommand{\cjg}{\overline}			


\newcommand{\wt}{\widetilde}		
\newcommand{\wh}{\widehat}			
\DeclareMathOperator{\Supp}{Supp}	



   
\DeclareMathOperator{\rk}{rk}			

\newcommand{\transp}{\mathsf{T}}		




\newcommand{\Nzero}{\mathbb{N}_{0}}		

\newcommand{\cont}[1]{\mathcal{C}(\mathbb{R}^{#1})}						
\newcommand{\schw}[1]{\mathcal{S}(\mathbb{R}^{#1})}						
\newcommand{\bump}[1]{\mathcal{C}^\infty_c(\mathbb{R}^{#1})}			
\newcommand{\cutoff}[1]{\mathcal{C}^\infty_{c,+}(\mathbb{R}^{#1})}		
\newcommand{\meas}[1]{\mathcal{M}^+(\mathbb{R}^{#1})}					

\newcommand{\dx}{\mathrm{d}x}				
\newcommand{\dy}{\mathrm{d}y}				
\newcommand{\du}{\mathrm{d}u}				
\newcommand{\dv}{\mathrm{d}v}				
\newcommand{\dxi}{\mathrm{d}\xi}			
\newcommand{\dkappa}{\mathrm{d}\kappa}		
\newcommand{\dtheta}{\mathrm{d}\theta}		
\newcommand{\dlambda}{\mathrm{d}\lambda}	
\newcommand{\deta}{\mathrm{d}\eta}			
\newcommand{\dsigma}{\mathrm{d}\sigma}		
\newcommand{\dmu}{\mathrm{d}\mu}			

\newcommand{\dbfxi}{\mathrm{d}\boldsymbol{\xi}}		
\newcommand{\dbfeta}{\mathrm{d}\boldsymbol{\eta}}	

\newcommand{\calH}{\mathcal{H}}				
\newcommand{\calF}{\mathcal{F}}				
\newcommand{\calM}{\mathcal{M}}				

\newcommand{\bfA}{\mathbf{A}}				
\newcommand{\bfB}{\mathbf{B}}				
\newcommand{\bfR}{\mathbf{R}}				
\newcommand{\bfxi}{\boldsymbol{\xi}}		
\newcommand{\bfeta}{\boldsymbol{\eta}}		

\newcommand{\leb}{\mathcal{L}}				
\newcommand{\dleb}{\mathrm{d}\mathcal{L}}	
\newcommand{\cube}[1]{\big[\! -\tfrac{1}{#1},\tfrac{1}{#1} \big]^n}


\begin{document}

\title{On polynomial configurations in fractal sets}

\author{K. Henriot, I. \L{}aba, M. Pramanik}

\date{}

\begin{abstract}
	We show that subsets of $\R^n$
	of large enough Hausdorff and Fourier dimension
	contain polynomial patterns of the form
	\begin{align*}
		( x ,\, x + A_1 y ,\, \dots,\, x + A_{k-1} y ,\, x + A_k y + Q(y) e_n ),
		\quad x \in \R^n,\ y \in \R^m, 
	\end{align*}
	where $A_i$ are real $n \times m$ matrices,
	$Q$ is a real polynomial in $m$ variables
	and $e_n = (0,\dots,0,1)$.
\end{abstract}

\maketitle

\section{Introduction}
\label{sec:intro}

In this work we investigate the presence
of point configurations in
subsets of $\R^n$ which are large in a certain sense. 
When $E$ is a subset of $\R^n$ of positive
Lebesgue measure, a consequence of the
Lebesgue density theorem is that
$E$ contains a similar copy of any finite set.
A more difficult result of Bourgain~\cite{Bourgain:DilatesConfig}
states that sets of positive upper density in $\R^n$ contain,
up to isometry, all large enough dilates of 
the set of vertices of 
any fixed non-degenerate $(n-1)$-dimensional simplex.
In a different setting, Roth's theorem~\cite{Roth:RothI}
in additive combinatorics states that subsets of 
$\Z$ of positive upper density
contain non-trivial three-term arithmetic progressions.

When a subset $E \subset \R$ is only supposed to have
a positive Hausdorff dimension,
a direct analogue of Roth's theorem is impossible.
Indeed Keleti~\cite{Keleti:NoConfigs} has constructed a set of full dimension in $[0,1]$
not containing the vertices of any non-degenerate parallelogram,
and in particular not containing any non-trivial three-term arithmetic progression.
Maga~\cite{Maga:NoConfigs} has since extended this construction to dimensions $n \geq 2$.
The work of \L{}aba and Pramanik~\cite{LP:Configs} 
and its multidimensional extension by Chan et al.~\cite{CLP:Configs}
circumvent these obstructions
under additional assumptions on the set $E$, 
which we now describe.

When $E$ is a compact subset of $\R^n$,
Frostman's lemma~\cite[Chapter~8]{Wolff:Book} essentially states that
its Hausdorff dimension is equal to
\begin{align*}
	\dim_{\calH} E &= 
	\sup\{ \alpha \in [0,n) \,:\, \exists\ \mu \in \calM(E) \,:\, 
	\sup\limits_{x \in \R^n,\, r >0} \mu\big[ B(x,r) \big] r^{-\alpha} < \infty \},
\end{align*}
where $\calM(E)$ is the space
of probability measures supported on $E$.
On the other hand, the Fourier dimension of $E$ is
\begin{align*}
	\dim_{\calF} E &= \sup\{ \beta \in [0,n) \,:\, \exists\ \mu \in \calM(E) \,:\, 
					\sup\limits_{\xi \in \R^n} |\wh{\mu}(\xi)| (1+|\xi|)^{-\beta/2}  < \infty \}.
\end{align*}
It is well-known that we have $\dim_{\calF}(E) \leq \dim_{\calH}(E)$ 
for every compact set $E$,
with strict inequality in many instances, 
and we call $E$ a Salem set when equality holds.
There are various known constructions of Salem 
sets~\cite{Salem:Salem,Kaufman:Salem,Bluhm:SalemI,Bluhm:SalemII,Kahane:Book,LP:Configs,Hambrooke:Salem},
several of which~\cite{Korner:Salem,Chen:Salem} also produce sets 
with prescribed Hausdorff and Fourier dimensions $0 < \beta \leq \alpha < n$.

In a very abstract setting,
one may ask whether it is possible to find 
translation-invariant patterns of the form
\begin{align}
\label{eq:intro:ShiftsPattern}
	\Phi(x,y) = ( x , x + \varphi_1(y) , \dots, x + \varphi_k (y) ) 
\end{align}
in the product set $E \times \dots \times E$, 
where the $\varphi_j : \Omega \subset \R^m \rightarrow \R^n$
are certain shift functions.
When $n + m \geq (k+1)n$,
the map $\Phi$ considered is often a 
submersion of an open subset of $\R^{n + m}$ onto $\R^{(k+1)n}$,
and then one can find a pattern of the desired kind in $E$
via the implicit function theorem.
A natural restriction is therefore to assume that $m < kn$
in this multidimensional setting.
Chan et al.~\cite{CLP:Configs}
studied the case where the maps $\varphi_j(y) = A_j y$ 
are linear for matrices $A_j \in \R^{n \times m}$, 
generalizing the study of \L{}aba and Pramanik
for three-term arithmetic progressions,
under the following technical assumption.

\begin{definition}
\label{thm:intro:NonDegen}
Let $n,k,m \geq 1$ and suppose that 
$m = (k - r)n + n'$ with $1 \leq r < k$ and $0 \leq n' < n$.
We say that the system of matrices
$A_1,\dots,A_k \in \R^{n \times m}$ is non-degenerate
when 
\begin{align*}
	\rk \begin{bmatrix}
		A_{j_1}^\transp & \hdots & A_{j_{k - r + 1}}^\transp	\\
		I_{n \times n} & \hdots & I_{n \times n} 
	\end{bmatrix} = (k - r + 1)n,
\end{align*}
for every set of indices $\{j_1,\dots,j_{k - r + 1}\} \subset \{ 0,\dots,k \}$,
with the convention that $A_0 = 0_{n \times n}$.
\end{definition}

Requirements similar to the above arise when analysing linear patterns 
by ordinary Fourier analysis in additive combinatorics~\cite{Roth:RothII},
and there is a close link with the modern definition of linear systems 
of complexity one~\cite{GW:Complexity}.
The main result of Chan et al.~\cite{CLP:Configs}
gives a fractal analogue of the multidimensional Szemerédi theorem~\cite{FK:MultiSzemeredi}
for non-degenerate linear systems,
when the Frostman measure has both dimensional and Fourier decay.
We only state it in the case where $n$ divides $m$ for simplicity.

\begin{theorem}[Chan, \L{}aba and Pramanik]
\label{thm:intro:CLP}
Let $n,k,m \geq 1$,
$D \geq 1$ and $\alpha, \beta \in (0,n)$.
Suppose that $E$ is a compact subset of $\R^n$
and $\mu$ is a probability measure supported on $E$
such that\footnote{
In fact, this theorem was proved in~\cite{CLP:Configs} under the more
restrictive condition $|\wh{\mu}(\xi)| \leq D(1+|\xi|)^{-\beta/2}$
for a fixed constant $D$.
However, by examining the proof there,
one can see that the constant 
$D = D_\alpha$ may be allowed to grow polynomially in $n-\alpha$,
as was the case in the original argument
of \L{}aba and Pramanik~\cite{LP:Configs}.
}
\begin{align*}
	\mu\big[ B(x,r) \big]
	\leq Dr^\alpha
	\quad\text{and}\quad
	|\wh{\mu}(\xi)| \leq D(n-\alpha)^{-D} (1+|\xi|)^{-\beta/2}
\end{align*}
for all $x \in \R^n$, $r >0$ and $\xi \in \R^n$. 
Suppose that $(A_1,\dots,A_k)$ is a non-degenerate system
of $n \times m$ matrices
in the sense of Definition~\ref{thm:intro:NonDegen}.
Assume finally that
$m = (k-r) n$ with $1 \leq r < k$
and, for some $\eps \in (0,1)$,
\begin{align*}
	\Big\lceil \frac{k}{2} \Big\rceil  n \leq m < kn,
	\qquad
	\frac{2(kn - m)}{k+1} + \eps \leq \beta < n,
	\qquad
	n - c_{n,k,m,\eps,D,(A_i)} \leq \alpha < n,
\end{align*}
for a sufficiently small constant $c_{n,k,m,\eps,D,(A_i)} > 0$.
Then, for every collection of strict subspaces $V_1,\dots,V_q$ of $\R^{n+m}$,
there exists $(x,y) \in \R^{n + m} \smallsetminus V_1 \cup \dots V_q$
such that
\begin{align*}
	( x , x + A_1 y , \dots, x + A_k y ) \in E^{k+1}.
\end{align*}
\end{theorem}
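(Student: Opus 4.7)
The plan is to count configurations via a nonnegative multilinear form and extract a strictly positive lower bound by Fourier analysis. Concretely, fix a nonnegative bump function $\psi \in \bump{m}$ with $\int \psi = 1$, a smooth mollifier $\phi_\eta$ at scale $\eta > 0$, and set
\begin{align*}
    \Lambda_\eta(\mu) := \int_{\R^n}\!\int_{\R^m} \prod_{j=0}^{k} (\mu * \phi_\eta)(x + A_j y)\, \psi(y)\, \dx\, \dy,
\end{align*}
with the convention $A_0 = 0_{n \times m}$. Showing that $\liminf_{\eta \to 0^+} \Lambda_\eta(\mu) > 0$, even after excising suitable neighbourhoods of $V_1, \dots, V_q$, produces in the limit a configuration $(x, x + A_1 y, \dots, x + A_k y) \in E^{k+1}$ with $(x, y) \notin V_1 \cup \cdots \cup V_q$.

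Next I would split $\mu = \mu^\flat + \mu^\sharp$ into low and high frequency parts at some large scale $R = R(\eta) \to \infty$, so that $\wh{\mu^\flat}$ is essentially supported on $|\xi| \leq R$. By the Frostman bound, $\mu^\flat$ behaves as a smooth function of controlled $L^\infty$ norm; expanding $\Lambda_\eta$ multilinearly and retaining only the pure low-frequency term $\Lambda_\eta(\mu^\flat, \dots, \mu^\flat)$ produces a \emph{main term} that is bounded below uniformly by a continuity argument for the form acting on approximate identities, provided $\alpha$ is sufficiently close to $n$. Every other term in the expansion contains at least one copy of $\mu^\sharp$ and constitutes an \emph{error term}.

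For such an error term, Plancherel in $x \in \R^n$ and in $y \in \R^m$ yields an expression of the shape
\begin{align*}
    \int_{(\R^n)^{k+1}} \prod_{j=0}^{k} \wh{\nu_j}(\xi_j) \cdot \wh{\psi}\!\Big(\sum_{j=0}^{k} A_j^\transp \xi_j\Big)\, \mathbf{1}\!\Big[\sum_{j=0}^{k} \xi_j = 0\Big]\, \dbfxi,
\end{align*}
where $\nu_j \in \{\mu^\flat, \mu^\sharp\}$ and at least one $\nu_j$ equals $\mu^\sharp$. The non-degeneracy hypothesis of Definition~\ref{thm:intro:NonDegen} is precisely what guarantees that, for every selection of $k-r+1$ indices, the combined linear map $\bfxi \mapsto (\sum \xi_j,\, \sum A_j^\transp \xi_j)$ is injective on the relevant coordinate subspace. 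A sequence of Cauchy--Schwarz manoeuvres, pairing high-frequency factors and invoking Plancherel after changes of variables dictated by the rank condition, reduces each error term to a product of $\|\wh{\mu^\sharp}\|_\infty^{\ell} \lesssim R^{-\beta \ell/2}$ (where $\ell \geq 1$ counts the high-frequency factors) against $L^2$-type norms of $\mu$ controlled by the Frostman hypothesis.

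The main obstacle is obtaining the sharp threshold $\beta > 2(kn - m)/(k+1)$, which encodes the precise deficit in non-degenerate directions of the system $(A_1, \dots, A_k)$. Achieving this exponent requires carefully balancing how many factors of $\wh{\mu^\sharp}$ are bounded pointwise versus via an $L^2$-energy estimate so that the Fourier decay is spent optimally across all $k+1$ factors. Once this multilinear fractal estimate is in place, summing over dyadic ranges of $R$ yields total error $o(1)$ as $\eta \to 0$, so the main term dominates and forces $\Lambda_\eta(\mu) > 0$. Finally, to ensure $(x,y) \notin V_1 \cup \cdots \cup V_q$, one subtracts from $\Lambda_\eta(\mu)$ the same integral restricted to a $\rho$-neighbourhood of $V_1 \cup \cdots \cup V_q$: since each $V_i$ is a strict subspace of $\R^{n+m}$, the Frostman bound shows this contribution to be $O(\rho^c)$ for some $c > 0$, and choosing $\rho$ small completes the argument.
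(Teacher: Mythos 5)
First, note that Theorem~\ref{thm:intro:CLP} is not proved in this paper at all: it is quoted from Chan--\L{}aba--Pramanik \cite{CLP:Configs} (with the footnoted remark that the constant may be allowed to grow polynomially in $n-\alpha$). The fair comparison is therefore with the argument of \cite{CLP:Configs}, which this paper mirrors in Sections~3--8. Your broad scheme --- a mollified counting form, a low/high frequency splitting of $\mu$, a main term from the absolutely continuous part and error terms controlled by Fourier decay --- is indeed the right skeleton and matches Proposition~\ref{thm:transf:MeasDcp} and the transference argument of Section~7. But two of your steps have genuine gaps.

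The first and most serious is the lower bound for the main term. You assert that $\Lambda_\eta(\mu^\flat,\dots,\mu^\flat)$ is ``bounded below uniformly by a continuity argument for the form acting on approximate identities.'' This is not a proof. After the splitting, $\mu^\flat=f\,\dleb$ with $0\leq f\leq C_1$, $\int f=1$, supported in a fixed cube, and what is needed is the \emph{uniform} Varnavides-type estimate $\Lambda(f,\dots,f)\gtrsim_{C_1}1$ over all such $f$ (uniformity is essential because $f$ varies with $\alpha$ while the error terms are only $o(1)$ as $\alpha\to n$). Lebesgue density or continuity gives positivity for each fixed $f$ but no uniform constant. In this paper this is Proposition~\ref{thm:scheme:AbsContEst}, proved via the $U^2$ arithmetic regularity lemma (Proposition~\ref{thm:reg:RegLemma}), Bohr-set almost-periodicity, and the Fourier control of Proposition~\ref{thm:scheme:OpFourierControl}; an analogous quantitative argument is the core of \cite{CLP:Configs}. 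Without it the proof does not close.

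The second gap is the avoidance of the subspaces $V_1,\dots,V_q$. You claim the contribution of a $\rho$-neighbourhood of $V_i$ is $O(\rho^c)$ ``by the Frostman bound.'' For a hyperplane such as $\{(x,y):y_1=0\}$ the Frostman condition on $\mu$ says nothing directly, and the naive pointwise bound on the inner integral is $\|\mu*\phi_\eta\|_\infty^{k}$, which blows up as $\eta\to0$, so one cannot take the two limits in the order you suggest. The paper instead constructs a limiting configuration measure $\nu$ and proves $\nu(H)=0$ for every hyperplane (Proposition~\ref{thm:config:nuZeroMassHp}) by working on the Fourier side, using the \emph{uniform} decay $|\wh{\mu}(\xi)|\lesssim(1+|\xi|)^{-\beta/2}$ together with the uniform-in-$(\kappa,\theta)$ finiteness of the shifted singular integral (Proposition~\ref{thm:intg:SingIntgBounds}). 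Relatedly, your error-term analysis only gestures at the mechanism producing the threshold $\beta\geq 2(kn-m)/(k+1)+\eps$: the actual content is a convergence estimate for the singular integral $\Lambda^*(|\wh{\mu}|,\dots,|\wh{\mu}|;|J|)$ in which the non-degeneracy of Definition~\ref{thm:intro:NonDegen} enters as injectivity of $\bfA^\transp$ on each coordinate slice, yielding fiber moment bounds for the kernel; asserting that ``Cauchy--Schwarz manoeuvres'' spend the decay optimally does not substitute for this computation.
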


Note that the Hausdorff dimension $\alpha$ is required to be large enough
with respect to the constants involved in the dimensional
and Fourier decay bounds for the Frostman measure. 
A construction due to Shmerkin~\cite{Shmerkin:Configs} shows that 
the dependence of $\alpha$ on the constants cannot be removed.

In practice, Salem set constructions provide
a family of fractal sets indexed by $\alpha$,
and it is often possible to verify the conditions of
Theorem~\ref{thm:intro:CLP} for $\alpha$ close to $n$ ; 
this was done in a number of cases in~\cite{LP:Configs}.
The requirement of Fourier decay
of the measure $\mu$ serves as an analogue
of the notion of pseudorandomness in additive combinatorics~\cite{TV:Book},
under which we expect a set to contain the same
density of patterns as a random set of same size.

In this work we consider a class of polynomial patterns,
which generalizes that of Theorem~\ref{thm:intro:CLP}.
We aim to obtain results similar in spirit to the Furstenberg-Sarközy
theorem~\cite{Sarkozy:Sarkozy,Furstenberg:Sarkozy} in additive combinatorics, 
which finds patterns of the form $( x , x + y^2)$ in dense subsets of $\Z$.
A deep generalization of this result is
the multidimensional polynomial Szemerédi theorem
in ergodic theory~\cite{BL:PolSzemeredi,BM:PolSzemeredi}
(see also~\cite[Section~6.3]{BLL:PolSzemeredi}),
which handles patterns of the form~\eqref{eq:intro:ShiftsPattern}
where each shift function $\varphi_j$
is an integer polynomial vector with zero constant term.
By contrast, the class of patterns we study includes only one polynomial term,
which should satisfy certain non-degeneracy conditions.
We are also forced to work with a dimension $n \geq 2$, 
and all these limitations are due to the inherent
difficulty in analyzing polynomial patterns through Fourier analysis.
On the other hand, we are able to relax
the Fourier decay condition on the fractal measure 
needed in Theorem~\ref{thm:intro:CLP}.

\begin{theorem}
\label{thm:intro:MainThm}
Let $n,m,k \geq 2$, $D \geq 1$ and $\alpha, \beta \in (0,n)$.
Suppose that $E$ is a compact subset of $\R^n$
and $\mu$ is a probability measure supported on $E$
such that
\begin{align*}
	\mu\big[ B(x,r) \big]
	\leq Dr^\alpha
	\quad\text{and}\quad
	|\wh{\mu}(\xi)| \leq D(n-\alpha)^{-D} (1+|\xi|)^{-\beta/2}
\end{align*}
for all $x \in \R^n$, $r >0$ and $\xi \in \R^n$. 
Suppose that $(A_1,\dots,A_k)$ is a non-degenerate system of real $n \times m$ matrices
in the sense of Definition~\ref{thm:intro:NonDegen}.
Let $Q$ be a real polynomial in $m$ variables such that $Q(0) = 0$ 
and the Hessian of $Q$ does not vanish at zero.
Assume furthermore that, for a constant $\beta_0 \in (0,n)$,
\begin{align*}
	(k-1) n  < m < kn,
	\qquad
	\beta_0 \leq \beta < n,
	\qquad
	n - c_{\beta_0,n,k,m,D,(A_i),Q} < \alpha < n,
\end{align*}
for a sufficiently small constant $c_{\beta_0,n,k,m,D,(A_i),Q} > 0$.
Then, for every collection $V_1,\dots,V_q$ of 
strict subspaces of $\R^{m+n}$,
there exists $(x,y) \in \R^{n + m} \smallsetminus (V_1 \cup \cdots \cup V_q)$
such that
\begin{align}
\label{eq:intro:OnePolPattern}
	(x,\, x + A_1 y, \dots,\, x + A_{k-1} y ,\, x + A_k y + Q(y) e_n) \in E^{k+1},
\end{align}
where $e_n = (0,\dots,0,1)$.
\end{theorem}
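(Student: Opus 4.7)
The plan is to adapt the multilinear Fourier framework of \cite{LP:Configs, CLP:Configs} by supplementing it with oscillatory-integral estimates tailored to the polynomial term $Q(y)e_n$. First I would fix a non-negative cutoff $\chi \in \bump{m}$ supported in a small neighborhood $U$ of $0 \in \R^m$, small enough that the Hessian of $Q$ remains non-degenerate on $U$ and that for a generic $(x,y) \in \Supp(\mu) \times U$ the configuration \eqref{eq:intro:OnePolPattern} avoids $V_1 \cup \cdots \cup V_q$. Form the multilinear counting functional
\begin{align*}
\Lambda(\mu) := \int \chi(y) \int \mu(x)\,\mu(x + A_1 y)\cdots \mu(x+A_{k-1}y)\,\mu(x + A_k y + Q(y)e_n)\, dx\, dy,
\end{align*}
interpreted as the limit of $\Lambda(\mu \ast \phi_\eta)$ as the mollification scale $\eta \to 0$. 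A strict lower bound $\Lambda(\mu) > 0$, after subtracting the contributions of degenerate configurations (those concentrated near $y = 0$ or lying in some $V_j$), would yield the desired pattern. Applying Plancherel in $x$ gives
\begin{align*}
\Lambda(\mu) = \int \wh{\mu}\Bigl(-\sum_{i=1}^k \xi_i\Bigr) \prod_{i=1}^k \wh{\mu}(\xi_i) \cdot J\Bigl( \sum_{i=1}^k A_i^\transp \xi_i,\, (\xi_k)_n\Bigr)\, d\xi_1\cdots d\xi_k,
\end{align*}
where $J(\eta, t) := \int \chi(y)\, e^{-2\pi i (\eta \cdot y + t Q(y))}\, dy$ isolates the polynomial geometry of the pattern from its linear skeleton.

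Next I would carry out the fine analysis of $J(\eta, t)$. For $|t| \lesssim 1$ we have $J(\eta, t) \approx \wh{\chi}(\eta)$, so the analysis reduces to the linear regime of Theorem~\ref{thm:intro:CLP}, in which Definition~\ref{thm:intro:NonDegen} controls the decay of the relevant linear oscillatory integrals. For $|t| \gg 1$ with $|\eta| \lesssim |t|$, the phase $t\bigl(Q(y) + \eta \cdot y/t\bigr)$ admits a unique non-degenerate critical point on $U$, and stationary phase yields $|J(\eta, t)| \lesssim |t|^{-m/2}$. For $|\eta| \gg |t|$, repeated integration by parts in $y$ produces arbitrary polynomial decay in $|\eta|$. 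In the intermediate regime $|t| \sim 1$ one must interpolate, splitting the support of $\chi$ into a neighborhood of the critical point and its complement.

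Finally I would decompose $\mu$ at a frequency scale $R$ as $\mu = \mu_{\le R} + \mu_{> R}$ with $\wh{\mu_{\le R}} = \wh{\mu}\cdot \mathbf{1}_{|\xi|\le R}$. The main term $\Lambda(\mu_{\le R}, \ldots, \mu_{\le R})$ is bounded below by a positive constant depending on $\chi, Q, D$ but \emph{not} on $n-\alpha$, by approximating smoothed $\mu$ by a bounded density via the Frostman estimate as $\alpha \to n$. Error terms contain at least one factor $\wh{\mu_{>R}}$ and must be shown to be $o(1)$ as $\alpha \to n$, by combining the pointwise Fourier decay $|\wh{\mu}(\xi)| \lesssim (n-\alpha)^{-D}(1+|\xi|)^{-\beta/2}$, the $L^2$ Plancherel-type bound $\int |\wh{\mu}(\xi)|^2 (1+|\xi|)^{-n+\alpha}\, d\xi \lesssim 1$ coming from the dimensional hypothesis, and the decay estimates for $J$. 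The main obstacle, which also dictates the dimensional window $(k-1)n < m < kn$, is the interplay between the stationary-phase gain $|t|^{-m/2}$ in $t = (\xi_k)_n$ and the loss incurred by integrating $|\wh{\mu}(\xi_k)|^2$ against this gain in the remaining $n - 1$ transverse directions of $\xi_k$. One must split the $\xi_k$-integral into the slice $(\xi_k)_n \approx 0$ (where the linear analysis of \cite{CLP:Configs} and the non-degeneracy of $(A_i)$ apply) and the regime $|(\xi_k)_n| \gtrsim 1$ (controlled by stationary phase), then balance these two contributions via a careful frequency decomposition while keeping all integrals convergent; this is the delicate heart of the argument.
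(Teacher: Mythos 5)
Your overall architecture (counting functional with a cutoff, passage to the dual Fourier form, stationary-phase decay of the oscillatory factor $J$, and a decomposition of $\mu$ into a smooth main part plus a Fourier-small error) does match the paper's transference strategy. However, there are two genuine gaps. The first concerns the convergence of the singular integral $\int \wh{\mu}(-\sum\xi_i)\prod\wh{\mu}(\xi_i)J\,d\bfxi$ and of all the error terms. You propose to control these using only the pointwise decay $|\wh\mu(\xi)|\lesssim (n-\alpha)^{-D}(1+|\xi|)^{-\beta/2}$ together with the energy bound $\int|\wh\mu(\xi)|^2(1+|\xi|)^{\alpha-n}\,d\xi\lesssim 1$. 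This is precisely the toolset that fails here: with $\beta$ only bounded below by an arbitrary $\beta_0>0$, the pointwise decay is far too weak to make the $kn$-dimensional integral converge against the kernel $(1+|\bfA^\transp\bfxi|)^{-m/2}$, and the energy estimate does not interact usefully with the fiber structure of the kernel. The missing ingredient is a restriction-theoretic moment bound: the Mitsis--Mockenhaupt/Bak--Seeger restriction theorem for fractal measures gives $\|\wh\mu\|_{L^{2+\delta}}<\infty$ (uniformly as $\alpha\to n$) for any $\delta>0$ once $\alpha$ is close enough to $n$ depending on $\delta$ and $\beta_0$, and it is this $L^{2+\delta}$ bound, fed into a H\"older argument over the $(k-1)n$-dimensional slices $\{\xi_j=\eta\}$ (on which $\bfA^\transp$ is injective by non-degeneracy), that makes the singular integral finite. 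This is also where the condition $m>(k-1)n$ actually arises --- it is the integrability threshold of $(1+|\bfA^\transp\bfxi|)^{-m'p'/2}$ on those $(k-1)n$-dimensional fibers, not a balance between the gain in the single variable $(\xi_k)_n$ and a loss in the $n-1$ transverse directions as you suggest.

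The second gap is the lower bound on the main term. Even granting that the mollified measure has a bounded density $f$ with $\int f=\tau>0$ (note that your sharp cutoff $\wh{\mu_{\le R}}=\wh\mu\cdot\mathbf{1}_{|\xi|\le R}$ destroys non-negativity, so one should mollify by convolution instead), the inequality $\Lambda(f,\dots,f)\gtrsim_\tau 1$ is not automatic for a pattern containing the polynomial shift $A_ky+Q(y)e_n$: a bounded density of mass $\tau$ could in principle concentrate where the polynomial configuration count is small. The paper must prove a quantitative Furstenberg--S\'ark\"ozy-type statement here, via the $U^2$ arithmetic regularity lemma combined with a Weyl-type lower bound $\leb\{y\in[-c,c]^m:\varphi_1(y),\dots,\varphi_k(y)\in B\}\gtrsim_{d,\delta}1$ for Bohr sets $B$, the latter resting on a diophantine approximation estimate for the monomials $y^I$. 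Your proposal contains no substitute for this step, and without it the claimed positivity of the main term is unjustified. (The subspace-avoidance also needs more than genericity: the paper constructs an actual configuration measure $\nu$ and proves $\nu(H)=0$ for every hyperplane using a further shifted singular-integral bound, but this is closer in spirit to what you sketch.)
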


Our argument follows broadly the transference strategy
devised by \L{}aba and Pramanik~\cite{LP:Configs},
and its extension by Chan and these two authors~\cite{CLP:Configs}.
However, the case of polynomial configurations requires
a more delicate treatment of the singular integrals arising in the analysis.
The weaker condition on $\beta$ is obtained
by exploiting restriction estimates for fractal measures
due to Mitsis~\cite{Mitsis:RestrFrac} 
and Mockenhaupt~\cite{Mockenhaupt:RestrFrac}.
A more detailed outline of our strategy can
be found in Section~\ref{sec:scheme}.
By the method of this paper, one can also obtain an analogue
of Theorem~\ref{thm:intro:CLP} with the same relaxed condition on 
the exponent $\beta$, and we state this version
precisely in Section~\ref{sec:lin}.

For concreteness' sake, we highlight the lowest dimensional situation
handled by Theorem~\ref{thm:intro:MainThm}.
When $k=n=2$ and $m=3$, this theorem allows us to detect patterns
of the form
\begin{align*}
	\begin{bmatrix} x_1 \\ x_ 2 \end{bmatrix},\
	\begin{bmatrix} x_1 \\ x_ 2 \end{bmatrix}
	+ A_1 \begin{bmatrix} y_1 \\ y_2 \\ y_3 \end{bmatrix},
	\begin{bmatrix} x_1 \\ x_ 2 \end{bmatrix}
	+ A_2 \begin{bmatrix} y_1 \\ y_2 \\ y_3 \end{bmatrix}
	+ \begin{bmatrix} 0 \\ Q(y_1,y_2,y_3) \end{bmatrix},
\end{align*}
for matrices $A_1,A_2 \in \R^{2 \times 3}$ of full rank
such that $A_1 - A_2$ has full rank,
and for a non-degenerate quadratic form $Q$ in three variables.
We may additionally impose that 
$y_1,y_2,y_3 \in \R \smallsetminus \{0\}$ 
by setting $V_i = \{ (x,y) \in \R^5 \,:\, y_i = 0\}$
in Theorem~\ref{thm:intro:MainThm}.
For example, when
$A_1 = \bigl[ \begin{smallmatrix} 1 & 0 & 0 \\ 0 & 1 & 0 \end{smallmatrix} \bigr]$,
$A_2 = \bigl[ \begin{smallmatrix} 0 & 0 & 1 \\ 1 & 0 & 0 \end{smallmatrix} \bigr]$
and $Q(y) = |y|^2$, we can detect the configuration
\begin{align*}
	\begin{bmatrix} x_1 \\ x_ 2 \end{bmatrix},\
	\begin{bmatrix} x_1 + y_1 \\ x_ 2 + y_2 \end{bmatrix},\,
		\begin{bmatrix} x_1 + y_3 \\ x_ 2 + y_1 + y_1^2 + y_2^2 + y_3^2 \end{bmatrix}
\end{align*}
with $y_1,y_2,y_3 \in \R \smallsetminus \{0\}$.
However, we cannot detect the configuration
\begin{align*}
	(x,x + y,x + y^2),\quad x \in \R,\, y \in \R \smallsetminus \{0\},
\end{align*}
for then we have $n=m=1$ and $k=2$, and
the condition $m > (k-1) n$ is not satisfied.

Note also that, in the statement of Theorem~\ref{thm:intro:MainThm}, 
one may add a linear term in variables $y_1,\dots,y_m$ to
the polynomial $Q$ without affecting the assumptions on it.
This allows for some flexibility in satisfying the matrix
non-degeneracy conditions of Definition~\ref{thm:intro:NonDegen},
since one may alter the last line of $A_k$ at will.
For example, the degenerate system of matrices
$\bigl[ \begin{smallmatrix} 1 & 0 & 0 \\ 0 & 1 & 0 \end{smallmatrix} \bigr]$,
$\bigl[ \begin{smallmatrix} 0 & 0 & 1 \\ 0 & 0 & 0 \end{smallmatrix} \bigr]$
and the polynomial $Q(y) = |y|^2$
give rise to the configuration
\begin{align*}
	\begin{bmatrix} x_1 \\ x_ 2 \end{bmatrix},\
	\begin{bmatrix} x_1 + y_1 \\ x_ 2 + y_2 \end{bmatrix},\,
	\begin{bmatrix} x_1 + y_3 \\ x_ 2 + |y|^2 \end{bmatrix}.
\end{align*}
Rewriting $|y|^2 = y_1 + y_2 + y_3 + Q_1(y)$,
we see that $Q_1$ still has non-degenerate Hessian at zero
and the configuration is now associated to the system of matrices
$\bigl[ \begin{smallmatrix} 1 & 0 & 0 \\ 0 & 1 & 0 \end{smallmatrix} \bigr]$,
$\bigl[ \begin{smallmatrix} 0 & 0 & 1 \\ 1 & 1 & 1 \end{smallmatrix} \bigr]$,
which is easily seen to be non-degenerate.
One possible explanation for this curious phenomenon is that,
by comparison with the setting of Theorem~\ref{thm:intro:CLP},
we have an extra variable at our disposition,
since $m > (k-1)n \geq \lceil k/2 \rceil n$.

Finally, we note that there is a large body of literature
on configurations in fractal sets where Fourier decay assumptions are not required.
Here, the focus is often on finding a large variety (in a specified quantitative sense) of certain types
of configurations.
A well-known conjecture of Falconer~\cite[Chapter~9]{Wolff:Book} 
states that when a compact subset $E$ of $\R^n$ has Hausdorff dimension 
at least $n/2$, its set of distances $\Delta(E) = \{ |x-y|,\, x,y \in E \}$
must have positive Lebesgue measure. 
This can be phrased in terms of $E$ containing configurations
$\{x,y\}$ with $|x-y|=d$ for all $d\in \Delta(E)$, where $\Delta(E)$ is ``large."
Wolff~\cite{Wolff:DistanceSets} and Erdo\~gan~\cite{Erdogan:DistanceSets1,Erdogan:DistanceSets2} 
proved that the distance set $\Delta(E)$ has positive Lebesgue measure for $\dim_{\mathcal{H}} E > \frac{n}{2} + \frac{1}{3}$,
and Mattila and Sjölin~\cite{MS:DistanceSets} showed that 
it contains an open interval for $\dim_{\mathcal{H}} E > \frac{n+1}{2}$.
More recently, Orponen~\cite{Orponen:DistanceSets} proved using very different methods that 
$\Delta(E)$ has upper box dimension 1 if $E$ is $s$-Ahlfors-David regular with $s\geq 1$. 
There is a rich literature generalizing these results
to other classes of configurations,
such as triangles~\cite{GI:GenConfigs}, simplices~\cite{GGIP:GenConfigs,GILP:GenConfigs},
or sequences of vectors with prescribed consecutive lengths~\cite{BIT:Chains,GIP:Necklaces}.

In a sense, the configurations studied in these
references enjoy a greater degree of directional freedom,
which ensures that they are not avoided by sets of full Hausdorff dimension.
By contrast, a Fourier decay assumption is necessary
to locate $3$-term progressions in a fractal set of
full Hausdoff dimension (as mentioned earlier), 
and in light of recent work of Mathé~\cite{Mathe:NoConfigs}, 
it is likely that a similar assumption 
is needed to find polynomial patterns of the form~\eqref{eq:intro:OnePolPattern}.
It is, however, possible that our non-degeneracy assumptions are not optimal, or that special cases of our results 
could be proved without Fourier decay 
assumptions\footnote{
After this article was first submitted for publication, 
a result of this type was indeed proved by Iosevich and Liu~\cite{IL:Triangles}.}. 
Loosely speaking, we would expect that configurations
with more degrees of freedom are less likely to require Fourier conditions, but the specifics are
far from understood and we do not feel that we have sufficient
data to attempt to make a conjecture in this direction.

\textbf{Acknowledgements.}
This work was supported by NSERC Discovery grants
22R80520 and 22R82900.

\section{Notation}
\label{sec:notation}

We define the following standard spaces of complex-valued functions and measures:
\begin{align*}
	\cont{d} 	&= \{ \text{continuous functions on $\R^d$} \}, \\
	\schw{d} 	&= \{ \text{Schwartz functions on $\R^d$} \}, \\
	\bump{d} 	&= \{ \text{smooth compactly-supported functions on $\R^d$} \}, \\
	\cutoff{d}	&= \{ \text{non-negative smooth compactly-supported functions on $\R^d$} \}, \\
	\meas{d}	&= \{ \text{finite non-negative Borelian measures on $\R^d$} \}.
\end{align*}
Similar notation is employed for functions on $\T^d$.
We write $e(x) = e^{2i\pi x}$ for $x \in \R$.
We let $\mathcal{L}$ denote either the Lebesgue measure on $\R^d$ or 
the normalized Haar measure on $\T^d$.
We let $\dsigma$ denote generically the Euclidean surface measure on a submanifold of $\R^d$.
When $f$ is a function on an abelian group $G$ and $t$ is an element of $G$, 
we denote the $t$-shift of $f$ by $T^t f (x) = f( x + t )$.
When $A$ is a matrix we denote its transpose by $A^\transp$.
We also write $[n] = \{1,\dots,n\}$ for an integer $n$
and $\Nzero = \N \cup \{0\}$.

\section{Broad scheme}
\label{sec:scheme}

In this section we introduce the basic objects
that we will work with in this paper.
We also state the intermediate propositions corresponding
to the main steps of our argument,
and we derive Theorem~\ref{thm:intro:MainThm} from them
at the outset.

We fix a compact set $E \subset \R^n$
and a probability measure $\mu$ supported on $E$. 
For technical reasons, 
we suppose that $E \subset \cube{16}$.
We fix two exponents $0 < \beta \leq \alpha < n$,
as well as two constants $D, D_\alpha \geq 1$,
where the subscript in the second constant 
indicates that it is allowed to vary with $\alpha$.
We assume that the measure $\mu$ verifies
the following dimensional and Fourier decay conditions:
\begin{align}
	\label{eq:scheme:BallDecay}
	&\phantom{(x \in \R^n, r > 0 )} &
	\mu\big[B(x,r)\big] &\leq D r^{\alpha} &
	&(x \in \R^n, r > 0), \\
	\label{eq:scheme:FourierDecay}
	&\phantom{(\xi \in \R^n)} &	
	|\wh{\mu}(\xi)| &\leq D_{\alpha} (1 + |\xi|)^{-\beta/2}
	&&(\xi \in \R^n).
\end{align}
We suppose that the second constant involved
blows up (if at all) at most polynomially as $\alpha$ tends to $n$:
\begin{align}
\label{eq:scheme:Dgrowth}
	D_{\alpha} \lesssim (n - \alpha)^{-O(1)}.
\end{align}

We also let $k \geq 3$ and we consider smooth functions
$\varphi_1,\dots,\varphi_k : \Omega \subset \R^m \rightarrow \R^n$,
where $\Omega$ is an open neighborhood of zero.
We are interested in locating the pattern
\begin{align}
\label{eq:scheme:ShiftsPattern}
	\Phi(x,y) = (x, x + \varphi_1(y), \dots, x + \varphi_k(y) )
\end{align}
in $E^{k+1}$.
While this abstract notation is sometimes useful,
in practice we work with the maps
\begin{align}
\label{eq:scheme:OnePolPattern}
	(\varphi_1(y),\dots,\varphi_k(y)) 
	= ( A_1 y, \dots ,\, A_{k-1} y ,\, A_k y + Q(y)e_n ),
\end{align}
where $(A_1,\dots,A_k)$
is a non-degenerate system of $n \times m$ matrices
in the sense of Definition~\ref{thm:intro:NonDegen}
and $Q \in \R[y_1,\dots,y_m]$
is such that $Q(0) = 0$ and the Hessian of $Q$ does not vanish at zero.
We also fix a smooth cutoff $\psi \in \cutoff{m}$ supported on $\Omega$
such that $\psi \geq 1$ on a small box $[-c,c]^m$
and the Hessian of $Q$ is bounded away from zero on the support of $\psi$.
This cutoff is used in Definition~\ref{thm:scheme:ConfigOp} below.
We take the opportunity here to state an equivalent form
of Definition~\ref{thm:intro:NonDegen} when $m \geq (k-1)n$.

\begin{definition}
\label{thm:scheme:NonDegen}
If $m \geq (k-1)n$, we say that
the system of matrices $(A_i)_{1 \leq i \leq k}$ with $A_i \in \R^{n \times m}$
is non-degenerate when, 
for every $1 \leq j \leq k$
and writing $[k] = \{ i_1,\dots,i_{k-1},j \}$,
the matrices 
\begin{align*}
	[ A_1^\transp \ \hdots\  \wh{A}_j^\transp \ \hdots \ A_k^\transp ],
	\qquad
	[ (A_{i_1}^\transp - A_j^\transp) \ \hdots \ (A_{i_{k-1}}^\transp - A_j^\transp)]
\end{align*}
(where the hat indicates omission) have rank $(k-1)n$.
\end{definition}

We also state a few notational conventions 
applied throughout the article.
When $(A_1,\dots,A_k)$ is a system of $n \times m$ matrices,
we define the $kn \times m$ matrix $\bfA$ by
$\bfA^\transp = [ A_1^\transp \,\dots\, A_k^\transp ]$.
Unless mentioned otherwise,
we allow every implicit or explicit constant in the article
to depend on the integers $n,k,m$, the constant $D$,
the matrices $A_i$ and the polynomial $Q$,
and the cutoff function $\psi$.
This convention is already in effect in the
propositions stated later in this section.

We start by defining a multilinear form 
which plays a central role in our argument. 

\begin{definition}[Configuration form]
\label{thm:scheme:ConfigOp}
For functions $f_0, \dots, f_k \in \schw{n}$, we let
\begin{align*}
	\Lambda(f_0,\dots,f_k) 
	= \int_{\R^n} \int_{\R^m} f_0(x) f_1(x + \varphi_1(y)) \cdots f_k(x + \varphi_k(y)) \dx\, \psi(y) \dy.
\end{align*}
\end{definition}

In Section~\ref{sec:ops}, we show that
the multilinear form has the following
convenient Fourier expression.

\begin{proposition}
\label{thm:scheme:FourierInv}
For measurable functions $F_0,\dots,F_k$ on $\R^n$ and $K$ on $\R^{nk}$, we let
\begin{align}
\label{eq:scheme:SingIntg}
	\Lambda^*( F_0, \dots, F_k ; K )
	= \int_{(\R^n)^k} F_0( - \xi_1 - \dotsb - \xi_k )
	F_1( \xi_1 ) \cdots F_k( \xi_k ) K(\bfxi) \dbfxi,
\end{align}
whenever the integral is absolutely convergent
or the integrand is non-negative.
For every $f_0,\dots,f_k \in \schw{n}$, we have
\begin{align*}
	\Lambda( f_0, \dots, f_k )
	= \Lambda^*( \wh{f}_0, \dots, \wh{f}_k ; J),
\end{align*}
where $J$ is the oscillatory integral of Definition~\ref{thm:ops:OscIntg}.
\end{proposition}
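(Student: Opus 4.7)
The plan is to apply the Fourier inversion formula to each factor $f_j$ in Definition~\ref{thm:scheme:ConfigOp} and collapse the resulting $x$-integration into a linear constraint on the frequencies, leaving behind exactly the oscillatory integral $J$ in the $y$-variables. Concretely, writing $f_0(x) = \int_{\R^n} \wh{f}_0(\xi_0)\, e(\xi_0 \cdot x)\, d\xi_0$ and, for each $j = 1,\dots,k$, $f_j(x + \varphi_j(y)) = \int_{\R^n} \wh{f}_j(\xi_j)\, e(\xi_j \cdot (x + \varphi_j(y)))\, d\xi_j$, and substituting into the definition of $\Lambda(f_0,\dots,f_k)$, the integrand factors into the amplitude $\wh{f}_0(\xi_0) \prod_{j=1}^k \wh{f}_j(\xi_j)\, \psi(y)$ times the phase $e\bigl((\xi_0 + \xi_1 + \dots + \xi_k)\cdot x\bigr)\, e\bigl(\sum_{j=1}^k \xi_j \cdot \varphi_j(y)\bigr)$.

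To handle the (non-absolutely convergent) $x$-integration in a classical manner, I would fix $y$ and apply Parseval's identity directly to
\begin{equation*}
I(y) := \int_{\R^n} f_0(x)\, g_y(x)\, dx, \qquad g_y(x) := \prod_{j=1}^k f_j(x + \varphi_j(y)),
\end{equation*}
which gives $I(y) = \int_{\R^n} \wh{f}_0(-\xi)\, \wh{g}_y(\xi)\, d\xi$. Since translation by $\varphi_j(y)$ becomes multiplication by $e(\xi_j \cdot \varphi_j(y))$ on the Fourier side, the convolution theorem expresses $\wh{g}_y$ as the iterated convolution of the Schwartz functions $\xi_j \mapsto e(\xi_j \cdot \varphi_j(y))\, \wh{f}_j(\xi_j)$. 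Expanding this convolution with $\xi_k := \xi - \xi_1 - \dots - \xi_{k-1}$ and re-setting $\xi = \xi_1 + \dots + \xi_k$ produces the identity
\begin{equation*}
I(y) = \int_{(\R^n)^k} \wh{f}_0(-\xi_1 - \dots - \xi_k)\, \prod_{j=1}^k \wh{f}_j(\xi_j)\, e\!\left(\sum_{j=1}^k \xi_j \cdot \varphi_j(y)\right) d\bfxi.
\end{equation*}

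Multiplying by $\psi(y)$ and integrating in $y$, the compactly supported $y$-integration and the absolutely convergent $\bfxi$-integration (guaranteed by the Schwartz decay of the $\wh{f}_j$) can be swapped by Fubini. The inner $y$-integral is precisely $J(\bfxi) = \int_{\R^m} \psi(y)\, e\bigl(\sum_{j=1}^k \xi_j \cdot \varphi_j(y)\bigr)\, dy$, which is the oscillatory integral of Definition~\ref{thm:ops:OscIntg}. This yields
\begin{equation*}
\Lambda(f_0,\dots,f_k) = \int_{(\R^n)^k} \wh{f}_0(-\xi_1 - \dots - \xi_k)\, \prod_{j=1}^k \wh{f}_j(\xi_j)\, J(\bfxi)\, d\bfxi = \Lambda^*(\wh{f}_0,\dots,\wh{f}_k; J),
\end{equation*}
as desired.

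The argument is essentially a bookkeeping exercise and has no substantial obstacle; the only subtlety is to avoid the ill-defined iterated $x$-integral $\int e((\xi_0 + \dots + \xi_k)\cdot x)\, dx$ by routing through Parseval's identity for a fixed $y$, which sidesteps any need for distributional calculus. All interchanges of integration are then justified by the Schwartz decay of the $\wh{f}_j$, the boundedness $\|J\|_\infty \leq \|\psi\|_{L^1}$, and the compact support of $\psi$.
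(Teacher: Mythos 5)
Your proof is correct and follows essentially the same route as the paper: Fourier-expand the shifted factors, identify the $x$-integral as $\wh{f}_0(-\xi_1-\dotsb-\xi_k)$, and apply Fubini (justified by Schwartz decay and the compact support of $\psi$) to pull out the $y$-integral $J(\bfxi)$. The only cosmetic difference is that the paper never Fourier-expands $f_0$ itself, so the "ill-defined iterated $x$-integral" you route around via Parseval simply does not arise there; both justifications are valid.
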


We may extend the configuration operator to measures,
whenever we have absolute convergence of the dual form.

\begin{definition}[Configuration form for measures]
\label{thm:scheme:OpMeasures}
When $\lambda_0,\dots,\lambda_k \in \meas{n}$
are such that $\Lambda^*(\, |\wh{\lambda}_0|,\dots,|\wh{\lambda}_k| ; |J| \,) < \infty$, 
we define
\begin{align*}
	\Lambda(\lambda_0,\dots,\lambda_k) 
	= \Lambda^*(\wh{\lambda}_0,\dots,\wh{\lambda}_k ; J).
\end{align*}
When $\lambda_j \in \schw{n}$,
this is compatible with Definition~\ref{thm:scheme:ConfigOp}
by Proposition~\ref{thm:scheme:FourierInv}.
\end{definition}

The next step, carried out in Section~\ref{sec:intg},
is to obtain bounds for the dual multilinear form
evaluated at the Fourier-Stieljes transform of the fractal measure $\mu$.
Such bounds hold only in certain ranges of $\alpha,\beta$ 
and under certain restrictions on $n,k,m$.

\begin{proposition}
\label{thm:scheme:SingIntgBound}
Let $\beta_0 \in (0,n)$ and suppose that
for a constant $c > 0$
small enough with respect to $n,k,m$,
\begin{align}
\label{eq:scheme:DimensionConditions}
	(k-1) n < m < kn,
	\quad
	\beta_0 \leq \beta < n,
	\quad
	n - c \beta_0 \leq \alpha < n.
\end{align}
Then
\begin{align}
\label{eq:scheme:SingIntgBound}
	\Lambda^*(\, |\wh{\mu}| , \dots, |\wh{\mu}| ; |J| \,)
	\lesssim_{\beta_0} (n - \alpha)^{-O(1)}.
\end{align}
\end{proposition}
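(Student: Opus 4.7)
The plan is to reduce the bound for $\Lambda^*(|\wh{\mu}|,\dots,|\wh{\mu}|;|J|)$ to a combination of (i) pointwise decay estimates for the oscillatory kernel $J$, obtained via stationary/non-stationary phase applied to the polynomial part of the configuration; and (ii) a fractal restriction estimate à la Mitsis--Mockenhaupt that handles the coupling factor $|\wh{\mu}(\xi_1+\dots+\xi_k)|$ efficiently, so that only the mild hypothesis $\beta\geq\beta_0$ is required. Throughout, the dimensional condition $(k-1)n<m<kn$ and the non-degeneracy of $(A_i)$ should ensure that the critical set of the phase is low-dimensional enough to be integrable against the $k$ outer Fourier factors of $\wh{\mu}$, while $\alpha$ close to $n$ absorbs the loss $D_\alpha=D(n-\alpha)^{-D}$ and tightens the restriction exponent.

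First I would analyze $J(\bfxi)$. Writing $\eta=\bfA^{\transp}\bfxi\in\R^m$ and $\tau=(\xi_k)_n$, the phase is $\eta\cdot y+\tau Q(y)$ and $\psi$ is supported where $\nabla^2 Q$ is uniformly non-degenerate. Standard stationary-phase gives a unique critical point $y^\ast=y^\ast(\eta/\tau)$ on a cutoff region $|\eta|\lesssim|\tau|$ with $|J(\bfxi)|\lesssim(1+|\tau|)^{-m/2}$, while van der Corput / integration by parts yields rapid decay $|J(\bfxi)|\lesssim_N(1+|\eta|+|\tau|)^{-N}$ off the critical set $\{\eta+\tau\nabla Q(y)=0 \text{ for some }y\in\supp\psi\}$. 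This effectively concentrates $|J|$ on a smooth $(nk-m)$-dimensional manifold $\Sigma$, with a transverse $L^1$-profile of size $(1+|\tau|)^{-m/2}$ and Schwartz tails.

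Next I would decompose the frequency space dyadically by $|\xi_j|\sim N_j$ and $|\tau|\sim M$, and on each piece bound the integrand by $(n-\alpha)^{-O(1)}\prod_{j=1}^{k}(1+N_j)^{-\beta/2}\,|\wh\mu(\xi_0)|\,|J|$, where $\xi_0=-\xi_1-\dots-\xi_k$. The factor $|\wh\mu(\xi_0)|$ is handled via the Mitsis--Mockenhaupt restriction inequality $\|\wh{f\mu}\|_{L^q(\R^n)}\lesssim\|f\|_{L^2(\mu)}$, valid for $q>q_c(\alpha,\beta)$ with $q_c\to 2$ as $\alpha\to n$. Dualizing and applying Hölder in $\xi_0$ on each dyadic shell converts this factor into an $L^{q'}$-type norm on the remaining variables, whose kernel structure is governed by $|J|$ concentrated on $\Sigma$. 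Provided $\alpha$ is sufficiently close to $n$ depending on $\beta_0$, one has $q'$ near $2$, and the gain from the concentration of $|J|$ on a manifold of codimension $m$ balances against the measure of each dyadic shell.

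The main obstacle is ensuring that the dyadic sum actually converges with only the hypothesis $\beta\geq\beta_0$. In the linear case $Q\equiv 0$ of~\cite{CLP:Configs}, $J$ collapses to $\wh{\psi}(\bfA^{\transp}\bfxi)$ on a flat subspace and one can use non-degeneracy of $\bfA$ to extract enough Fourier decay from each $\wh\mu(\xi_j)$ directly; here the curvature of $\Sigma$ (from $Q$ having non-degenerate Hessian) is both a help (giving stationary-phase gains $(1+|\tau|)^{-m/2}$) and a hindrance, since the critical point $y^\ast$ depends non-linearly on $\bfxi$ and mixes the variables. The restriction-theoretic input of Mitsis--Mockenhaupt is used precisely to circumvent a naïve Plancherel-type estimate that would require $\beta>2(kn-m)/(k+1)$ and instead yields the bound under $\beta\geq\beta_0$, with the $(n-\alpha)^{-O(1)}$ loss accumulating only from the polynomial growth of $D_\alpha$ and finitely many dyadic tails.
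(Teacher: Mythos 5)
Your proposal assembles the right ingredients --- the stationary-phase decay of $J$ coming from the non-degenerate Hessian of $Q$, the non-degeneracy of $\bfA$, and the Mitsis--Mockenhaupt restriction theory as the mechanism that relaxes the condition on $\beta$ --- but the way you combine them has a genuine gap. You propose to bound the integrand on each dyadic piece by $(n-\alpha)^{-O(1)}\prod_{j=1}^{k}(1+N_j)^{-\beta/2}\,|\wh\mu(\xi_0)|\,|J|$, i.e.\ to use the \emph{pointwise} Fourier decay on the $k$ outer factors and reserve the restriction estimate for the single coupled factor $\wh\mu(\xi_0)$. This cannot close for small $\beta_0$. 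A back-of-the-envelope count on the shell $|\xi_j|\sim N$: integrating $|J|\lesssim(1+|\bfA^\transp\bfxi|)^{-m/2}$ over the shell gives roughly $N^{kn-m/2}$, the pointwise factors contribute $N^{-k\beta/2}$, and upgrading the $\xi_0$ factor from $L^\infty$ to $L^{2+\delta}$ via restriction gains at most $N^{-n/(2+\delta)}\approx N^{-n/2}$. Convergence of the dyadic sum would then require $m/2+n/2+k\beta/2>kn$, which for $\beta$ near $0$ forces $m>(2k-1)n$, contradicting $m<kn$. In other words, with only $\beta\geq\beta_0$ available, the pointwise decay of the outer factors is essentially worthless, and the integrability burden must be shared among \emph{all} $k+1$ copies of $\wh\mu$, not placed on one.

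The paper's actual argument does exactly this sharing. It first converts restriction into the moment bound $\|\wh\mu\|_{2+\delta}\lesssim_{\beta_0}(n-\alpha)^{-O(1)}$ (Proposition~\ref{thm:restr:MuMoment}), and then proves a purely deterministic multilinear estimate $\Lambda^*(F_0,\dots,F_k;K_{\theta,m'})\lesssim\prod_j\|F_j\|_s$ for $s$ close to $2$ (Proposition~\ref{thm:intg:MainIntgBound}), whose proof is a two-step H\"older interpolation: a ``splitting'' H\"older bound for the kernel-free part, and an integration along the fibers $\{\xi_j=\eta\}$ for the part carrying $K^{p'}$. The quantitative input there is Proposition~\ref{thm:intg:FiberCdt}: the kernel $(1+|\bfA^\transp\bfxi+\theta|)^{-q}$ has uniformly bounded integrals over every fiber $\{\xi_j=\eta\}$, $0\leq j\leq k$, for $q>(k-1)n$, and this is precisely where both the non-degeneracy of Definition~\ref{thm:scheme:NonDegen} and the condition $m>(k-1)n$ enter; the threshold $(k-1)n<m$ then pops out of the exponent bookkeeping as $m>2kn-2(k+1)n/s$ with $s\to2$. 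Your sketch replaces this fiber-moment lemma with the heuristic that the critical set of the phase is ``low-dimensional enough,'' but the critical variety of $J$ is in fact a full-dimensional cone in the $(\bfA^\transp\bfxi,\xi_{kn})$ variables (the paper only ever uses the uniform bound $(1+|\bfA^\transp\bfxi|)^{-m/2}$, not any concentration of $|J|$ on a codimension-$m$ set), so the slicing-and-moments step is the missing technical heart rather than a detail. Finally, note that the passage from Proposition~\ref{thm:intg:SingIntgBounds} to the statement as written also uses $|\wh\mu|\leq|\wh\mu|^{1-\eps}$ and $|J|\lesssim|J|^{1-\eps}$, which your sketch does not need but which is worth recording if you rebuild the proof along the paper's lines.
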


Recalling Definition~\ref{thm:scheme:OpMeasures}, 
we see that $\Lambda(\mu,\dots,\mu)$ is well-defined
under the conditions~\eqref{eq:scheme:DimensionConditions}.
In practice, we will need slight variants of 
Proposition~\ref{thm:scheme:SingIntgBound},
which are discussed in Section~\ref{sec:intg}.
In the same section, 
we obtain singular integral bounds for bounded functions of compact support.

\begin{proposition}
\label{thm:scheme:OpFourierControl}
Suppose that $m > (k-1)n$.
Then there exists $\eps \in (0,1)$
depending at most on $n,k,m$
such that the following holds.
For functions $f_0,\dots,f_k \in \bump{n}$ 
with support in $\cube{2}$,
\begin{align*}
	|\Lambda(f_0,\dots,f_k)|
	\lesssim \prod_{0 \leq j \leq k} \| \wh{f}_j \|_{\infty}^\eps \cdot \| f_j \|_\infty^{1-\eps}.
\end{align*}
\end{proposition}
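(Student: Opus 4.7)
The plan is to use the Fourier-analytic representation from Proposition~\ref{thm:scheme:FourierInv},
\begin{align*}
	\Lambda(f_0,\dots,f_k)
	= \int_{(\R^n)^k} \wh{f}_0(-\xi_1 - \dots - \xi_k) \prod_{j=1}^k \wh{f}_j(\xi_j)\, J(\bfxi)\, \dbfxi,
\end{align*}
together with pointwise decay of the oscillatory kernel $J$ and a split-and-interpolate step in the frequency variables $\bfxi \in \R^{kn}$. The basic interpolation input is elementary: since each $f_j$ is supported in a fixed unit cube, we have $\|\wh{f}_j\|_\infty \leq \|f_j\|_1 \lesssim \|f_j\|_\infty$ and $\|\wh{f}_j\|_2 = \|f_j\|_2 \lesssim \|f_j\|_\infty$, so bounds of the form $\|\wh{f}_j\|_\infty$ and $\|f_j\|_\infty$ correspond naturally to $L^\infty$- and $L^2$-control on the Fourier side.

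The first task is to establish decay of $J$ in the phase-relevant directions. Writing the phase of $J$ as $L \cdot y + \tau\, Q(y)$ with $L = \bfA^\transp \bfxi \in \R^m$ and $\tau = \xi_{k,n}$ (the last coordinate of $\xi_k$), stationary phase against the non-degenerate Hessian of $Q$ at zero yields decay of order $|\tau|^{-m/2}$ when $|L/\tau|$ is bounded, while non-stationary phase in $y$ gives arbitrarily fast polynomial decay in $|L|$ when $|L|$ dominates. Since $J$ depends only on the $m+1$ linear combinations $(L,\tau)$, it is constant on a subspace of $\R^{kn}$ of dimension $kn - m - 1 < n - 1$ (using $m > (k-1)n$); these ``free'' directions will be handled by the Schwartz decay of some factor $\wh{f}_j$.

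For the interpolation step, fix a parameter $R \geq 1$ and decompose the integral according to whether $|(L,\tau)| \leq R$ or $> R$. On the low region, bound each $\wh{f}_j$ pointwise by $\|\wh{f}_j\|_\infty$ and integrate $|J|$, obtaining an estimate of the form $R^a \prod_j \|\wh{f}_j\|_\infty$ for some $a \geq 0$. On the high region, apply Cauchy--Schwarz in $\bfxi$; controlling $\int |\wh{f}_0(-\sum \xi_j)|^2 |\wh{f}_k(\xi_k)|^2\, d\xi_k$ by $\|\wh{f}_0\|_\infty^2 \|f_k\|_2^2$ (and analogously for the other factors via Plancherel and compact support) while extracting an $R^{-b}$ decay from the $J$-tails, yields an estimate of the form $R^{-b} \|\wh{f}_0\|_\infty \prod_{j \geq 1} \|f_j\|_\infty$. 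Balancing $R^a$ against $R^{-b}$ and then symmetrizing the exponents across factors via $\|\wh{f}_j\|_\infty \lesssim \|f_j\|_\infty$ produces the claimed bound with some $\eps \in (0,1)$ depending only on $n,k,m$.

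The main technical obstacle will be making the tail estimate quantitative: stationary-phase decay of order $|\tau|^{-m/2}$ is marginal for $L^2$-integrability on the phase-relevant space $\R^{m+1}$, so one has to combine the decay in $\tau$ with the transverse decay in $L$ (from non-stationary phase) and, if necessary, borrow additional Schwartz decay from the factors $\wh{f}_j$ to obtain a genuine $R^{-b}$ on the high region. The role of the assumption $m > (k-1)n$, together with the non-degeneracy of the matrix system $(A_i)$, is precisely to ensure that enough excess oscillatory dimensions are available for this argument to close.
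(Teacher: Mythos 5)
Your reduction to the Fourier side, the pointwise decay $|J(\bfxi)|\lesssim(1+|\bfA^\transp\bfxi|)^{-m/2}$ from the curvature of $Q$, and the observation that $\|\wh f_j\|_2=\|f_j\|_2\lesssim\|f_j\|_\infty$ by compact support all match the paper. But the heart of the proof is missing, and the steps you propose in its place do not close. The kernel $J$ is constant along $\ker(\bfA^\transp)$, a subspace of $(\R^n)^k$ of dimension at least $kn-m>0$, so your low-frequency estimate "bound each $\wh f_j$ by $\|\wh f_j\|_\infty$ and integrate $|J|$" produces a divergent integral: the region $\{|(L,\tau)|\leq R\}$ is an unbounded cylinder. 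Your escape hatch --- ``borrow Schwartz decay from some factor $\wh f_j$'' --- is not available, because the claimed bound must depend only on $\|f_j\|_\infty$ and $\|\wh f_j\|_\infty$, uniformly over all $f_j\in\bump{n}$ supported in the fixed cube; Schwartz decay of $\wh f_j$ costs derivative seminorms of $f_j$. On the high region, a single Cauchy--Schwarz in $\bfxi$ groups the $k+1$ factors into two blocks, and the resulting squared integrals require $|J|$ to be integrable on slices of codimension roughly $2n$, which forces conditions like $m/2>n$ rather than the stated $m>(k-1)n$.

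What actually makes the argument work in the paper (Propositions~\ref{thm:intg:IntermIntgBound}, \ref{thm:intg:FiberCdt} and Corollary~\ref{thm:intg:SystEqs}) is a genuinely multilinear interpolation: split $\prod_j F_j = \prod_j F_j^{\tau}\cdot\prod_j F_j^{1-\tau}$ and apply H\"older with exponents $(p,p')$ so that the first factor is handled \emph{without} the kernel, by exploiting the relation $\xi_0=-(\xi_1+\dotsb+\xi_k)$ to split the diagonal integral into products of one-variable integrals, while the second factor is handled by integrating $K^{p'}$ over the codimension-$n$ slices $\{\xi_j=\eta\}$. The finiteness of those fiber moments is exactly where the non-degeneracy of $(A_1,\dots,A_k)$ enters: it guarantees $\bfA^\transp$ is injective on each $\{\xi_j=0\}$, so that $(1+|\bfA^\transp\bfxi|)^{-q}$ is integrable on the $(k-1)n$-dimensional slice precisely when $q>(k-1)n$; tuning $(\tau,p)$ as in Corollary~\ref{thm:intg:SystEqs} with $s=2/(1-\eps)$ then yields the condition $m>(k-1)n$ for small $\eps$. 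Your sketch never identifies this mechanism (the matrices $(A_i)$ appear only in a closing aside), and without it there is no way to reach the stated range of $m$. The $\eps$/$(1-\eps)$ split itself is obtained much more simply than by your $R$-balancing: one just writes $|\wh f_j|\leq\|\wh f_j\|_\infty^{\eps}|\wh f_j|^{1-\eps}$ pointwise before estimating the remaining singular integral in $L^{2/(1-\eps)}$.
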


In Section~\ref{sec:config}, 
we construct a measure detecting polynomial configurations,
by exploiting the finiteness of the singular integral
in~\eqref{eq:scheme:SingIntgBound}
and the uniform decay of the fractal measure.

\begin{proposition}
\label{thm:scheme:ConfigMeasure}
Let $\beta_0 \in (0,n)$ and suppose 
that~\eqref{eq:scheme:DimensionConditions} holds.
Then there exists a measure $\nu \in \meas{n+m}$
such that
\begin{itemize}
	\item	$\| \nu \| = \Lambda( \mu, \dots, \mu )$,
	\item	$\nu$ is supported on the set of $(x,y) \in \R^n \times \Omega$ such that: 
			\newline
			$( x , x + \varphi_1(y), \dots, x + \varphi_k(y)) \in E^{k+1}$,
	\item	$\nu(H) = 0$ for every hyperplane $H < \R^{n + m}$.
\end{itemize}
\end{proposition}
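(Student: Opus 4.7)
The construction is by mollification and weak-$\ast$ limit. Let $\phi \in \cutoff{n}$ be a nonnegative radial bump with $\int \phi = 1$, set $\phi_\eps(x) = \eps^{-n}\phi(x/\eps)$, and let $\mu_\eps = \mu \ast \phi_\eps$. Define the approximate configuration measures
\[
	d\nu_\eps(x,y) = \mu_\eps(x)\prod_{j=1}^{k}\mu_\eps(x+\varphi_j(y))\,\psi(y)\,dx\,dy,
\]
which are finite positive measures all supported in a fixed compact set $K \subset \R^{n+m}$ depending only on $E$ and $\operatorname{supp}\psi$.

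By Definition~\ref{thm:scheme:ConfigOp} and Proposition~\ref{thm:scheme:FourierInv}, the total mass is $\|\nu_\eps\| = \Lambda(\mu_\eps,\dots,\mu_\eps) = \Lambda^*(\wh{\mu_\eps},\dots,\wh{\mu_\eps};J)$. Since $|\wh{\mu_\eps}(\xi)| \leq |\wh\mu(\xi)|$ pointwise and Proposition~\ref{thm:scheme:SingIntgBound} provides the integrable envelope $\Lambda^*(|\wh\mu|,\dots,|\wh\mu|;|J|) < \infty$, dominated convergence yields $\|\nu_\eps\| \to \Lambda(\mu,\dots,\mu)$. Banach-Alaoglu in $C(K)^*$ then gives a weakly-$\ast$ convergent subsequence with limit $\nu \in \meas{n+m}$, and the uniform compactness of the supports ensures $\|\nu\| = \Lambda(\mu,\dots,\mu)$. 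For the support claim, each $\nu_\eps$ lives in the closed set $\{(x,y) : x,\,x+\varphi_j(y) \in E_\eps\text{ for every }j,\ y \in \operatorname{supp}\psi\}$, where $E_\eps$ denotes the $\eps$-neighborhood of $E$; by continuity of the $\varphi_j$ and closedness of $E$, the intersection of these sets over $\eps > 0$ is exactly the configuration set in the second bullet, and $\operatorname{supp}\nu$ lies in it.

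The hyperplane property is the main technical obstacle. Writing $H = \{(x,y) : a\cdot x + b\cdot y = 0\}$, by outer regularity of $\nu$ it suffices to show $\nu(H_\delta) \to 0$ as $\delta \to 0$ for the $\delta$-neighborhood, which in turn follows from a uniform bound $\nu_\eps(H_{2\delta}) \lesssim \delta^\tau$ for some $\tau > 0$. Freezing $y$ and applying the Frostman slab bound $\mu(\{x : |a\cdot x - \tilde c| < \delta\}) \lesssim \delta^{\alpha - n + 1}$ (meaningful since $\alpha > n-1$) fails because $\|\mu_\eps\|_\infty$ blows up as $\eps \to 0$. Instead I would smoothly cut off $H_\delta$ by $\chi(\delta^{-1}(a\cdot x + b\cdot y))$ with $\chi \in \cutoff{1}$ and Fourier-expand in the single variable $\ell(x,y) = a\cdot x + b\cdot y$, yielding
\[
	\nu_\eps(H_\delta) \;\lesssim\; \delta\int |\wh\chi(\delta s)|\,|\wh{\nu_\eps}(-sa,-sb)|\,ds.
\]
The twisted Fourier transform $\wh{\nu_\eps}(-sa,-sb)$ is itself a configuration form in the sense of Definition~\ref{thm:scheme:ConfigOp}, namely $\Lambda(f_0,\mu_\eps,\dots,\mu_\eps)$ with $f_0(x) = \mu_\eps(x)e(-sa\cdot x)$ and the cutoff $\psi$ replaced by $\psi(y)e(-sb\cdot y)$; a variant of Proposition~\ref{thm:scheme:SingIntgBound} applied to this twisted form should yield uniform decay $|\wh{\nu_\eps}(-sa,-sb)| \lesssim (1+|s|)^{-\tau'}$ for some $\tau' > 0$, the non-degeneracy of the matrices $A_i$ ensuring that the oscillation in direction $(a,b)$ is transmitted through enough factors. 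Integration against $\wh\chi(\delta s)$ then produces the required positive power of $\delta$, and letting $\eps \to 0$ and $\delta \to 0$ concludes the proof.
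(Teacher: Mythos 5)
Your construction of $\nu$ (mollify, use the dominating integrand $\Lambda^*(|\wh{\mu}|,\dots,|\wh{\mu}|;|J|)<\infty$ from Proposition~\ref{thm:scheme:SingIntgBound} to pass to the limit of the masses, extract a weak-$\ast$ limit, and localize the supports in $\eps$-neighbourhoods of the configuration set) is correct and essentially the paper's argument for the first two bullets; the paper defines $\langle\nu,F\rangle=\lim_\eps\Lambda(\mu_\eps,\dots,\mu_\eps;F)$ directly via dominated convergence on the Fourier side, which avoids passing to a subsequence, but this is a cosmetic difference.

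The gap is in the hyperplane step, which you correctly identify as the crux but then leave unproved. Your reduction to a decay estimate $|\wh{\nu_\eps}(-sa,-sb)|\lesssim(1+|s|)^{-\tau'}$ uniform in $\eps$ is the right one (it is equivalent to the paper's: the paper tests $\nu$ against $F_\delta$ whose Fourier transform is concentrated on a $\delta^{-1}$-long tube in the normal direction, and needs exactly the decay of $\wh{\nu}(\kappa,\theta)$ in $|(\kappa,\theta)|$). But the mechanism you invoke for it is not correct as stated: a ``variant of Proposition~\ref{thm:scheme:SingIntgBound} applied to the twisted form'' only gives \emph{uniform boundedness} in $s$ (this is the content of the shifted bound $\sup_{\kappa,\theta}\Lambda^*(T^\kappa|\wh\mu|^{1-\eps},\dots;|J_\theta|^{1-\eps})<\infty$ in Proposition~\ref{thm:intg:SingIntgBounds}), and no amount of matrix non-degeneracy converts boundedness into decay. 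The decay must come from the \emph{pointwise, uniform} Fourier decay hypothesis~\eqref{eq:scheme:FourierDecay} on $\mu$ together with the stationary-phase decay~\eqref{eq:intg:OscIntgDecay} of $J_\theta$: one shows that the integrand $|\wh\mu(\kappa+\xi_1+\dots+\xi_k)|\prod_j|\wh\mu(\xi_j)|\,|J_\theta(\bfxi)|$ is $\lesssim_\alpha(1+|\kappa|+|\theta|)^{-\beta/2}$ for \emph{every} $\bfxi$, by writing $\kappa=(\kappa+\sum\xi_j)-\sum\xi_j$ and $\theta=(\bfA^\transp\bfxi+\theta)-\sum A_j^\transp\xi_j$, so that at least one factor sees a large frequency. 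One then splits the integrand into an $\eps$-th power (which supplies the decay $(1+|(\kappa,\theta)|)^{-\eps\beta/2}$) and a $(1-\eps)$-th power (whose integral is the uniformly bounded shifted singular integral above). Without this splitting your claimed bound on $\wh{\nu_\eps}$ along the normal line is unsubstantiated, and the proof of the third bullet is incomplete. Note also that this is precisely the point where the uniform Fourier decay of $\mu$, as opposed to a mere moment bound on $\wh\mu$, is indispensable.
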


In Section~\ref{sec:abs}, we show how to
obtain a positive mass of polynomial configurations
in sets of positive density, 
through the singular integral bound of 
Proposition~\ref{thm:scheme:OpFourierControl}
and the arithmetic regularity lemma from additive combinatorics.

\begin{proposition}
\label{thm:scheme:AbsContEst}
Suppose that $m > (k-1)n$.
Then, uniformly for every function
$f \in \bump{n}$ such that
$\Supp f \subset { \cube{8} }$, 
$0 \leq f \leq 1$ and $\int f = \tau \in (0,1]$,
we have
\begin{align*}
	\Lambda(f,\dots,f) \gtrsim_\tau 1.
\end{align*}
\end{proposition}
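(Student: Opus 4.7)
The plan is to use a structure-versus-uniformity decomposition in the spirit of the arithmetic regularity lemma. Viewing $f$ as a function on $\T^n$ (valid since $\Supp f \subset \cube{8}$), we produce, for a parameter $\eta > 0$ to be chosen later, a decomposition
\[
	f = f_{\mathrm{str}} + f_{\mathrm{sml}} + f_{\mathrm{unif}},
\]
where $f_{\mathrm{str}}$ is a non-negative Lipschitz trigonometric polynomial with Lipschitz constant $L(\eta) = O(\eta^{-O(1)})$ satisfying $\int f_{\mathrm{str}} = \tau + O(\eta)$, $\|f_{\mathrm{sml}}\|_2 \leq \eta$, and $\|\wh{f_{\mathrm{unif}}}\|_\infty \leq \eta$. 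After smooth truncation to $\cube{2}$, all three pieces lie in $\bump{n}$ with $L^\infty$ norms of order $O(1)$.

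Expanding $\Lambda(f,\dots,f)$ by multilinearity produces the main term $\Lambda(f_{\mathrm{str}},\dots,f_{\mathrm{str}})$ and $3^{k+1}-1$ mixed terms. Each mixed term containing an $f_{\mathrm{unif}}$ factor is $O(\eta^\eps)$ by Proposition~\ref{thm:scheme:OpFourierControl}, using $\|\wh{f_{\mathrm{unif}}}\|_\infty \leq \eta$ together with uniform $L^\infty$ bounds on the remaining factors. Each mixed term containing an $f_{\mathrm{sml}}$ factor but no $f_{\mathrm{unif}}$ factor admits the trivial $O(\|f_{\mathrm{sml}}\|_1) = O(\eta)$ bound obtained by pulling the $f_{\mathrm{sml}}$ factor out in $L^1$ against the bounded remaining factors.

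To lower bound the main term, let $\delta > 0$ be chosen small enough that $L(\eta) \delta \leq \tau/(10(k+1))$ and $\delta \leq c$, where $c$ is the constant on which $\psi \geq 1$. By the Lipschitz continuity of $f_{\mathrm{str}}$ and the smoothness of each $\varphi_j$ with $\varphi_j(0)=0$, we have $|f_{\mathrm{str}}(x+\varphi_j(y)) - f_{\mathrm{str}}(x)| \leq \tau/(10(k+1))$ for all $|y| \leq \delta$ and all $j$. Since $\int f_{\mathrm{str}} \gtrsim \tau$ and $f_{\mathrm{str}} \leq 1$ force $|\{f_{\mathrm{str}} \geq \tau/5\}| \gtrsim \tau$, we conclude, by restricting the integral to $|y| \leq \delta$ and to $x$ in this superlevel set, that
\[
	\Lambda(f_{\mathrm{str}},\dots,f_{\mathrm{str}}) \gtrsim \delta^m \cdot (\tau/10)^{k+1} \cdot \tau \gtrsim \delta^m \tau^{k+2}.
\]

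Finally, choosing $\eta = \eta(\tau) > 0$ sufficiently small so that $\eta^\eps + \eta \ll \delta^m \tau^{k+2}$ closes the argument, which is possible because $\delta$ depends polynomially on $\eta$ and $\tau$. The principal difficulty is striking this balance: the Lipschitz scale $\delta$ shrinks polynomially in $\eta$, so the Fourier gain $\eta^\eps$ from Proposition~\ref{thm:scheme:OpFourierControl} must be strong enough to compensate, which the quantitative bounds in the regularity lemma guarantee.
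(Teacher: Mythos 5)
Your overall architecture (structure/small/uniform decomposition, Fourier control of the uniform part via Proposition~\ref{thm:scheme:OpFourierControl}, positivity of the main term) matches the paper's, but there are two genuine gaps that make the argument fail as written. First, the decomposition you posit does not exist: you cannot in general arrange simultaneously that $f_{\mathrm{str}}$ is Euclidean-Lipschitz with constant $L(\eta)=\eta^{-O(1)}$, that $\|f_{\mathrm{sml}}\|_2\leq\eta$, and that $\|\wh{f_{\mathrm{unif}}}\|_\infty\leq\eta$. A function with a single Fourier coefficient of size $\tfrac12$ at an enormous frequency $\xi_0$ defeats any such one-shot decomposition: that mode is too large to be placed in $f_{\mathrm{unif}}$ or $f_{\mathrm{sml}}$, yet including it in $f_{\mathrm{str}}$ destroys any polynomial Lipschitz bound. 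What the $U^2$ regularity lemma (Proposition~\ref{thm:reg:RegLemma}) actually delivers, after an energy-increment iteration, is a structured part $f_1=f\ast\nu_B$ that is almost-periodic only along a Bohr set $B$ whose frequency set contains the large spectrum of $f$; such a $B$ need not contain any Euclidean ball around the origin. Consequently your main-term lower bound, which restricts to $|y|\leq\delta$ and uses Euclidean Lipschitz continuity, has no analogue: one must instead restrict to $\{y\in[-c,c]^m : \varphi_1(y),\dots,\varphi_k(y)\in B\}$ and prove that this set has measure $\gtrsim_{d,\delta}1$. For the polynomial shifts $\varphi_j$ this is a nontrivial simultaneous diophantine approximation (Weyl-type) statement, and it is the content of Propositions~\ref{thm:abs:LowBoundBohrPolVecs}--\ref{thm:abs:LowBoundBohrZ}; it is entirely absent from your proposal.

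Second, even granting your decomposition, the final balance does not close. The error from the uniform part is $O(\eta^{\eps})$ with $\eps$ a fixed small constant depending only on $n,k,m$, while your main term is $\gtrsim\delta^m\tau^{k+2}$ with $\delta\sim\tau/L(\eta)=\tau\,\eta^{O(1)}$; the required inequality $\eta^{\eps}\ll\tau^{m+k+2}\eta^{Cm}$ fails for small $\eta$ since $\eps<1\leq Cm$. This is not a technicality: it is precisely why the regularity lemma must allow the Fourier-uniformity threshold to be an \emph{arbitrary} decay function $\kappa(\eps,d^{-1},\delta)$ of the complexity of the structured part (the paper takes $\kappa(\eps,d^{-1},\delta)=c'\bigl(c(\delta,d^{-1})\eps\bigr)^{1/\eps'}$), rather than being polynomially tied to it. A one-parameter polynomial trade-off of the kind you describe cannot produce this, which is why the iterative construction in Appendix~\ref{sec:reg} is needed.
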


In Section~\ref{sec:transf},
we show how to obtain a positive mass of configurations
by a transference argument, by which the fractal measure $\mu$
is replaced by a mollified version of itself which is absolutely
continuous with bounded density, allowing us to invoke 
Proposition~\ref{thm:scheme:AbsContEst}.

\begin{proposition}
\label{thm:scheme:Transference}
Let $\beta_0 \in (0,n)$ and	
suppose that 
\begin{align*}
	(k-1)n < m < kn,
	\quad
	\beta_0 \leq \beta < n,
	\quad
	n - c(\beta_0) \leq \alpha < n,	
\end{align*}
for a sufficiently small constant $c(\beta_0) > 0$.
Then
\begin{align*}
	\Lambda( \mu, \dots, \mu ) > 0 .
\end{align*}
\end{proposition}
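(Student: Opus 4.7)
My plan is a transference argument in the spirit of \L{}aba--Pramanik and Chan--\L{}aba--Pramanik. At a Fourier scale $\delta > 0$ to be chosen, I would split $\mu = \mu_1 + \mu_2$ into a structured piece $\mu_1$ (absolutely continuous, with controlled density) and a pseudorandom piece $\mu_2$ with small Fourier transform. Expanding $\Lambda(\mu,\dots,\mu)$ multilinearly produces one main term $\Lambda(\mu_1,\dots,\mu_1)$ together with $2^{k+1}-1$ mixed error terms, each carrying at least one factor of $\mu_2$. The main term is bounded from below by applying Proposition~\ref{thm:scheme:AbsContEst} to a renormalization of $\mu_1$, while the mixed terms are bounded from above via Proposition~\ref{thm:scheme:SingIntgBound} (and the variants promised in Section~\ref{sec:intg}) combined with the Fourier smallness of $\mu_2$.

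Concretely, I would pick a non-negative Schwartz mollifier $\psi_\delta$ at scale $\delta$ whose Fourier transform is supported in $\{|\xi| \leq 1/\delta\}$ and equals $1$ at $\xi=0$, and set $\mu_1 = \mu * \psi_\delta$, $\mu_2 = \mu - \mu_1$. A dyadic annular estimate pairing the fast decay of $\psi_\delta$ with the Frostman bound~\eqref{eq:scheme:BallDecay} yields $\|\mu_1\|_\infty \lesssim D\delta^{\alpha-n}$, and the Schwartz decay of $\psi_\delta$ together with $E \subset \cube{16}$ permits a truncation of $\mu_1$ to a non-negative function in $\bump{n}$ supported in $\cube{8}$ at a cost smaller than any power of $\delta$. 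Setting $f = \mu_1/\|\mu_1\|_\infty$ then produces a function satisfying the hypotheses of Proposition~\ref{thm:scheme:AbsContEst} with
\begin{align*}
	\tau := \int f = \frac{1}{\|\mu_1\|_\infty} \gtrsim \delta^{n-\alpha},
\end{align*}
and therefore $\Lambda(\mu_1,\dots,\mu_1) = \|\mu_1\|_\infty^{k+1}\, \Lambda(f,\dots,f) \geq c(\tau)\, \|\mu_1\|_\infty^{k+1}$ for some positive $c(\tau)$.

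For the mixed error terms, Definition~\ref{thm:scheme:OpMeasures} rewrites each one as a singular integral against $|J|$ containing at least one factor of $|\wh{\mu_2}|$. Combining the pointwise bounds $|\wh{\mu_2}| \leq |\wh{\mu}|$ and $|\wh{\mu_2}(\xi)| \lesssim D_\alpha \delta^{\beta/2}$ (valid on the support of $\wh{\mu_2}$ via~\eqref{eq:scheme:FourierDecay}), interpolation yields, for any $\theta \in (0,1)$,
\begin{align*}
	|\wh{\mu_2}(\xi)| \lesssim (D_\alpha \delta^{\beta/2})^{\theta}\, |\wh{\mu}(\xi)|^{1-\theta}.
\end{align*}
Inserted into~\eqref{eq:scheme:SingIntg}, this bounds each mixed term by $C (D_\alpha \delta^{\beta/2})^\theta$ times a variant of the integral of Proposition~\ref{thm:scheme:SingIntgBound} in which one factor of $|\wh{\mu}|$ is replaced by $|\wh{\mu}|^{1-\theta}$. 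The variants alluded to in Section~\ref{sec:intg} should show this remains finite of order $(n-\alpha)^{-O(1)}$ under~\eqref{eq:scheme:DimensionConditions}, so combining with~\eqref{eq:scheme:Dgrowth} the total error is at most $C \delta^{\theta\beta/2}(n-\alpha)^{-O(1)}$.

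It remains to balance $\delta$ against $n-\alpha$. Since $\|\mu_1\|_\infty^{k+1} \gtrsim \delta^{(k+1)(\alpha-n)}$ supplies a favourable negative power of $\delta$ to the main term, the main term beats the error provided $n-\alpha$ is small enough relative to $\beta_0$ and to the dependence of $c(\tau)$ on $\tau$. The principal obstacle lies in precisely this dependence: the constant $c(\tau)$ obtained in Section~\ref{sec:abs} from an arithmetic regularity lemma typically degrades much faster than polynomially as $\tau \to 0$, and in the worst case one is forced to keep $\tau$ bounded below by a fixed $\tau_0 > 0$. Via $\tau \gtrsim \delta^{n-\alpha}$ this requires $n-\alpha$ to be very small compared to $1/\log(1/\delta)$. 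The proof therefore fixes $\delta$ depending only on $\beta_0$ (and the constants implicit in Propositions~\ref{thm:scheme:SingIntgBound} and~\ref{thm:scheme:AbsContEst}) and then takes $\alpha$ sufficiently close to $n$ to absorb the remaining losses, producing the constant $c(\beta_0)$ of the statement.
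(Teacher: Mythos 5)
Your overall architecture (mollify, split $\mu=\mu_1+\mu_2$, lower-bound the main term via Proposition~\ref{thm:scheme:AbsContEst}, upper-bound the $2^{k+1}-1$ mixed terms via the singular integral bounds and the Fourier smallness of $\mu_2$) is exactly the paper's, and most of the intermediate estimates are right. But the final balancing step, where you ``fix $\delta$ depending only on $\beta_0$ and then take $\alpha$ sufficiently close to $n$,'' does not close. With $\delta$ fixed, your error bound is $C\,\delta^{\theta\beta_0/2}\,(n-\alpha)^{-O(1)}$, where the polynomial blow-up in $(n-\alpha)^{-1}$ is unavoidable: it comes both from $D_\alpha\lesssim(n-\alpha)^{-O(1)}$ in \eqref{eq:scheme:Dgrowth} and from the singular integral bound of Proposition~\ref{thm:intg:SingIntgBounds} (ultimately from the restriction estimate $\|\wh{\mu}\|_{2+\delta}\lesssim D_\alpha^{1/2}$). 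Letting $\alpha\to n$ with $\delta$ fixed therefore makes the error term \emph{diverge}, while the main term stays bounded; there are no ``remaining losses to absorb'' in that limit, only new ones. You in fact identify the correct constraint yourself --- $\tau\gtrsim\delta^{\,n-\alpha}$ forces $(n-\alpha)\log(1/\delta)\lesssim 1$, i.e.\ $\delta\geq e^{-C/(n-\alpha)}$ --- but then abandon it.

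The resolution (Proposition~\ref{thm:transf:MeasDcp}) is to couple the mollification scale to $n-\alpha$ \emph{exponentially}: take $L=e^{1/(n-\alpha)}$, i.e.\ $\delta=e^{-1/(n-\alpha)}$. Then $\|\mu_1\|_\infty\leq C_0DL^{n-\alpha}=C_0De$ is a fixed constant, so $\tau$ is bounded below independently of $\alpha$ and the main term is a fixed $c>0$; and $\|\wh{\mu}_2\|_\infty\lesssim(n-\alpha)^{-O(1)}e^{-\frac{\beta}{2+\beta}\cdot\frac{1}{n-\alpha}}$, whose exponential decay beats every polynomial factor $(n-\alpha)^{-O(1)}$ in the error, which therefore tends to $0$ as $\alpha\to n$. (Note also that the exponent is $\beta/(2+\beta)$ rather than your $\beta/2$: with a compactly supported physical-space mollifier one only has $|1-\wh{\phi}(\xi/L)|\lesssim\min(1,|\xi|/L)$, not vanishing for $|\xi|\leq 1/\delta$, and optimizing this against the decay $(1+|\xi|)^{-\beta/2}$ over the two ranges $|\xi|\lessgtr L^{2/(2+\beta)}$ gives $L^{-\beta/(2+\beta)}$. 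This only changes constants, but your support claim for $\wh{\psi}_\delta$ as stated conflicts with keeping $\mu_1$ compactly supported near $E$.) With the exponential choice of scale the rest of your argument goes through as written.
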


At this stage we have stated all the necessary ingredients to 
prove the main theorem.

\smallskip
\textit{Proof of Theorem~\ref{thm:intro:MainThm}.}
We may assume that $E \subset \cube{16}$
after a translation and dilation, which does
not affect the assumptions on $\mu,(A_i),Q$ except for the introduction
of constant factors in bounds.
By Proposition~\ref{thm:scheme:ConfigMeasure} ,
there exists a measure $\nu \in \meas{n + m}$ 
with mass $\Lambda(\mu,\dots,\mu)$ supported on
\begin{align*}
	X = \{ (x,y) \in \R^n \times \Omega \,:\,
	 (x, x + A_1 y, \dots, x + A_{k-1} y , x + A_k y + Q(y) e_n) \in E^{k+1} \},
\end{align*}
and such that $\nu(V_i) = 0$ for every collection of 
hyperplanes $V_1,\dots,V_q$ of $\R^{n+m}$.
We have therefore proven the result if we can show that
$\| \nu \| = \Lambda(\mu,\dots,\mu) > 0$, for then 
$\nu(X \smallsetminus (V_1 \cup \cdots \cup V_q)) > 0$
and the set $X \smallsetminus (V_1 \cup \cdots \cup V_q)$ cannot be empty.
We may apply Proposition~\ref{thm:scheme:Transference}
to obtain precisely this conclusion when $\alpha$ is close enough to $n$
with respect to $\beta_0$ (and the other implicit parameters $n,k,m,D,\bfA,Q$). 
\qed

\smallskip

To conclude this outline, we comment briefly
on the role that the Fourier decay hypothesis plays in our argument.
Using the restriction theory of fractals,
the assumption~\eqref{eq:scheme:FourierDecay}
is used together with the ball condition~\eqref{eq:scheme:BallDecay} in Appendix~\ref{sec:restr}
to deduce that $\| \wh{\mu} \|_{2+\eps} < \infty$ for an arbitrary $\eps > 0$, 
provided that $\alpha$ is close enough to $n$ (depending on $\eps$).
The Hausdorff dimension condition~\eqref{eq:scheme:BallDecay} alone does yield
information on the average Fourier decay of $\mu$, via the 
energy formula~\cite[Chapter~8]{Wolff:Book}, but this type of estimate
seems to be insufficient to establish the boundedness 
of the singular integrals we encounter.
Section~\ref{sec:intg} on singular integral bounds
and Section~\ref{sec:abs} on absolutely continuous estimates
only use the Fourier moment bound above.
On the other hand,
the estimation of degenerate configurations in Section~\ref{sec:config}
and the transference argument of Section~\ref{sec:transf}
exploit in an essential way the assumption of uniform Fourier decay.

\section{Counting operators and Fourier expressions}
\label{sec:ops}

In this section we describe the various types of
pattern-counting operators and singular integrals that
arise in trying to detect translation-invariant patterns
in the fractal set of the introduction.
First, we define an oscillatory integral which arises naturally
in the Fourier expression of the configuration form 
in Definition~\ref{thm:scheme:ConfigOp}.

\begin{definition}[Oscillatory integral]
\label{thm:ops:OscIntg}
For $\bfxi \in (\R^n)^k$ and $\theta \in \R^m$ we define
\begin{align*}
	J_\theta(\bfxi) = \int_{\R^m} e\big[ ( \theta + \bfA^\transp \bfxi) \cdot y + \xi_{kn} Q(y) \big] \psi(y) \dy,
	\qquad J = J_0.
\end{align*}
\end{definition}

We now derive the dual expression of the configuration form
announced in Section~\ref{sec:scheme}.

\smallskip
\textit{Proof of Proposition~\ref{thm:scheme:FourierInv}.}
By inserting the Fourier expansions of $f_1,\dots,f_k$ and by Fubini, we have
\begin{align*}
	\Lambda(f_0, \dots, f_k)
	&= \int_{\R^n} \int_{\R^m}
	f_0(x) f_1(x + \varphi_1(y)) \cdots f_k(x + \varphi_k(y)) 
	\dx \psi(y) \dy 
	\\
	&= \int_{(\R^n)^{k}} \wh{f}_1 ( \xi_1 ) \cdots \wh{f} ( \xi_k ) 
	\\
	& \phantom{\int_{(\R^n)^k}} 
	\int_{\R^n} f_0(x) e\big[ ( \xi_1 + \dotsb + \xi_k ) \cdot x \big] \dx 
	\\
	& \phantom{\int_{(\R^n)^k}} 
	\int_{\R^m} e\big[ \xi_1 \cdot \varphi_1(y) + \dotsb + \xi_k \cdot \varphi_k(y) \big] \psi(y) \dy 
	\ \dxi_1 \dots \dxi_k.
\end{align*}
Recalling Definition~\ref{thm:ops:OscIntg} and the choice~\eqref{eq:scheme:OnePolPattern}, 
we deduce that
\begin{align*}
	\Lambda(f_0, \dots, f_k)
	= \int_{(\R^n)^{k}} \wh{f}_0(-\xi_1-\dotsb-\xi_k) \wh{f}_1 (\xi_1) \cdots \wh{f}_k(\xi_k) J(\bfxi) 
	\dxi_1 \dots \dxi_k.
\end{align*}
\qed

We single out a useful bound for the configuration operator,
typically used when the $\lambda_i$ are either the measure $\mu$ 
or a mollified version of it.

\begin{proposition}
\label{thm:ops:OpBoundBySingIntg}
For measures $\lambda_0,\dots,\lambda_k \in \meas{n}$, we have
\begin{align*}
	|\Lambda(\lambda_0,\dots,\lambda_k)|
	\leq
	\prod_{j=0}^k \| \wh{\lambda}_j \|_\infty^\eps
	\cdot \Lambda^*\big(\, |\wh{\lambda}_0|^{1-\eps}, \dots, |\wh{\lambda}_k|^{1-\eps}; |J| \,),
\end{align*}
where the left-hand side is absolutely convergent if the right-hand side
is finite.
\end{proposition}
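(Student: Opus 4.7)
The plan is to derive the bound directly from Definition~\ref{thm:scheme:OpMeasures} via a one-step $L^\infty$-interpolation applied inside the dual integral; this will simultaneously give absolute convergence of the left-hand side whenever the right-hand side is finite. First, I would recall that for any $\lambda \in \meas{n}$ we automatically have $\wh{\lambda} \in \cont{n}$ with $\|\wh{\lambda}\|_\infty \leq \|\lambda\| < \infty$, so each factor $\|\wh{\lambda}_j\|_\infty^\eps$ appearing on the right-hand side is finite, and every integrand encountered below is a non-negative measurable function on $(\R^n)^k$ to which Tonelli's theorem applies.

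The substantive step is the trivial pointwise factorization
\begin{align*}
|\wh{\lambda}_j(\xi)|
= |\wh{\lambda}_j(\xi)|^\eps \cdot |\wh{\lambda}_j(\xi)|^{1-\eps}
\leq \|\wh{\lambda}_j\|_\infty^\eps \cdot |\wh{\lambda}_j(\xi)|^{1-\eps},
\end{align*}
valid for all $\xi \in \R^n$ and each $0 \leq j \leq k$. Plugging these $k+1$ pointwise bounds into the explicit formula~\eqref{eq:scheme:SingIntg} for $\Lambda^*$ and pulling the $L^\infty$-constants out of the integral yields
\begin{align*}
\Lambda^*(\, |\wh{\lambda}_0|, \dots, |\wh{\lambda}_k|; |J| \,)
\leq \prod_{j=0}^k \|\wh{\lambda}_j\|_\infty^\eps \cdot
\Lambda^*(\, |\wh{\lambda}_0|^{1-\eps}, \dots, |\wh{\lambda}_k|^{1-\eps}; |J| \,).
\end{align*}
If the right-hand side of the proposition is finite, then so is the left-hand side of this last display, which is exactly the absolute-convergence hypothesis of Definition~\ref{thm:scheme:OpMeasures}; hence $\Lambda(\lambda_0,\dots,\lambda_k)$ is well-defined. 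The stated bound then follows by combining the above with the triangle inequality $|\Lambda^*(\wh{\lambda}_0,\dots,\wh{\lambda}_k; J)| \leq \Lambda^*(|\wh{\lambda}_0|,\dots,|\wh{\lambda}_k|; |J|)$ applied inside the absolutely convergent dual integral.

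There is no substantive obstacle here: the proposition is essentially a bookkeeping statement that repackages the $\Lambda$-to-$\Lambda^*$ identity in a form suited to later applications of restriction-type estimates, where the $1-\eps$ power of $|\wh{\mu}|$ will be paired with an $L^p$ fractal restriction bound. The only minor care needed is to match the product over $j = 0, 1, \dots, k$ against the $k+1$ Fourier factors in the dual form, remembering that the $j = 0$ slot corresponds to the displaced argument $\wh{\lambda}_0(-\xi_1-\cdots-\xi_k)$, so no change of variable is required for the $L^\infty$-bound to pull out uniformly.
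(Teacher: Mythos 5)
Your proposal is correct and coincides with the paper's own proof: both rest on the pointwise factorization $|\wh{\lambda}_j| \leq \|\wh{\lambda}_j\|_\infty^\eps\, |\wh{\lambda}_j|^{1-\eps}$ applied inside the dual integral, together with the triangle inequality and Definition~\ref{thm:scheme:OpMeasures}. The extra remarks on finiteness of $\|\wh{\lambda}_j\|_\infty$ and on how finiteness of the right-hand side feeds back into the definition of $\Lambda$ for measures are accurate but routine.
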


\begin{proof}
This follows from Definition~\ref{thm:scheme:OpMeasures}
and the successive bounds
\begin{align*}
	&\phantom{=.} |\Lambda^*(\wh{\lambda}_0,\dots,\wh{\lambda}_k ; J)|
	\\
	&\leq \int_{(\R^n)^k} |\wh{\lambda}_0(\xi_1 + \dotsb + \xi_k)| 
	|\wh{\lambda}_1(\xi_1)| \cdots |\wh{\lambda}_k(\xi_k)| |J(\bfxi)| \dbfxi
	\\
	&\leq \prod_{j=0}^k  \| \wh{\lambda}_j \|_\infty^\eps \cdot
	\int_{(\R^n)^k} |\wh{\lambda}_0(\xi_1 + \dotsb + \xi_k)|^{1-\eps}
	|\wh{\lambda}_1(\xi_1)|^{1-\eps} \cdots |\wh{\lambda}_k(\xi_k)|^{1-\eps} |J(\bfxi)| \dbfxi.
\end{align*}
\end{proof}

In some instances we will need 
a slightly more general multilinear form, as follows.

\begin{definition}[Smoothed configuration form]
\label{thm:ops:ConfigOpSmooth}
For functions $f_0,\dots,f_k \in \schw{n}$ and $F \in \schw{n+m}$, let
\begin{align}
\label{eq:ops:ConfigOpSmooth}
	\Lambda(f_0,\dots,f_k;F)
	= \int_{\R^n} \int_{\R^m}
	F(x,y) f_0(x) f_1(x + \varphi_1(y)) \cdots f_k(x + \varphi_k(y)) \dx\, \psi(y) \dy.
\end{align}
\end{definition}

\begin{proposition}
\label{thm:ops:FourierInvOpSmooth}
For functions $f_0,\dots,f_k \in \schw{n}$ and $F \in \schw{n+m}$, we have
\begin{align*}
	&\phantom{= .} \Lambda(f_0,\dots,f_k;F)
	&= \int_{\R^n \times \R^m} \wh{F}(\kappa,\theta)
	\int_{(\R^n)^k} \wh{f}_0( - \kappa - \xi_1 - \dotsb - \xi_k) 
	\prod_{j=1}^k \wh{f}_j (\xi_j) J_\theta(\bfxi) 
	\dbfxi \dkappa \dtheta.
\end{align*}
\end{proposition}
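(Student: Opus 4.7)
\textit{Proof plan.} The statement is the natural extension of Proposition~\ref{thm:scheme:FourierInv} to the smoothed configuration form, with one extra Fourier inversion applied to $F$. My plan is to imitate the earlier proof and then handle the additional oscillatory factor contributed by $F$.

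First, I would start from Definition~\ref{thm:ops:ConfigOpSmooth} and insert the Fourier inversion formula $F(x,y) = \int_{\R^n \times \R^m} \wh{F}(\kappa,\theta) e(\kappa \cdot x + \theta \cdot y) \dkappa \dtheta$, together with the Fourier expansions $f_j(z) = \int_{\R^n} \wh{f}_j(\xi_j) e(\xi_j \cdot z) \dxi_j$ for $j=1,\dots,k$. Since $F$ and each $f_j$ are Schwartz, all of the Fourier integrands in the resulting iterated integral decay rapidly, so Fubini's theorem applies and we may freely rearrange the order of integration to group together the $x$- and $y$-variables.

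Next, I would carry out the $x$-integration: collecting the exponentials in $x$ gives
\begin{align*}
\int_{\R^n} f_0(x) \, e\big[ ( \kappa + \xi_1 + \dotsb + \xi_k ) \cdot x \big] \dx = \wh{f}_0\big( -\kappa - \xi_1 - \dotsb - \xi_k \big),
\end{align*}
which produces the correct argument of $\wh{f}_0$ in the claimed identity. For the $y$-integration, I would use the specific form~\eqref{eq:scheme:OnePolPattern} of the shift functions to write $\xi_j \cdot \varphi_j(y) = (A_j^\transp \xi_j) \cdot y$ for $j<k$ and $\xi_k \cdot \varphi_k(y) = (A_k^\transp \xi_k) \cdot y + \xi_{kn} Q(y)$. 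Combining these with the factor $e(\theta \cdot y)$ coming from $\wh{F}$, the remaining integral becomes
\begin{align*}
\int_{\R^m} \psi(y) \, e\big[ (\theta + \bfA^\transp \bfxi) \cdot y + \xi_{kn} Q(y) \big] \dy = J_\theta(\bfxi),
\end{align*}
by Definition~\ref{thm:ops:OscIntg}. Reassembling gives the stated identity.

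I do not expect a genuine obstacle here: the argument is essentially an application of Fourier inversion and Fubini, and the identity used in the proof of Proposition~\ref{thm:scheme:FourierInv} is recovered in the special case $F \equiv 1$ (formally, $\wh{F} = \delta_{(\kappa,\theta)=(0,0)}$). The only mildly delicate point is the Fubini justification for the five-fold integral; this is handled by the rapid decay of $\wh{F}$, $\wh{f}_j$ and $\psi$, together with the trivial bound $|J_\theta(\bfxi)| \leq \|\psi\|_1$.
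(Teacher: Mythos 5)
Your proposal is correct and follows essentially the same route as the paper's proof: insert the Fourier expansions of $F, f_1,\dots,f_k$, apply Fubini, perform the $x$-integration to produce $\wh{f}_0(-\kappa-\xi_1-\dotsb-\xi_k)$, and recognize the remaining $y$-integral as $J_\theta(\bfxi)$. The extra remarks on the Fubini justification and the special case $F\equiv 1$ are fine but not needed beyond what the paper records.
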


\begin{proof}
By inserting the Fourier expansions of $F,f_1,\dots,f_k$ and by Fubini,
we obtain 
\begin{align*}
	&\phantom{= .} \Lambda(f_0, \dots, f_k ; F ) \\
	&= 
	\int_{\R^n} \int_{\R^m}
	F(x,y) f_0(x) f_1(x + \varphi_1(y)) \cdots f_k(x + \varphi_k(y)) \dx \psi(y) \dy \\
	&= 
	\int_{\R^n \times \R^m} \wh{F}( \kappa, \theta ) 
	\int_{(\R^n)^k} \wh{f}_1 ( \xi_1 ) \cdots \wh{f} ( \xi_k )
	\\
	& \phantom{\int_{(\R^n)^k}} 
	\int_{\R^n} f_0(x) e\big[ ( \kappa + \xi_1 + \dotsb + \xi_k ) \cdot x \big] \dx \\
	& \phantom{\int_{(\R^n)^k}} 
	\int_{\R^m} e\big[ \theta \cdot y + \xi_1 \cdot \varphi_1(y) + \dotsb + \xi_k \cdot \varphi_k(y) \big] \psi(y) \dy
	\ \dxi_1 \dots \dxi_k \dkappa \dtheta
	\\
	&= 	\int_{\R^n \times \R^m} \wh{F}( \kappa, \theta ) 
	\int_{(\R^n)^k} \wh{f}_0( - \kappa - \xi_1 - \dotsb - \xi_k) \wh{f}_1(\xi_1) \cdots \wh{f}_k( \xi_k )
	J_\theta(\bfxi) \dbfxi \dkappa \dtheta.
\end{align*}
\end{proof}

\section{Bounding the singular integral}
\label{sec:intg}

This section is devoted to the central task
of bounding the singular integral~\eqref{eq:scheme:SingIntg},
when the kernel $K$ involved
is the oscillatory integral $J_\theta$
from Definition~\ref{thm:ops:OscIntg}.
We will rely crucially on the following decay estimate.

\begin{proposition}
Assuming that the neighborhood $\Omega$ of zero 
has been chosen small enough, we have
\begin{align}
\label{eq:intg:OscIntgDecay}
	&&
	|J_\theta(\bfxi)| &\lesssim ( 1 + |\bfA^\transp \bfxi + \theta| )^{-m/2}
	&&(\bfxi \in (\R^n)^k,\, \theta \in \R^m).
\end{align}
\end{proposition}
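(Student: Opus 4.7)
The plan is to control $J_\theta(\bfxi)$ via the usual dichotomy between stationary and non-stationary phase, organized around the relative sizes of the linear frequency $\eta := \theta + \bfA^\transp \bfxi \in \R^m$ and the quadratic frequency $\lambda := \xi_{kn} \in \R$. When $|\eta|$ is bounded the estimate is trivial, since $(1+|\eta|)^{-m/2} \gtrsim 1$ while $|J_\theta(\bfxi)| \leq \|\psi\|_1 \lesssim 1$; so I shall assume $|\eta|$ is large from here on.

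As a preparatory step I would shrink $\Omega$ so that, throughout $\Supp(\psi) \subset \Omega$:
(i) $|\nabla Q(y)| \leq \eps_0$ for a constant $\eps_0$ as small as desired, which is possible because $\nabla Q(0) = 0$;
(ii) $|\det \nabla^2 Q(y)| \geq c_0 > 0$, which is already built into the standing assumption on $\psi$; and
(iii) $\nabla Q$ restricts to a diffeomorphism of $\Omega$ onto a neighborhood of $0$, by (ii) and the inverse function theorem.

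In the off-critical regime $|\lambda| \leq |\eta|/(2\eps_0)$ the phase $\Phi(y) = \eta \cdot y + \lambda Q(y)$ has
\[
|\nabla \Phi(y)| \geq |\eta| - |\lambda|\,|\nabla Q(y)| \geq |\eta|/2
\]
on $\Supp(\psi)$. Since moreover $|\lambda| \lesssim |\eta|$ in this range, standard integration by parts against the transpose of $L = (2\pi i)^{-1} |\nabla \Phi|^{-2} \nabla \Phi \cdot \nabla$ (which satisfies $L\, e(\Phi) = e(\Phi)$), iterated an arbitrary number of times, yields $|J_\theta(\bfxi)| \lesssim_N |\eta|^{-N}$ for every $N$, far beyond what is required.

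In the critical regime $|\lambda| > |\eta|/(2\eps_0)$ I would factor $\Phi(y) = \lambda \phi(y)$ with $\phi(y) := \tilde\eta \cdot y + Q(y)$ and $\tilde\eta := \eta/\lambda$ confined to the bounded set $\{|\tilde\eta| < 2\eps_0\}$, and treat $\lambda$ as the large parameter. The critical point equation $\nabla\phi(y) = 0$ reads $\nabla Q(y) = -\tilde\eta$, which has at most one solution $y^\ast \in \Omega$ by (iii), and this solution (when it exists) is a non-degenerate critical point of $\phi$ by (ii). Classical stationary phase, namely Morse-lemma reduction around $y^\ast$ when $y^\ast \in \Supp(\psi)$ combined with non-stationary phase on the complementary region, then delivers $|J_\theta(\bfxi)| \lesssim |\lambda|^{-m/2} \lesssim |\eta|^{-m/2}$. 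The step I would invest the most care in is the \emph{uniformity in $\tilde\eta$ of the implicit constant}: because $\tilde\eta$ ranges over the compact set $\overline{\{|\tilde\eta| < 2\eps_0\}}$, the smooth dependence of $y^\ast$, of the Morse-lemma change of coordinates, and of a supporting partition of unity on $\tilde\eta$ all transfer to uniform $C^\infty$ bounds, and the stationary phase constant inherits that uniformity.
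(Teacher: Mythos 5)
Your overall strategy is correct, but it is a genuinely more hands-on route than the paper's. The paper observes that $J_\theta(\bfxi)$ equals the Fourier transform of a smooth density $\wt{\psi}\,\dsigma_S$ carried by the graph hypersurface $S=\{(y,Q(y)) : y\in\Supp\psi\}\subset\R^{m+1}$, evaluated at the frequency $(\bfA^\transp\bfxi+\theta,\,\xi_{kn})$; since $\det\nabla^2 Q$ is bounded away from zero on $\Supp\psi$, $S$ has nonvanishing Gaussian curvature, and the classical decay estimate $(1+|\zeta|)^{-m/2}$ for such surface measures (Stein, Chapter~VIII) is then quoted. Your argument is essentially the proof of that classical estimate specialized to graphs: the dichotomy between $|\xi_{kn}|\lesssim|\eta|$ (non-stationary phase, rapid decay in $|\eta|$) and $|\xi_{kn}|\gtrsim|\eta|$ (stationary phase in the large parameter $\xi_{kn}$) is exactly how the cited bound is established, and your insistence on uniformity in $\wt{\eta}$ via compactness of the parameter set is the right point to be careful about. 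The trade-off is purely self-containedness versus brevity; both routes in fact give the stronger joint decay in $|\bfA^\transp\bfxi+\theta|+|\xi_{kn}|$, of which the stated bound is a weakening.

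One step needs correcting: in (i) you justify the smallness of $\sup_{\Supp\psi}|\nabla Q|$ by asserting $\nabla Q(0)=0$, which is not among the hypotheses. The paper assumes only $Q(0)=0$ and nondegeneracy of the Hessian at $0$, and it exploits elsewhere (in the discussion following Theorem~\ref{thm:intro:MainThm}) precisely the freedom to add a linear term to $Q$; so your proof as written does not cover the full generality of the statement. Fortunately the smallness of $\eps_0$ is never genuinely used: set $\eps_0=\sup_{\Supp\psi}|\nabla Q|$, which is finite by compactness. In the regime $|\xi_{kn}|\leq|\eta|/(2\eps_0)$ you still obtain $|\nabla\Phi|\geq|\eta|/2$ together with $|\xi_{kn}|\lesssim|\eta|$, so the iterated integration by parts is unaffected; in the complementary regime $\wt{\eta}$ still ranges over the fixed compact set $\{|\wt{\eta}|\leq 2\eps_0\}$ and $|\xi_{kn}|\gtrsim|\eta|$, so stationary phase still yields $|\xi_{kn}|^{-m/2}\lesssim|\eta|^{-m/2}$. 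You should, however, make sure the stationary-phase statement you invoke does not require the critical point $y^\ast$ to exist in $\Omega$ or to lie in $\Supp\psi$: the uniform estimate needed is the one valid whenever $\det\nabla^2\phi$ is bounded below on $\Supp\psi$ and there is at most one (necessarily nondegenerate) critical point nearby, which is exactly what (ii)--(iii) provide. With $\eps_0$ reinterpreted in this way the proof is complete.
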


\begin{proof}
By Definition~\ref{thm:ops:OscIntg}, we have
$J_\theta(\bfxi) = I( \bfA^\transp \bfxi + \theta , \xi_{kn} )$,
where
\begin{align*}
	I( \gamma, \gamma_{m+1} )
	= \int_{\R^m} e( \gamma \cdot y + \gamma_{m+1} Q(y) ) \psi(y) \dx.
\end{align*}
Consider the hypersurface $S = \{ (y,Q(y)) \,:\, y \in \Supp(\psi) \}$ of $\R^{m+1}$,
then our assumptions on $Q$ mean that $S$ has non-zero Gaussian curvature.
Observe that $I$ is the Fourier transform of $\wt{\psi}\dsigma_S$,
where $\sigma_S$ is the surface measure on $S$ and $\wt{\psi}$ is a smooth function
with same support as $\psi$.
Therefore it satisfies the decay estimate~\cite[Chapter~VIII]{Stein:Book}
\begin{align*}
	|I( \gamma, \gamma_{m+1} )| \lesssim ( 1 + |\gamma| + |\gamma_{m+1}| )^{-m/2}
\end{align*}
uniformly in $(\gamma,\gamma_{m+1}) \in \R^{m+1}$, which concludes the proof.
\end{proof}

The main result of this section is
a bound on the singular integral
for functions in $L^s$, 
for a range of $s$ depending on $n,m,k$.
In practice we will apply 
the proposition below when $s$ is close to $2$,
which requires the parameter $m'$ to be larger than $(k-1)n$,
and when the functions $F_i$ are powers of $|\wh{\mu}|$ or
bounded functions supported on $\cube{2}$.

\begin{proposition}
\label{thm:intg:MainIntgBound}
Let $1 + \tfrac{1}{k} < s < k+1$ and $m' > 0$, 
and write 
\begin{align*}
	&\phantom{(\bfxi \in (\R^n)^k, \theta \in \R^m)} &
	K_{\theta,m'}(\bfxi) &= (1 + |\bfA^\transp \bfxi + \theta|)^{-m'/2}
	&& (\bfxi \in (\R^n)^k, \theta \in \R^m).
\end{align*}
Let $F_0,\dots,F_k$ be non-negative measurable functions on $\R^n$.
Provided that 
\begin{align}
\label{eq:intg:mEq}
	m' > 2kn - \frac{2(k+1)}{s} n,
\end{align}
we have, uniformly in $\theta \in \R^m$,
\begin{align*}
	\Lambda^*( F_0, \dots, F_k ; K_{\theta,m'} ) 
	\lesssim_{s,m'} \| F_0 \|_s \cdots \| F_k \|_s.
\end{align*}
\end{proposition}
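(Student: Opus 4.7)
The plan is to establish the bound at two suitable endpoints of the $(1/s, m')$ parameter space and then obtain the full range by multilinear interpolation. The natural endpoints are $s_0 = 1 + 1/k$ with any $m'_0 > 0$, and $s_1 = k+1$ with $m'_1 > 2(k-1)n$. Observe that $2kn - 2(k+1)n/s$ is a linear function of $1/s$ vanishing at $s = (k+1)/k$ and equal to $2(k-1)n$ at $s = k+1$, so the hypothesis~\eqref{eq:intg:mEq} places $(1/s, m')$ strictly above the segment joining these two endpoints in the parameter plane.

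At the lower endpoint $s = (k+1)/k$, I would simply use $K_{\theta, m'} \leq 1$ together with the identity
\begin{align*}
\int_{(\R^n)^k} F_0(-\xi_1 - \cdots - \xi_k) \prod_{i=1}^k F_i(\xi_i) \, \dbfxi = \int_{\R^n} F_0(-\eta) (F_1 * \cdots * F_k)(\eta) \, \deta,
\end{align*}
obtained via the substitution $\eta = \sum_i \xi_i$. H\"older's inequality combined with the iterated Young convolution inequality at the common exponent $(k+1)/k$, whose H\"older conjugate is $k+1$, then yields the bound $\prod_{j=0}^k \|F_j\|_{(k+1)/k}$.

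At the upper endpoint $s = k+1$, I would split the kernel as $K_{\theta, m'} = \prod_{i=0}^{k} K_{\theta, m'}^{1/(k+1)}$ and apply $(k+1)$-fold H\"older with all exponents equal to $k+1$, reducing the task to showing
\begin{align*}
\int_{(\R^n)^k} |F_i|^{k+1}(\cdot) \, K_{\theta, m'}(\bfxi) \, \dbfxi \lesssim \|F_i\|_{k+1}^{k+1}
\end{align*}
for each $i \in \{0, 1, \ldots, k\}$. For $i \geq 1$, fixing $\xi_i$ and integrating over $\bfxi_{-i} \in \R^{(k-1)n}$: the injectivity of $\bfA_{-i}^\transp = [A_1^\transp \, \cdots \, \wh{A}_i^\transp \, \cdots \, A_k^\transp] : \R^{(k-1)n} \hookrightarrow \R^m$, guaranteed by the non-degeneracy in Definition~\ref{thm:scheme:NonDegen}, allows a linear change of variables reducing the inner integral to $\int (1 + |v + w|)^{-m'/2} \, dw$ over a $(k-1)n$-dimensional subspace of $\R^m$, with $v = A_i^\transp \xi_i + \theta$; this is finite uniformly in $v$ exactly when $m' > 2(k-1)n$. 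The case $i = 0$ reduces to the preceding one after the substitution $\eta = -\sum_j \xi_j$ eliminating $\xi_k$: the new kernel is controlled by the matrix $[(A_1 - A_k)^\transp \, \cdots \, (A_{k-1} - A_k)^\transp]$, which again has rank $(k-1)n$ by Definition~\ref{thm:scheme:NonDegen} (applied with $j = k$).

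For the interpolation, I introduce the analytic family $K_{\theta, z}(\bfxi) = (1 + |\bfA^\transp \bfxi + \theta|)^{-z/2}$, which satisfies $|K_{\theta, m' + it}| = K_{\theta, m'}$ for all $t \in \R$, so the endpoint bounds hold uniformly in $\im z$. Stein's analytic interpolation theorem applied to the $(k+1)$-linear form $T_z(F_0, \ldots, F_k) = \Lambda^*(F_0, \ldots, F_k; K_{\theta, z})$ gives, for any $\vartheta \in (0,1)$ and the intermediate parameters $1/s_\vartheta = (1-\vartheta)k/(k+1) + \vartheta/(k+1)$ and $m'_\vartheta = (1-\vartheta)m'_0 + \vartheta m'_1$, the bound $|T_{m'_\vartheta}(F_0, \ldots, F_k)| \lesssim \prod_j \|F_j\|_{s_\vartheta}$. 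A direct computation shows $m'_\vartheta > 2\vartheta(k-1)n = 2kn - 2(k+1)n/s_\vartheta$, so any $(s, m')$ satisfying the hypothesis can be realized with valid $m'_0 > 0$ and $m'_1 > 2(k-1)n$. The main technical obstacle is the upper endpoint, where the non-degeneracy of the submatrices $\bfA_{-i}$ (and its difference analogue) is essential for the convergence of the inner integral; the lower endpoint and the interpolation are comparatively routine.
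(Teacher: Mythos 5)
Your proof is correct, and it reaches the same estimate by a genuinely different (though closely related) route. The paper does not interpolate between endpoints: it applies a single weighted H\"older splitting $F_j = F_j^{\tau}\cdot F_j^{1-\tau}$ with conjugate exponents $p,p'$ (Proposition~\ref{thm:intg:IntermIntgBound}), bounding the kernel-free factor by the iterated ``leave-one-out'' H\"older argument and the kernel factor by the fiber moment bound of Proposition~\ref{thm:intg:FiberCdt}, and then solves a small system of equations (Corollary~\ref{thm:intg:SystEqs}) to choose $(\tau,p)$ so that both factors collapse to a single $L^s$ norm. Your two endpoints are exactly the paper's two factors $I_1$ and $I_2$: the case $s=(k+1)/k$, $K\leq 1$ is the Young/H\"older computation behind $I_1$, and the case $s=k+1$, $m'>2(k-1)n$ is the fiberwise integration behind $I_2$ (your hands-on treatment of the slices, including the $i=0$ case via the difference matrices $[(A_1-A_k)^\transp\,\cdots\,(A_{k-1}-A_k)^\transp]$, is precisely the content of Proposition~\ref{thm:intg:FiberCdt} and the rank conditions of Definition~\ref{thm:scheme:NonDegen}). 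What differs is the packaging: you glue the endpoints with Stein's analytic interpolation theorem for the family $K_{\theta,(1-z)m_0'+zm_1'}$, which is legitimate — the family is entire in $z$ with $|K_{\theta,m'+it}|=K_{\theta,m'}$, the endpoint constants are uniform in $\im z$ and in $\theta$, and your computation $m'_\vartheta>2\vartheta(k-1)n=2kn-2(k+1)n/s_\vartheta$ correctly identifies the admissible region as the open set above the segment, matching~\eqref{eq:intg:mEq} — but it imports the multilinear Stein machinery and the attendant verifications (analyticity and admissible growth for simple functions, density), whereas the paper's argument is entirely elementary and self-contained. Your version buys conceptual clarity about where the two constraints $m'>0$ and $m'>2(k-1)n$ come from; the paper's buys a shorter, machinery-free proof. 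Both rest on the same non-degeneracy input.
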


The first step towards the proof of this proposition
is to bound moments of the kernels $K_{\theta,m'}$
on certain subspaces.
Consider the $k+1$ linear maps $(\R^n)^k \rightarrow \R^n$ given by 
\begin{align*}
	\bfxi \mapsto - (\xi_1 + \dotsb + \xi_k) \eqqcolon \xi_0,
	\qquad
	\bfxi \mapsto \xi_j
	\qquad
	(1 \leq j \leq k).
\end{align*}
For every $0 \leq j \leq k$ and $\eta \in \R^n$, 
the set $\{ \bfxi \in (\R^n)^k \,:\, \xi_j = \eta \}$
is an affine subspace of $(\R^n)^k$ of dimension $(k-1)n$.
Recall that $\bfA^\transp : \R^{nk} \rightarrow \R^m$,
so that in the regime $m \geq (k-1) n$ 
we expect $(1 + |\bfA^\transp \cdot |)^{-1}$
to have bounded moments of order $q > (k-1)n$ 
on each of the subspaces $\{ \xi_j = \eta \}$,
under reasonable non-degeneracy conditions on the matrix $\bfA$.
As the next lemma shows, what is needed is precisely the content of 
Definition~\ref{thm:scheme:NonDegen}.

\begin{proposition}
\label{thm:intg:FiberCdt}
Let $0 \leq j \leq k$ and suppose that $m \geq (k-1)n$.
Then for $q > (k-1)n$ we have,
uniformly in $\eta \in \R^n$ and $\theta \in \R^m$,
\begin{align*}
	\int_{\xi_j = \eta} (1 + |\bfA^\transp \bfxi + \theta|)^{-q} \, \dsigma(\bfxi) \lesssim_q 1.
\end{align*}
\end{proposition}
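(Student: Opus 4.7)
My plan is to reduce the fiber integral to a standard integral on $\R^{(k-1)n}$ by parameterizing the affine subspace $\{\bfxi \in (\R^n)^k : \xi_j = \eta\}$ and identifying the linear part of $\bfxi \mapsto \bfA^\transp \bfxi$ restricted to this subspace with one of the matrices appearing in Definition~\ref{thm:scheme:NonDegen}. Once this identification is made, non-degeneracy guarantees full rank $(k-1)n$, and a change of variables completes the argument.

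First I would split into the cases $j\in\{1,\ldots,k\}$ and $j=0$. For $1\leq j\leq k$, I parameterize the subspace by $\bfeta = (\xi_i)_{i\in [k]\setminus\{j\}} \in \R^{(k-1)n}$, so that $\dsigma$ coincides with Lebesgue measure, and the argument of the kernel reads
\begin{align*}
    \bfA^\transp \bfxi + \theta \;=\; B_j \bfeta + (A_j^\transp \eta + \theta),
    \qquad B_j = [\,A_i^\transp\,]_{i\in [k]\setminus\{j\}},
\end{align*}
which is precisely the first matrix in Definition~\ref{thm:scheme:NonDegen}. When $j=0$, the subspace is $\{\xi_1+\cdots+\xi_k=-\eta\}$; I parameterize by $(\xi_1,\ldots,\xi_{k-1})$ and substitute $\xi_k = -\eta - \xi_1 - \cdots - \xi_{k-1}$, giving $\bfA^\transp\bfxi + \theta = B_0\bfeta + (-A_k^\transp\eta+\theta)$ with $B_0 = [\,A_i^\transp - A_k^\transp\,]_{1\leq i\leq k-1}$, the second matrix in Definition~\ref{thm:scheme:NonDegen} associated to the index $k$. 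The surface measure agrees with Lebesgue on the parameter up to a constant Jacobian.

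The final step is the change of variables. By Definition~\ref{thm:scheme:NonDegen}, the matrix $B \in \{B_0, B_j\}$ has rank $(k-1)n$; since $m\geq (k-1)n$, it is injective. Let $V = \Ima B \subset \R^m$, decompose the constant as $c = c^\parallel + c^\perp$ with $c^\parallel \in V$ and $c^\perp \in V^\perp$, and substitute $u = B\bfeta + c^\parallel$, a bijection $\R^{(k-1)n}\to V$ with constant Jacobian $|\det(B^\transp B)|^{1/2}$. Then
\begin{align*}
    |B\bfeta + c|^2 \;=\; |u|^2 + |c^\perp|^2 \;\geq\; |u|^2,
\end{align*}
so the fiber integral is majorized by
\begin{align*}
    |\det(B^\transp B)|^{-1/2} \int_V (1+|u|)^{-q}\,du,
\end{align*}
which is finite as soon as $q > \dim V = (k-1)n$.

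The substantive point is the uniformity in $(\eta,\theta)$: both arise inside the constant $c$, and projecting $c$ onto $\Ima B$ before substituting is what decouples the bound from $c$ entirely. This step would be the only subtlety in the argument, since $B$ is not surjective when $m > (k-1)n$ and one cannot simply absorb $c$ into $B\bfeta$ by a shift in $\bfeta$.
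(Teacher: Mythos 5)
Your proof is correct and follows essentially the same route as the paper: parameterize the fiber, use the non-degeneracy of Definition~\ref{thm:scheme:NonDegen} to see that the restricted linear map is injective, orthogonally decompose the affine constant (containing $\eta$ and $\theta$) relative to the image of that map, and conclude by Pythagoras and a change of variables. The only cosmetic difference is that you identify the restricted map explicitly with the matrices of Definition~\ref{thm:scheme:NonDegen} in coordinates, whereas the paper first shows the rank conditions are equivalent to injectivity of $\bfA^\transp$ on each subspace $\{\xi_j=0\}$ and then works with a rotation $\bfR$.
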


\begin{proof}
First note that the assumptions of Definition~\ref{thm:scheme:NonDegen}
mean that $\bfA^\transp$ is injective on $\{ \bfxi \,:\, \xi_j = 0 \}$ for $0 \leq j \leq k$.
To see that, observe that the conditions
\begin{align*}
	&\phantom{(1 \leq j \leq k)} &
	\bfA^\transp \bfxi = 0,\ \xi_j = 0 \quad&\Rightarrow\quad \bfxi = 0
	&&(0 \leq j \leq k)
\end{align*}
can be put in matrix form
\begin{align*}
	&\phantom{(1 \leq j \leq k)} &
	\begin{bmatrix}
		A_1^\transp & \hdots & A_j^\transp & \hdots & A_k^\transp	\\
		0 & \hdots & I_{n \times n} & \hdots & 0 
	\end{bmatrix} \bfxi = 0
	\quad&\Rightarrow\quad \bfxi = 0
	&&(1 \leq j \leq k),
	\\
	&&
	\begin{bmatrix}
		A_1^\transp & \hdots & A_k^\transp	\\
		I_{n \times n} & \hdots & I_{n \times n} 
	\end{bmatrix} \bfxi = 0 
	\quad&\Rightarrow\quad \bfxi = 0.
	&&
\end{align*}
Since $m + n \geq kn$, the $(m+n) \times kn$ matrices above
have empty kernel if and only if they have rank $kn$,
a set of conditions which is easily seen to be equivalent
to that of Definition~\ref{thm:scheme:NonDegen}.

Now let
\begin{align*}
	I = \int_{\xi_j = \eta} (1 + |\bfA^\transp \bfxi + \theta|)^{-q} \dsigma(\xi).
\end{align*}
We parametrize the affine subspace $\{ \xi_j = \eta \}$ by 
$\bfxi = \bfR \bfxi' + \bfxi_\eta$, where $\bfxi'$ runs over $(\R^n)^k$,
$\bfxi_\eta \in (\R^n)^k$ is picked such that $(\bfxi_\eta)_j = \eta$,
and $\bfR \in O(\R^{kn})$ is a rotation mapping the 
subspace $\R^{(k-1)n}$ to $\{ \xi_j = 0 \}$.
We obtain
\begin{align*}
	I = \int_{\R^{(k-1)n}} \Big(1 + | \bfA^\transp \bfR \bfxi' + \bfA^\transp \bfxi_\eta + \theta | \Big)^{-q} \dbfxi',
\end{align*}
and we write $\bfB = \bfA^\transp \bfR \in \R^{m \times kn}$, 
which is injective on $\R^{(k-1)n}$.
Consider the orthogonal decomposition 
$\bfA^\transp \bfxi_\eta + \theta = \bfB \bfxi_{\eta,\theta} + \gamma$
with $\bfxi_{\eta,\theta} \in \R^{(k-1)n}$ and 
$\gamma \in (\bfB(\R^{(k-1)n}))^\bot$, and observe that 
by Pythagoras and injectivity,
\begin{align*}
	| \bfB \bfxi' + \bfA^\transp \bfxi_\eta + \theta |
	= | \bfB( \bfxi' + \bfxi_{\eta,\theta} ) + \gamma | 
	\geq | \bfB (\bfxi' + \bfxi_{\eta,\theta}) |
	\gtrsim | \bfxi' + \bfxi_{\eta,\theta} |.
\end{align*}
Via the change of variables $\bfxi' \leftarrow \bfxi' + \bfxi_{\eta,\theta}$,
we deduce that
\begin{align*}
	I \lesssim \int_{\R^{(k-1)n}} \big( 1 + | \bfxi' | \big)^{-q} \mathrm{d}\bfxi',
\end{align*} 
which is bounded for $q > (k-1)n$, uniformly in $\eta \in \R^n$.
\end{proof}

\begin{proposition}
\label{thm:intg:IntermIntgBound}
	Let $F_0,\dots,F_k$ be non-negative measurable functions on $\R^n$.
	Let $\tau \in (0,1)$ and $p,p' \in (1,+\infty)$
	be parameters with $\frac{1}{p} + \frac{1}{p'} = 1$.	
	Let $H \geq 0$ be a parameter and suppose that
	$K$ is a non-negative measurable function on $\R^{nk}$ such that
	\begin{align*}
		&\phantom{(\eta \in \R^n,\ 0 \leq j \leq k)} &
		\int_{\xi_j = \eta} K(\bfxi)^{p'} \dsigma(\bfxi) &\leq H
		&&(\eta \in \R^n,\ 0 \leq j \leq k).
	\end{align*}
	Then
	\begin{align*}
		&\phantom{\leq .} \Lambda^*( F_0, \dots, F_k; K) \\
		&\leq H^{1/p'} \prod_{j=0}^k \bigg( \int_{\R^n} F_j(\eta)^{\tau p (k+1)/k} \deta \bigg)^{\frac{k}{k+1} \frac{1}{p}}
		\bigg( \int_{\R^n} F_j(\eta)^{(1-\tau)p'(k+1)} \deta \bigg)^{\frac{1}{k+1} \frac{1}{p'}}.
	\end{align*}
\end{proposition}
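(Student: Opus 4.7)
The plan is to prove the inequality in two steps: an initial Hölder splitting with exponents $(p,p')$ that isolates $K$ on one side, followed by symmetric $(k{+}1)$-function estimates on each side. To begin, I would write $F_j(\xi_j) = F_j(\xi_j)^\tau \cdot F_j(\xi_j)^{1-\tau}$ in the integrand of $\Lambda^*(F_0,\dots,F_k;K)$ and apply Hölder on $(\R^n)^k$, placing $\prod_j F_j^\tau$ on the $p$-side and $\prod_j F_j^{1-\tau}\cdot K$ on the $p'$-side (throughout, $\xi_0 = -\xi_1 - \dotsb - \xi_k$). This reduces the task to bounding a kernel-free integral and a kernel-laden integral.

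For the kernel-free integral, I would invoke the multilinear Young convolution inequality
\begin{align*}
    \int_{(\R^n)^k} G_0(-\xi_1 - \dotsb - \xi_k) \prod_{j=1}^k G_j(\xi_j) \, \dbfxi
    \leq \prod_{j=0}^k \| G_j \|_{(k+1)/k},
\end{align*}
valid for non-negative $G_j$ and obtained by iterating $\|f*g\|_r \leq \|f\|_p\|g\|_q$ (the symmetric exponent $(k+1)/k$ on all $k+1$ factors is forced by the zero-sum constraint once one checks that the Young exponent conditions close up). Applied to $G_j = F_j^{\tau p}$ and raised to $1/p$, this reproduces exactly the first family of factors in the claimed bound.

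For the kernel-laden integral, the key step is to distribute $K$ evenly across $k+1$ copies by writing $K^{p'} = \prod_{j=0}^k K^{p'/(k+1)}$ and applying Hölder with $k+1$ equal factors of exponent $k+1$:
\begin{align*}
    \int \prod_{j=0}^k F_j(\xi_j)^{(1-\tau)p'} K(\bfxi)^{p'} \, \dbfxi
    \leq \prod_{j=0}^k \bigg( \int F_j(\xi_j)^{(1-\tau)p'(k+1)} K(\bfxi)^{p'} \, \dbfxi \bigg)^{1/(k+1)}.
\end{align*}
For each $j$, I would disintegrate along the fiber $\{\xi_j = \eta\}$ — a $(k-1)n$-dimensional affine subspace of $(\R^n)^k$, with constant Jacobian in the co-area formula for $j=0$ since $\bfxi \mapsto -\sum_i \xi_i$ is a linear surjection onto $\R^n$ — and use the hypothesis $\int_{\xi_j = \eta} K^{p'} \, \dsigma \leq H$ to obtain $H \int F_j^{(1-\tau)p'(k+1)}$ for each term. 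Taking the geometric mean over $j$, raising to $1/p'$, and combining with the first factor yields precisely the claimed estimate, with exponents matching by direct computation.

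The main obstacle is structural rather than technical: one has to recognize that the symmetric zero-sum $(k+1)$-function setup dictates both the use of exponent $(k+1)/k$ on the kernel-free side and the $(k+1)$-fold Hölder decomposition of $K^{p'}$ on the kernel side, the latter being the mechanism that converts a fiberwise moment bound into a global estimate by distributing one ``copy'' of $K^{p'}$ along each of the $k+1$ distinguished coordinate projections $\xi_j$. Once this architecture is in place, the remaining manipulations are routine Hölder and Fubini computations.
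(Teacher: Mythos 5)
Your proposal is correct and follows essentially the same architecture as the paper's proof: the same initial Hölder splitting with exponents $(p,p')$ isolating the kernel, and the same $(k+1)$-fold equal-exponent Hölder decomposition of $K^{p'}$ followed by fiberwise integration against the moment hypothesis. The only (cosmetic) difference is that you dispatch the kernel-free integral by citing the multilinear Young inequality with all exponents $(k+1)/k$, whereas the paper derives the identical bound by hand via a $(k+1)$-fold Hölder step omitting one variable at a time followed by changes of variables; the two justifications are interchangeable.
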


\begin{proof}
We write $I = \Lambda^*( F_0, \dots, F_k; K )$.
By a first application of Hölder:
\begin{align}
	\notag
	I &= \int_{(\R^n)^k} \prod_{j=0}^k F_j( \xi_j )^{\tau + (1 - \tau)} K(\bfxi) \dbfxi \\
	\notag
	&\leq \bigg[ \int_{(\R^n)^k} \bigg( \prod_{j=0}^k F_j( \xi_j ) \bigg)^{\tau p} \dbfxi \bigg]^{\frac{1}{p}} \times
	\bigg[	\int_{(\R^n)^k}	\bigg( \prod_{j=0}^k F_j( \xi_j ) \bigg)^{(1-\tau)p'} 
		K(\bfxi)^{p'} \dbfxi \bigg]^{\frac{1}{p'}} \\
	\label{eq:intg:Ibound}
	& \eqqcolon ( I_1 )^{\tfrac{1}{p}} \times ( I_2 )^{\tfrac{1}{p'}}.
\end{align}
We can rewrite $I_1$ as follows: 
\begin{align*}
	I_1 
	= \int\limits_{(\R^n)^k} \prod_{j=0}^k F_j(\xi_j)^{\tau p} \dbfxi 
	= \int\limits_{(\R^n)^k} \prod_{i=0}^k \Bigg[ \prod_{\substack{ 0 \leq j \leq k \\ j \neq i }}
	F_j(\xi_j)^{\tau p} \Bigg]^{\frac{1}{k}} \dbfxi.
\end{align*}
By Hölder, we can then reduce to integrals involving each only $k$ of the $\xi_j$'s:
\begin{align*}
	I_1
	\leq \prod_{i=0}^k \bigg[ \, \int\limits_{(\R^n)^k}
	\prod_{\substack{ 0 \leq j \leq k \\ j \neq i }} 
	F_j(\xi_j)^{ \tau p (k+1)/k}  \dbfxi \bigg]^{\frac{1}{k+1}}.	
\end{align*}
Recall that $\xi_0 = \xi_1 + \dotsb + \xi_k$, so that after
appropriate changes of variables, each inner integral splits and we have
\begin{align}
	\notag
	I_1 
	&\leq
	\prod_{i=0}^k \Bigg[
	\prod_{\substack{ 0 \leq j \leq k \\ j \neq i }}
	\int_{\R^n} F_j(\eta)^{\tau p (k+1)/k} \deta \Bigg]^{\frac{1}{k+1}} 
	\\
	\label{eq:intg:I1Bound}
	&= \prod_{j=0}^k \bigg( \int_{\R^n}  F_j(\eta)^{\tau p (k+1)/k} \deta \bigg)^{\frac{k}{k+1}}.
\end{align}
To treat the integral $I_2$, we separate variables by Hölder,
and then integrate along slices~\cite{Nicolaescu:Coarea}:
\begin{align*}
	I_2
	&= \int_{(\R^n)^k} \prod_{j=0}^k F_j(\xi_j)^{(1-\tau)p'} K(\xi)^{p'} \dbfxi \\
	&\leq \prod_{j=0}^k \bigg[ \int_{(\R^n)^k} F_j(\xi_j)^{(1-\tau)p'(k+1)} K(\bfxi)^{p'} \dbfxi \bigg]^{\frac{1}{k+1}} \\
	&= \prod_{j=0}^k \bigg[ \int_{\eta \in \R^n} F_j(\eta)^{(1-\tau)p'(k+1)}
		\bigg( \int_{\xi_j = \eta}  K(\bfxi)^{p'} \dsigma(\bfxi) \bigg) \deta \bigg]^{\frac{1}{k+1}}.
\end{align*}
Inside each inner integral we use the fiber moment condition, so that eventually
\begin{align}
\label{eq:intg:I2Bound}
	I_2 \leq H \prod_{j=0}^k \bigg( \int_{\R^n} F_j(\eta)^{(1-\tau)p'(k+1)} \deta \bigg)^{\frac{1}{k+1}}.
\end{align}
The proof is finished upon inserting~\eqref{eq:intg:I1Bound}
and~\eqref{eq:intg:I2Bound} into~\eqref{eq:intg:Ibound}.
\end{proof}

It remains to determine the parameters $(\tau,p)$
in Proposition~\ref{thm:intg:IntermIntgBound}
that lead to a bound involving a single $L^s$ norm.

\begin{corollary}
\label{thm:intg:SystEqs}
Suppose that $1 + \tfrac{1}{k} < s < k+1$. 
Then there exists unique parameters 
$\tau \in (0,1)$ and $p \in (1,\infty)$ 
depending on $k$ and $s$ such that
\begin{align}
	\label{eq:intg:eq1}
	s = \frac{k+1}{k} p\tau = (k+1) p' (1 - \tau),
\end{align}
where $\tfrac{1}{p} + \tfrac{1}{p'} = 1$, 
and for such $(\tau,p)$ we have
\begin{align}
	\label{eq:intg:eq2}
	\frac{k+1}{s} &= \frac{k}{p} + \frac{1}{p'},
	\\
	\label{eq:intg:eq3}
	\frac{1}{p'} &= \frac{1}{k-1}\Big( k - \frac{k+1}{s} \Big).
\end{align}
\end{corollary}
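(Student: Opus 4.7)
The statement is a purely algebraic unpacking of the system~\eqref{eq:intg:eq1}, and the plan is simply to solve it explicitly and then check the constraints and the derived identities. There is no real obstacle, only bookkeeping.

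First I would use the two equalities in~\eqref{eq:intg:eq1} to express everything in terms of $p$ (with $\tau$ following from $p$). From $s = \tfrac{k+1}{k} p \tau$ we get $p\tau = \tfrac{sk}{k+1}$, and substituting $p' = p/(p-1)$ into $s = (k+1)p'(1-\tau)$ gives $p(1-\tau) = \tfrac{s(p-1)}{k+1}$. Adding these two equations eliminates $\tau$ and yields the single equation $(k+1)p = s(k+p-1)$, hence
\begin{align*}
	p = \frac{s(k-1)}{k+1-s}, \qquad \tau = \frac{sk}{(k+1)p} = \frac{k(k+1-s)}{(k+1)(k-1)}.
\end{align*}
This formula defines $p$ and $\tau$ uniquely, and the construction is clearly reversible, giving the uniqueness claim.

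Next I would verify that the hypothesis $1 + \tfrac{1}{k} < s < k+1$ is equivalent to $p \in (1,\infty)$ and $\tau \in (0,1)$. The condition $s < k+1$ makes the denominator of $p$ positive, hence $p > 0$ and $\tau > 0$. The condition $p > 1$ rearranges to $sk > k+1$, i.e.\ $s > 1 + \tfrac{1}{k}$, and the same inequality is exactly what is needed for $\tau < 1$, since $\tau < 1$ reads $k(k+1-s) < (k+1)(k-1)$, which simplifies to $sk > k+1$.

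Finally, the identities~\eqref{eq:intg:eq2} and~\eqref{eq:intg:eq3} follow without appeal to the explicit formulas. Indeed, inverting~\eqref{eq:intg:eq1} gives
\begin{align*}
	\frac{1}{p} = \frac{(k+1)\tau}{sk}, \qquad \frac{1}{p'} = \frac{(k+1)(1-\tau)}{s},
\end{align*}
so that $\tfrac{k}{p} + \tfrac{1}{p'} = \tfrac{(k+1)\tau}{s} + \tfrac{(k+1)(1-\tau)}{s} = \tfrac{k+1}{s}$, which is~\eqref{eq:intg:eq2}. For~\eqref{eq:intg:eq3}, I would combine~\eqref{eq:intg:eq2} with $\tfrac{1}{p} = 1 - \tfrac{1}{p'}$ to obtain $k - \tfrac{k}{p'} + \tfrac{1}{p'} = \tfrac{k+1}{s}$, which rearranges to $\tfrac{k-1}{p'} = k - \tfrac{k+1}{s}$, as required. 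The whole proof is a few lines of algebra; the only thing worth double-checking is the sign/ordering in the equivalence between the range of $s$ and the admissible range of $(\tau,p)$, which I traced above.
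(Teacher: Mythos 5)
Your proposal is correct and is essentially the same elementary algebraic argument as the paper's: both solve the linear system \eqref{eq:intg:eq1} for $(\tau,p)$ explicitly (arriving at the same formulas $p=\tfrac{s(k-1)}{k+1-s}$, $\tau=\tfrac{k(k+1-s)}{(k+1)(k-1)}$), check that the range $1+\tfrac1k<s<k+1$ corresponds exactly to $\tau\in(0,1)$, $p\in(1,\infty)$, and derive \eqref{eq:intg:eq2}--\eqref{eq:intg:eq3} by direct substitution; the only cosmetic difference is that you eliminate $\tau$ first and verify \eqref{eq:intg:eq2} afterwards, whereas the paper derives \eqref{eq:intg:eq2} first and then solves for $p$.
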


\begin{proof}
Starting from~\eqref{eq:intg:eq1},
and dividing by $\tfrac{k+1}{k}p$ in the first identity
and by $\tfrac{k+1}{k} pp'$ in the second,
we obtain the equivalent identities
\begin{align}
\label{eq:intg:eqtheta}
	\tau = \frac{k}{k+1} \frac{s}{p}
	\quad\text{and}\quad
	\Big( \frac{k}{p} + \frac{1}{p'} \Big) \tau = \frac{k}{p}.
\end{align}
Inserting the left-hand expression of $\tau$ 
in the right-hand identity,
we deduce the relation~\eqref{eq:intg:eq2}.
This is easily solved in $p,p'$ and one finds that
\begin{align*}
	\frac{1}{p} = \frac{1}{k-1} \Big( \frac{k+1}{s} - 1 \Big),
	\qquad
	\frac{1}{p'} = \frac{1}{k-1} \Big( k - \frac{k+1}{s} \Big),
\end{align*}
which in particular recovers~\eqref{eq:intg:eq3}.
It can be checked that $\tfrac{1}{p} \in (0,1)$
under the given conditions on $s$.
Inserting this value of $\frac{1}{p}$ in
the first identity of~\eqref{eq:intg:eqtheta}, we find that
\begin{align*}
	\tau = \frac{k}{k-1} \Big( 1 - \frac{s}{k+1} \Big),
\end{align*}
which again lies in $(0,1)$ for the given range of $s$.
\end{proof}

\textit{Proof of Proposition~\ref{thm:intg:MainIntgBound}.}
Apply Proposition~\ref{thm:intg:IntermIntgBound}
with $K( \bfxi ) = ( 1 + |\bfA^\transp \bfxi + \theta| )^{-m'/2}$,
and the choice of parameters $(\tau,p)$ from Proposition~\ref{thm:intg:SystEqs}.
By~\eqref{eq:intg:eq2}, this gives
\begin{align*}
	|\lambda^*( F_0, \dots, F_k; K )|
	&\leq H^{1/p'} \prod_{j=0}^k ( \| F_j \|_s^s )^{ \tfrac{1}{k+1} \big( \tfrac{k}{p} + \tfrac{1}{p'} \big) } \\
	&= H^{1/p'} \prod_{j=0}^k \| F_j \|_s,
\end{align*}
where $H = \max_{j} \sup_{\eta,\theta} \int_{\xi_j = \eta} ( 1 + |\bfA^\transp \bfxi + \theta| )^{-p'm'/2} \dsigma(\bfxi)$.
Via Proposition~\ref{thm:intg:FiberCdt} and~\eqref{eq:intg:eq3},
we have $H \lesssim_{s,m'} 1$ provided that
\begin{align*}
	m' > \frac{2(k-1)n}{p'} = 2 \Big( k - \frac{k+1}{s} \Big) n.
\end{align*}
\qed

From Proposition~\ref{thm:intg:MainIntgBound}, 
we now derive useful bounds on
the dual form $\Lambda^*$,
which are needed to develop the results of
Sections~\ref{sec:config}--\ref{sec:transf}.
In the course of the proof, we refer to a restriction estimate
from Appendix~\ref{sec:restr}, which states essentially 
that $\wh{\mu}$ is in $L^{2+\eps}$ when 
$\beta$ remains bounded away from zero
and $\alpha$ is close enough to $n$.
Recall the notation $T^\kappa f = f( \kappa + \,\cdot)$ 
from Section~\ref{sec:notation}.

\begin{proposition}
\label{thm:intg:SingIntgBounds}
Let $\beta_0 \in (0,n)$ and suppose that
for a constant $c > 0$
small enough with respect to $n,k,m$,
\begin{align*}
	(k-1) n < m < kn,
	\quad
	\beta_0 \leq \beta < n,
	\quad
	n - c \beta_0 \leq \alpha < n.
\end{align*}
Then there exists $\eps \in (0,1)$ 
depending at most on $n,k,m$ such that
\begin{align*}
	\sup\limits_{ (\kappa,\theta) \in \R^n \times \R^m }
	\Lambda^*(\, T^\kappa |\wh{\mu}|^{1-\eps}, |\wh{\mu}|^{1-\eps}, \dots, 
	|\wh{\mu}|^{1-\eps} ; |J_\theta|^{1-\eps} \,)
	< \infty,
	\\
	\Lambda^*(\, |\wh{\mu}|^{1-\eps}, \dots, |\wh{\mu}|^{1-\eps} ; |J| \,)
	\lesssim_{\beta_0} (n - \alpha)^{-O(1)}.
\end{align*}
\end{proposition}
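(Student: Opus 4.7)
The plan is to reduce both estimates to applications of Proposition~\ref{thm:intg:MainIntgBound}, by using the oscillatory decay~\eqref{eq:intg:OscIntgDecay} to bound $|J|$ and $|J_\theta|^{1-\eps}$ by kernels of the form $K_{\theta,m'}$, and then invoking the restriction estimate of Appendix~\ref{sec:restr} to control the resulting $L^s$-norms of $|\wh{\mu}|^{1-\eps}$.

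First I would fix the parameters. Since $m > (k-1)n$, the threshold $s_0 = 2(k+1)n/(2kn - m)$ strictly exceeds $2$. I choose $s \in (2, \min(s_0, k+1))$; this makes the condition $m > 2kn - 2(k+1)n/s$ of Proposition~\ref{thm:intg:MainIntgBound} hold with $m' = m$. By continuity there exists $\eps \in (0,1)$, depending only on $n,k,m$, such that the same condition also holds with $m' = (1-\eps)m$, and moreover $(1-\eps)s > 2$. Both $s$ and $\eps$ are now pinned down by $n,k,m$.

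For the second bound, \eqref{eq:intg:OscIntgDecay} gives $|J(\bfxi)| \lesssim K_{0,m}(\bfxi)$, so Proposition~\ref{thm:intg:MainIntgBound} yields
\[
\Lambda^*(|\wh{\mu}|^{1-\eps}, \ldots, |\wh{\mu}|^{1-\eps}; |J|) \lesssim \big\| |\wh{\mu}|^{1-\eps} \big\|_s^{k+1} = \|\wh{\mu}\|_{(1-\eps)s}^{(1-\eps)(k+1)}.
\]
Since $(1-\eps)s > 2$, $\beta \geq \beta_0$, and the hypothesis places $\alpha$ sufficiently close to $n$ in terms of $\beta_0$, the restriction estimate of Appendix~\ref{sec:restr} provides $\|\wh{\mu}\|_{(1-\eps)s} \lesssim_{\beta_0} (n-\alpha)^{-O(1)}$; raising this to the $(1-\eps)(k+1)$-th power delivers the second estimate. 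The first estimate follows along the same lines: $L^s$-norms are translation invariant, so $\|T^\kappa|\wh{\mu}|^{1-\eps}\|_s = \||\wh{\mu}|^{1-\eps}\|_s$ is independent of $\kappa$, and the pointwise bound $|J_\theta|^{1-\eps} \lesssim K_{\theta,(1-\eps)m}$ lets us apply Proposition~\ref{thm:intg:MainIntgBound} with $m' = (1-\eps)m$ to obtain a bound uniform in $\theta$ as well.

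The main obstacle is the joint calibration of $s$, $\eps$, and the smallness constant $c$ on $n-\alpha$. The restriction estimate of Appendix~\ref{sec:restr} requires $\alpha$ to approach $n$ at a rate depending on both the target exponent $p > 2$ and on $\beta_0$; since $s$ and $\eps$ are fixed by $n,k,m$, the exponent $(1-\eps)s$ is likewise fixed, and its $p$-dependent closeness threshold can be absorbed into a single constant $c = c(\beta_0, n, k, m)$, matching the hypothesis.
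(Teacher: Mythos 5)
Your proposal is correct and follows essentially the same route as the paper: bound $|J_\theta|^{1-\eps}$ by the kernel $K_{\theta,(1-\eps)m}$, apply Proposition~\ref{thm:intg:MainIntgBound} with an exponent $s$ slightly above $2$ (chosen so that \eqref{eq:intg:mEq} holds thanks to $m>(k-1)n$), use translation invariance of the $L^s$ norm to absorb the shift $T^\kappa$, and control $\|\wh{\mu}\|_{(1-\eps)s}$ via the restriction estimate of Appendix~\ref{sec:restr} together with the growth condition \eqref{eq:scheme:Dgrowth}. The only cosmetic differences are that the paper parametrizes $s=\tfrac{2+\delta}{1-\eps}$ and deduces the second bound from the first via $|J|\lesssim|J|^{1-\eps}$, whereas you run the argument with $m'=m$ directly; both are fine.
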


\begin{proof}
Let $\eps, \delta \in (0,1)$ be parameters.
Recalling the majoration~\eqref{eq:intg:OscIntgDecay},
we apply Proposition~\ref{thm:intg:MainIntgBound}
to $F_0 = T^\kappa |\wh{\mu}|^{1-\eps}$ and 
$F_i = |\wh{\mu}|^{1-\eps}$ for $i \geq 1$, 
with parameters $m' = (1 - \eps) m$ and $s = \tfrac{2 + \delta}{1-\eps}$.
The condition~\eqref{eq:intg:mEq} is fulfilled when
$m > (k-1)n$ and $\eps,\delta$ are small enough 
with respect to $n,k,m$.
We obtain, uniformly in $\kappa \in \R^n$ and $\theta \in \R^m$,
\begin{align*}
	\Lambda^*( T^\kappa |\wh{\mu}|^{1-\eps}, |\wh{\mu}|^{1-\eps}, \dots, |\wh{\mu}|^{1-\eps}; |J_\theta|^{1-\eps} )
	\lesssim_{\eps,s} \| |\wh{\mu}|^{1-\eps} \|_s^{k+1}
	= \| \wh{\mu} \|_{2+\delta}^{(1-\eps)(k+1)},
\end{align*}
By Proposition~\ref{thm:restr:MuMoment} and~\eqref{eq:scheme:Dgrowth}, we conclude that
\begin{align*}
		\Lambda^*( T^\kappa |\wh{\mu}|^{1-\eps}, |\wh{\mu}|^{1-\eps}, \dots, |\wh{\mu}|^{1-\eps} ; |J_\theta|^{1-\eps} )
		\lesssim_{\eps,\delta,\beta_0} (n-\alpha)^{-O(1)},
\end{align*}
and the second bound follows since $|J| \lesssim |J|^{1-\eps}$.
\end{proof}

\smallskip

\textit{Proof of Proposition~\ref{thm:scheme:OpFourierControl}.}
Let $\eps \in (0,1)$ be a parameter.
By Proposition~\ref{thm:ops:OpBoundBySingIntg} and~\eqref{eq:intg:OscIntgDecay}, we have
\begin{align}
\label{eq:intg:OpPullingOut}
	|\Lambda(f_0,\dots,f_k)| 
	\leq 
	\textstyle
	\prod_{0 \leq j \leq k} \| \wh{f}_j \|_{\infty}^\eps \cdot
	\Lambda^*\big( |\wh{f}_0|^{1-\eps} , \dots , |\wh{f}_k|^{1-\eps} ; (1 + |\bfA^\transp \,\cdot|)^{-m/2} \big).
\end{align}
For $\eps$ small enough with respect to $n,k,m$,
we may apply Proposition~\ref{thm:intg:MainIntgBound} 
with $s = \frac{2}{1-\eps}$ and $m' = m$,
together with Plancherel:
\begin{align*}
	\Lambda^*( |\wh{f}_0|^{1-\eps} , \dots , |\wh{f}_k|^{1-\eps} ; (1 + |\bfA^\transp \,\cdot|)^{-m/2} )
	&\lesssim  \textstyle \prod_{j=0}^k \| |\wh{f}_j|^{1-\eps} \|_{2/(1-\eps)}  \\
	&= \textstyle \prod_{j=0}^k \| \wh{f}_j \|_2^{1-\eps}	\\
	&= \textstyle \prod_{j=0}^k \| f_j \|_2^{1-\eps}	\\
	&\leq \textstyle \prod_{j=0}^k \| f_j \|_\infty^{1-\eps},
\end{align*}
where we used the assumption $\Supp(f_j) \subset \cube{2}$ in the last line.
Inserting this bound in~\eqref{eq:intg:OpPullingOut} 
finishes the proof.
\qed

\section{The configuration measure}
\label{sec:config}

In this section,
we aim to construct the measure $\nu \in \meas{n + m}$ 
specified in Proposition~\ref{thm:scheme:ConfigMeasure}.
We make extensive use of the singular integral bounds
derived in the previous section.
Our treatment is similar to that of
Chan et al.~\cite{CLP:Configs}, but we work in
a more abstract setting.
We assume throughout this section that
the dimensionality conditions~\eqref{eq:scheme:DimensionConditions} are met,
so that singular integral bounds are available.

We start with the proper definition of $\nu$,
which is the content of the next proposition
(recall Definition~\ref{thm:scheme:ConfigOp} and
Proposition~\ref{thm:scheme:FourierInv}).
We define an extra shift function $\varphi_0 = 0$
for notational convenience.

\begin{proposition}
\label{thm:config:nuDef}
	Define the functional $\nu$ at $F \in \schw{n + m}$ by
	\begin{align*}
		\langle \nu, F \rangle
		= \lim_{\eps \rightarrow 0} \Lambda( \mu_\eps, \dots, \mu_\eps ; F ),
	\end{align*}
	where $\mu_\eps = \mu \ast \phi_\eps$ for an approximate identity $\phi_\eps$
	with $\phi \in \cutoff{n}$.
	Then $\nu$ is well-defined and we have, for every $F \in \schw{n+m}$,
	\begin{align*}
		\langle \nu, F \rangle &=
		\Lambda^*( \wh{\mu}, \dots, \wh{\mu} ; \wh{F} ), \\
		| \langle \nu, F \rangle | &\leq \| F \|_{\infty} \Lambda( \mu , \dots, \mu ),
	\end{align*}
	where the integrals defined by the right-hand side expressions converge absolutely.
	Therefore $\nu$ extends by density to a positive bounded 
	linear operator on $\mathcal{C}_c(\R^{n+m})$.
\end{proposition}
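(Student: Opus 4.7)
My plan is to express $\Lambda(\mu_\eps,\ldots,\mu_\eps;F)$ via the Fourier-dual formula of Proposition~\ref{thm:ops:FourierInvOpSmooth}, which applies because each $\mu_\eps\in\bump{n}\subset\schw{n}$ and $F\in\schw{n+m}$, and then pass $\eps\to 0$ by dominated convergence. Writing $\wh{\mu_\eps}(\xi)=\wh{\mu}(\xi)\,\wh{\phi}(\eps\xi)$ with the normalization $\phi\geq 0$, $\int\phi=1$, gives $|\wh{\mu_\eps}|\leq|\wh{\mu}|$ with $\wh{\mu_\eps}(\xi)\to\wh{\mu}(\xi)$ everywhere, so the core of the argument is to dominate the $\eps$-integrand by a fixed integrable function independent of $\eps$.

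The key estimate will be a uniform-in-$(\kappa,\theta)$ singular-integral bound against the fractal measure. Since $\mu$ is a probability measure we have $|\wh\mu|\leq 1$, hence $|\wh\mu|\leq|\wh\mu|^{1-\eps'}$, and since $|J_\theta(\bfxi)|\lesssim\|\psi\|_{1}$ uniformly one gets $|J_\theta|\lesssim|J_\theta|^{1-\eps'}$ for any $\eps'\in(0,1)$. Choosing $\eps'$ as furnished by Proposition~\ref{thm:intg:SingIntgBounds}, I obtain
\begin{align*}
\sup_{(\kappa,\theta)\in\R^n\times\R^m}\Lambda^*\bigl(T^\kappa|\wh{\mu}|,|\wh{\mu}|,\ldots,|\wh{\mu}|;|J_\theta|\bigr)<\infty.
\end{align*}
Since $F\in\schw{n+m}$ gives $\wh{F}\in L^1(\R^{n+m})$, integrating the displayed bound against $|\wh F(\kappa,\theta)|$ yields absolute convergence of the target limit integral, and simultaneously provides the $\eps$-independent dominating function needed to identify $\langle\nu,F\rangle$ with $\Lambda^*(\wh{\mu},\ldots,\wh{\mu};\wh{F})$ in the sense of the statement.

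For the sup-norm bound, positivity of $\mu_\eps$ and Definition~\ref{thm:ops:ConfigOpSmooth} immediately give $|\Lambda(\mu_\eps,\ldots,\mu_\eps;F)|\leq\|F\|_\infty\Lambda(\mu_\eps,\ldots,\mu_\eps)$. The scalar quantity on the right equals $\Lambda^*(\wh{\mu_\eps},\ldots,\wh{\mu_\eps};J)$ by Proposition~\ref{thm:scheme:FourierInv}, and another dominated-convergence application, using the same domination $|\wh{\mu_\eps}|\leq|\wh{\mu}|$ and the integrability $\Lambda^*(|\wh\mu|,\ldots,|\wh\mu|;|J|)<\infty$ supplied by Proposition~\ref{thm:scheme:SingIntgBound}, shows it converges to $\Lambda^*(\wh{\mu},\ldots,\wh{\mu};J)=\Lambda(\mu,\ldots,\mu)$ (Definition~\ref{thm:scheme:OpMeasures}). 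Combined with the positivity $\Lambda(\mu_\eps,\ldots,\mu_\eps;F)\geq 0$ whenever $F\geq 0$, this exhibits $\nu$ as a positive linear functional on $\schw{n+m}$ of sup-norm $\leq\Lambda(\mu,\ldots,\mu)$, which extends uniquely to a positive bounded linear operator on $\mathcal{C}_c(\R^{n+m})$ by density. The only genuine difficulty will be the uniform singular-integral bound of the middle paragraph; everything else is routine bookkeeping around positivity and dominated convergence.
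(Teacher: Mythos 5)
Your proposal is correct and follows essentially the same route as the paper's proof: apply Proposition~\ref{thm:ops:FourierInvOpSmooth} to each $\mu_\eps$, dominate the integrand uniformly in $\eps$ via the supremum bound $\sup_{(\kappa,\theta)}\Lambda^*(|T^\kappa\wh{\mu}|,|\wh{\mu}|,\dots,|\wh{\mu}|;|J_\theta|)<\infty$ from Proposition~\ref{thm:intg:SingIntgBounds} (obtained exactly as you describe, by the majorations $|\wh{\mu}|\leq|\wh{\mu}|^{1-\eps'}$ and $|J_\theta|\lesssim|J_\theta|^{1-\eps'}$) together with $\wh{F}\in L^1$, and then derive the mass bound from positivity plus a second dominated-convergence step. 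The only cosmetic difference is that the paper keeps the mollifier factor $h_\eps$ explicit rather than absorbing it into $\wh{\mu_\eps}$; this changes nothing.
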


\begin{proof}
By Proposition~\ref{thm:ops:FourierInvOpSmooth}, we have
\begin{align*}
	&\phantom{= .} \Lambda( \mu_\eps, \dots, \mu_\eps ; F ) \\
	&= \int_{\R^{n+m}} \wh{F}(\kappa,\theta)
	\int_{(\R^n)^k} \wh{\mu}( - \kappa - \xi_1 - \dotsb - \xi_k ) \prod_{j=1}^k \wh{\mu}(\xi_j)
	J_\theta(\bfxi) h_\eps(\bfxi,\kappa) \dbfxi \dkappa\dtheta
\end{align*}
where $h_\eps(\bfxi,\kappa) = \wh{\phi}( - \eps (\kappa + \xi_1 + \dotsb + \xi_k ) ) \prod_{j=1}^k \wh{\phi}(\eps \xi_j)$.
Since $h_\eps$ is bounded by one in absolute value
and tends to $1$ pointwise as $\eps \rightarrow 0$,
the limit of $\Lambda( \mu_\eps, \dots, \mu_\eps ; F )$ as $\eps \rightarrow 0$ exists 
and equals $\Lambda^*( \wh{\mu}, \dots, \wh{\mu} ; \wh{F} )$ by dominated convergence,
since we have uniform boundedness of
\begin{align*}
	&\phantom{=} \int_{\R^{n+m}} |\wh{F}(\kappa,\theta)|
	\int_{(\R^n)^k}|\wh{\mu}(\kappa + \xi_1 + \dotsb + \xi_k)|
	\prod_{j=1}^k  |\wh{\mu}(\xi_j)|
	|J_\theta(\bfxi)| \dbfxi \dkappa\dtheta
	\\
	&\leq
	\sup\limits_{(\kappa,\theta) \in \R^n \times \R^m} 
	\Lambda^*(\, |T^\kappa \wh{\mu}|, |\wh{\mu}|, \dots, |\wh{\mu}| ; |J_\theta| \,) \times
	\int_{\R^{n+m}} |\wh{F}(\kappa,\theta)| \dkappa\dtheta 
	< \infty,
\end{align*}
via Proposition~\ref{thm:intg:SingIntgBounds} 
and the majorations 
$|J_\theta| \lesssim |J_\theta|^{1-\eps}$, $|\wh{\mu}| \leq |\wh{\mu}|^{1-\eps}$.
Recalling Definitions~\ref{thm:scheme:ConfigOp} and~\ref{thm:ops:ConfigOpSmooth}, 
and using the positivity of $\mu_\eps$, we have also
\begin{align*}
	|\langle \nu, F \rangle|
	= \lim_{\eps \rightarrow 0} |\Lambda( \mu_\eps, \dots, \mu_\eps ; F )|
	\leq \| F \|_\infty \, \cjg{\lim\limits_{\eps \rightarrow 0}}\, \Lambda( \mu_\eps, \dots, \mu_\eps ).
\end{align*}
By Fourier inversion (Proposition~\ref{thm:scheme:FourierInv}) 
and another instance of the dominated convergence theorem,
exploiting the finiteness of $\Lambda^*( |\wh{\mu}|, \dots, |\wh{\mu}| ; |J| )$
provided by Proposition~\ref{thm:intg:SingIntgBounds},
we obtain
\begin{align*}
	|\langle \nu, F \rangle|
	\leq \| F \|_\infty \, \cjg{\lim\limits_{\eps \rightarrow 0}}\, 
	\Lambda^*( \wh{\mu}_\eps, \dots, \wh{\mu}_\eps ; J )
	= \| F \|_\infty \, \Lambda^*( \wh{\mu}, \dots, \wh{\mu} ; J ).
\end{align*}
This last quantity equals $\| F \|_\infty\, \Lambda( \mu, \dots, \mu )$ by 
Definition~\ref{thm:scheme:OpMeasures}.
\end{proof}

\begin{proposition}
\label{thm:config:nuSupport}
	When defined, the measure $\nu$ 
	of Proposition~\ref{thm:config:nuDef} 
	is supported on the compact set
	\begin{align*}
		X = \{ (x,y) \in E \times \Supp \psi \,:\,
		( x, x + \varphi_1(y) , \dots , x + \varphi_k(y) ) \in E^{k+1} \}.
	\end{align*}
\end{proposition}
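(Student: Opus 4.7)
The plan is to show that $\langle \nu, F \rangle = 0$ for every $F \in \bump{n+m}$ whose support is disjoint from $X$; since such $F$ are dense in $\{ G \in \mathcal{C}_c(\R^{n+m}) : \Supp G \cap X = \emptyset \}$ and $\nu$ extends to a positive bounded linear functional on $\mathcal{C}_c(\R^{n+m})$ by Proposition~\ref{thm:config:nuDef}, this is enough to conclude $\Supp \nu \subset X$.

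First I would verify that $X$ is compact. Because $E$ is compact, $\Supp \psi$ is compact, and the shift functions $\varphi_i$ are continuous, $X$ is the intersection of $E \times \Supp \psi$ with the preimage of the closed set $E^{k+1}$ under the continuous map $(x,y) \mapsto (x, x + \varphi_1(y), \dots, x + \varphi_k(y))$; hence it is closed inside a compact set.

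The heart of the argument is support-tracking for the approximate convolutions. Since $\Supp \mu \subset E$, the function $\mu_\eps = \mu \ast \phi_\eps$ is supported in $E + B(0, C\eps)$ for some constant $C = C(\phi) > 0$. Setting
\[
X_\delta = \{ (x,y) \in \R^n \times \Supp \psi \,:\, d(x,E) \leq \delta \text{ and } d(x + \varphi_i(y), E) \leq \delta \text{ for all } 1 \leq i \leq k \},
\]
the integrand defining $\Lambda(\mu_\eps, \dots, \mu_\eps; F)$ in Definition~\ref{thm:ops:ConfigOpSmooth} vanishes outside $\Supp F \cap X_{C\eps}$. The sets $X_\delta$ are closed, nested, and satisfy $\bigcap_{\delta > 0} X_\delta = X$ since $E$ is closed.

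The only mildly technical point is the elementary compactness lemma: if $K$ is compact and disjoint from $X$, then there exists $\delta_0 > 0$ with $K \cap X_{\delta_0} = \emptyset$. This follows by sequential compactness, since otherwise picking $(x_n, y_n) \in K \cap X_{1/n}$ and extracting a convergent subsequence would yield a limit point lying in $K \cap \bigcap_\delta X_\delta = K \cap X$, contradicting disjointness. Applying this lemma to $K = \Supp F$ and choosing $\eps < \delta_0 / C$ gives $\Lambda(\mu_\eps, \dots, \mu_\eps; F) = 0$; letting $\eps \to 0$ in the definition of $\langle \nu, F \rangle$ from Proposition~\ref{thm:config:nuDef} finishes the proof. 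No serious obstacle is expected here: once the support of $\mu_\eps$ is controlled, the rest is a straightforward topological argument.
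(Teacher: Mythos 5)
Your proposal is correct and matches the paper's argument in all essentials: both reduce to showing $\langle \nu, F\rangle = 0$ for test functions supported off $X$, use compactness of $\Supp F$ to extract a uniform positive distance to the configuration set (your $\delta_0$ is the paper's constant $c$), and then observe that $\mu_\eps$ is supported in $E + B(0,C\eps)$ so the integrand vanishes for small $\eps$. No substantive difference.
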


\begin{proof}
We can rewrite $X = ( E \times \Supp \psi ) \cap \Phi^{-1}(E^{k+1})$,
where $\Phi$ is the smooth map defined by~\eqref{eq:scheme:ShiftsPattern},
so that $X$ is closed and bounded, and therefore compact.
Since its complement $X^c$ is open, it is enough to show
that $\langle \nu, F \rangle = 0$ for
every $F \in \cutoff{n+m}$ such that
$\Supp F \subset X^c$.
By compactness we know that
there exists $c > 0$ such that
\begin{align*}
	\max_{0 \leq j \leq k} d(x + \varphi_j(y),E) \geq c > 0 
	\quad\text{for all}\quad 
	(x,y) \in \Supp F \cap (\R^n \times \Supp \psi).
\end{align*}
On the other hand,
\begin{align*}
	\langle \nu, F \rangle
	= \lim_{\eps \rightarrow 0} \int_{\Supp F \cap (\R^n \times \Supp \psi)} 
	F(x,y) \prod_{j=0}^k \mu_\eps(x + \varphi_j(y)) \dx\, \psi(y) \dy.
\end{align*}
For $\eps$ small enough, 
since $\mu_\eps$ is supported on $E + B(0,C\eps)$
for a certain $C > 0$,
the integrand above is always zero.
\end{proof}

\begin{proposition}
\label{thm:config:nuMass}
We have $\| \nu \| = \Lambda( \mu, \dots, \mu )$.
\end{proposition}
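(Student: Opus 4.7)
The plan is to prove the two inequalities $\|\nu\| \leq \Lambda(\mu,\dots,\mu)$ and $\|\nu\| \geq \Lambda(\mu,\dots,\mu)$ separately. The first one is essentially already contained in Proposition~\ref{thm:config:nuDef}: indeed, the bound $|\langle \nu, F\rangle| \leq \|F\|_\infty\, \Lambda(\mu,\dots,\mu)$ extends by density to every $F \in \mathcal{C}_c(\R^{n+m})$, and taking the supremum over such $F$ with $\|F\|_\infty \leq 1$ yields the upper bound on the total variation. Moreover, since $\nu$ is a limit of integrals against the non-negative integrands $F(x,y)\psi(y)\prod_j \mu_\eps(x + \varphi_j(y))$, it is a positive measure, so $\|\nu\| = \nu(\R^{n+m})$.

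For the matching lower bound, the idea is to test $\nu$ against a cutoff that captures its entire mass while simultaneously capturing all of $\Lambda(\mu_\eps,\dots,\mu_\eps)$ for small $\eps$. Since by Proposition~\ref{thm:config:nuSupport} the measure $\nu$ is supported on the compact set $X \subset E \times \Supp\psi$, and since the support of $\mu_\eps$ is contained in a small neighborhood of $E$ for $\eps$ small, I fix a function $F \in \cutoff{n+m}$ with $0 \leq F \leq 1$ satisfying $F \equiv 1$ on a fixed open neighborhood $U$ of the compact set $E \times \Supp\psi$. Then on one hand $\langle \nu, F\rangle = \nu(\R^{n+m}) = \|\nu\|$, and on the other hand, for $\eps$ small enough the integrand of $\Lambda(\mu_\eps,\dots,\mu_\eps;F)$ is supported in $U$, so the cutoff does nothing and $\Lambda(\mu_\eps,\dots,\mu_\eps;F) = \Lambda(\mu_\eps,\dots,\mu_\eps)$. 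Passing to the limit $\eps \to 0$ through the defining relation of $\nu$ gives
\begin{equation*}
	\|\nu\| = \lim_{\eps \to 0} \Lambda(\mu_\eps, \dots, \mu_\eps).
\end{equation*}

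It remains to identify the right-hand side with $\Lambda(\mu,\dots,\mu)$. Here I invoke Proposition~\ref{thm:scheme:FourierInv} to write $\Lambda(\mu_\eps,\dots,\mu_\eps) = \Lambda^*(\wh{\mu}_\eps,\dots,\wh{\mu}_\eps; J)$, and note that $\wh{\mu}_\eps = \wh{\mu} \cdot \wh{\phi}(\eps\,\cdot)$ with $|\wh{\phi}| \leq 1$ (as $\phi$ is a non-negative approximate identity of total mass one) and $\wh{\phi}(\eps\xi) \to 1$ pointwise. The dominating function $|\wh{\mu}|^{k+1}|J|$ is integrable against $\dbfxi$ by Proposition~\ref{thm:intg:SingIntgBounds} (after the cosmetic majoration $|\wh{\mu}| \leq |\wh{\mu}|^{1-\eps_0}$ on $\Supp\mu$-bounded Fourier transforms, since $|\wh{\mu}| \leq 1$), so dominated convergence yields the limit $\Lambda^*(\wh{\mu},\dots,\wh{\mu};J) = \Lambda(\mu,\dots,\mu)$ by Definition~\ref{thm:scheme:OpMeasures}.

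The argument is essentially a routine approximation, so there is no serious obstacle; the only point requiring care is ensuring the cutoff $F$ is chosen large enough to contain the supports of \emph{all} the mollified integrands $\mu_\eps(x + \varphi_j(y))\psi(y)$ simultaneously for small $\eps$, which follows from the compactness of $E \times \Supp\psi$ and continuity of the $\varphi_j$. The integrability needed to pass to the limit is precisely what the singular integral estimates of Section~\ref{sec:intg} provide.
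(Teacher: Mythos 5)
Your proof is correct and follows essentially the same route as the paper's: test $\nu$ against a smooth cutoff equal to $1$ on a compact neighborhood containing both $\Supp\nu$ and the supports of the mollified integrands for small $\eps$, so that $\langle\nu,F\rangle=\|\nu\|=\lim_{\eps\to 0}\Lambda(\mu_\eps,\dots,\mu_\eps)$, and then identify the limit with $\Lambda(\mu,\dots,\mu)$ by dominated convergence via the finiteness of $\Lambda^*(|\wh{\mu}|,\dots,|\wh{\mu}|;|J|)$ from Proposition~\ref{thm:intg:SingIntgBounds}. Your opening paragraph establishing the inequality $\|\nu\|\leq\Lambda(\mu,\dots,\mu)$ is redundant, since the second argument already yields equality directly.
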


\begin{proof}
Consider the compact set $X$ from Proposition~\ref{thm:config:nuSupport},
and the larger compact set
\begin{align*}
	Y = \{ (x,y) \in \R^n \times \Supp \psi \,:\, 
			d(x + \varphi_j(y),E) \leq 1 \quad \text{for $0 \leq j \leq k$} \}.
\end{align*}
Pick a smoothed ball indicator $F \in \cutoff{n+m}$ 
such that $F = 1$ on $Y$.
Since $\nu$ is supported on $X \subset Y$, we have
\begin{align*}
	\nu(\R^{n+m}) 
	= \langle \nu, F \rangle
	= \lim_{\eps \rightarrow 0} \int_{\R^n \times \Supp \psi} F(x,y) 
	\prod_{j = 0}^k \mu_\eps( x + \varphi_j(y) ) \dx \psi(y) \dy.
\end{align*}
Since $(x,y) \mapsto \prod_{j = 0}^k \mu_\eps( x + \varphi_j(y) )$ is supported on $Y$
for $\eps$ small enough, we have therefore
\begin{align*}
	\nu(\R^{n+m}) 
	= \lim_{\eps \rightarrow 0}	\Lambda( \mu_\eps, \dots, \mu_\eps ).
\end{align*}
By the same reasoning as in the end of the proof of 
Proposition~\ref{thm:config:nuDef}, 
using again the bound $\Lambda^*(\, |\wh{\mu}| , \dots , |\wh{\mu}| ; |J| \,) < \infty$
provided by Proposition~\ref{thm:intg:SingIntgBounds},
we find eventually that
$\| \nu \| = \Lambda( \mu, \dots, \mu )$.
\end{proof}

We now turn to the last expected feature of the
configuration measure $\nu$, which is that
it has zero mass on any hyperplane.

\begin{proposition}
\label{thm:config:nuZeroMassHp}
We have $\nu(H) = 0$ for every hyperplane $H$ of $\R^{n+m}$.
\end{proposition}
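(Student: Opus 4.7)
The plan is to approximate the indicator $1_H$ of a given hyperplane $H = \{v \in \R^{n+m} : \omega \cdot v = c\}$ by smooth tubular cutoffs concentrated in a thin slab around $H$, and to show that the resulting $\nu$-mass vanishes. We may assume $|\omega| = 1$ and write $\omega = (\omega_1, \omega_2) \in \R^n \times \R^m$. Choose $\rho \in \schw{1}$ with $\rho(0) = 1$, and $\phi \in \cutoff{n+m}$ equal to $1$ on a neighborhood of $\Supp \nu$; set
$$F_\delta(v) = \phi(v)\, \rho\!\Big(\frac{\omega \cdot v - c}{\delta}\Big).$$
Since $F_\delta \to \phi \cdot 1_H$ pointwise as $\delta \to 0$ and $|F_\delta| \leq \|\rho\|_\infty \phi$, dominated convergence yields $\langle \nu, F_\delta \rangle \to \nu(H)$; it therefore suffices to prove $\langle \nu, F_\delta \rangle \to 0$.

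Combining Propositions~\ref{thm:config:nuDef} and~\ref{thm:ops:FourierInvOpSmooth}, we may write
$$\langle \nu, F_\delta \rangle = \int_{\R^{n+m}} \wh{F}_\delta(\kappa, \theta)\, K(\kappa, \theta)\, d\kappa\, d\theta,$$
where $K(\kappa, \theta) = \int \wh{\mu}(-\kappa - \xi_1 - \dotsb - \xi_k) \wh{\mu}(\xi_1) \dotsm \wh{\mu}(\xi_k)\, J_\theta(\bfxi)\, d\bfxi$ is uniformly bounded thanks to Proposition~\ref{thm:intg:SingIntgBounds}. A direct computation shows that $\wh{F}_\delta$ is a distribution concentrated along the dual line $\R\omega$:
$$\wh{F}_\delta(\xi) = \delta \int_\R \wh{\phi}(\xi - s\omega)\, e^{-2\pi ics}\, \wh{\rho}(\delta s)\, ds.$$
Substituting this into the previous display, interchanging integrations, and rescaling via $u = \delta s$ yields
$$\langle \nu, F_\delta \rangle = \int_\R \wh{\rho}(u)\, e^{-2\pi icu/\delta}\, P(u/\delta)\, du, \qquad P(s) := \int \wh{\phi}(\xi - s\omega)\, K(\xi)\, d\xi.$$
Since $\wh{\rho} \in L^1(\R)$ and $|P(s)| \leq \|\wh{\phi}\|_1 \|K\|_\infty$, dominated convergence reduces the claim to showing $P(s) \to 0$ as $|s| \to \infty$.

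To control $P(s)$, we perform, inside the definition of $K(\xi + s\omega)$ with $\xi = (\kappa, \theta)$, the substitution $\xi_j \to \xi_j - s\omega_1$ for a well-chosen index $j$. This migrates the $s\omega_1$-shift from $\kappa$ into a single translated factor $\wh{\mu}(\xi_j - s\omega_1)$, while modifying the parameter of $J$ from $\theta + s\omega_2$ to $\theta + s(\omega_2 - A_j^\transp \omega_1)$. When $\omega_1 \neq 0$, the translated factor decays as $|s| \to \infty$ via the Fourier hypothesis $|\wh{\mu}(\eta)| \leq D_\alpha(1 + |\eta|)^{-\beta/2}$ with $\beta > 0$. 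When $\omega_1 = 0$ (so $\omega_2 \neq 0$), the shift acts entirely in the parameter of $J$, and $s \mapsto J_{\theta + s\omega_2}(\bfxi)$ is Schwartz in $s$ since it is a one-dimensional Fourier transform of a smooth compactly supported function. In either case, pointwise decay of the integrand in $s$ is available; the non-degeneracy of $(A_j)$ and $Q$ ensures that $j$ can be chosen so that the substitution does not spoil the oscillatory integral.

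The main obstacle is to convert this pointwise decay, inside the $\bfxi$-integral defining $K$, into actual decay of $P(s)$. The plan is to split the $\bfxi$-integral into a region where the shifted factor is uniformly small and a complementary tail whose contribution is controlled via the restriction-type $L^{2+\eps}$ moments of $\wh{\mu}$ from Appendix~\ref{sec:restr}, inserted into the singular-integral machinery of Proposition~\ref{thm:intg:MainIntgBound} exactly as in the proof of Proposition~\ref{thm:intg:SingIntgBounds}. Once a uniform-in-$s$ integrable majorant together with pointwise vanishing has been obtained, dominated convergence yields $P(s) \to 0$; the earlier reduction then gives $\langle \nu, F_\delta \rangle \to 0$ and hence $\nu(H) = 0$.
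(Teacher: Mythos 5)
Your reduction is sound up to and including the step ``it suffices to show $P(s)\to 0$ as $|s|\to\infty$'': the slab cutoff, the formula $\langle\nu,F_\delta\rangle=\int\wh F_\delta\,K$, the uniform boundedness of $K$ via Proposition~\ref{thm:intg:SingIntgBounds}, and the rescaling to $P(u/\delta)$ are all correct (this is organized differently from the paper, which works directly with a two-scale cutoff adapted to the rotated coordinates, but the two reductions are equivalent). The problem is that the decisive step --- proving $P(s)\to0$, i.e.\ the decay of $K(\xi+s\omega)$ --- is only sketched, and the route you announce for it does not go through as stated. You propose to finish by dominated convergence from ``a uniform-in-$s$ integrable majorant together with pointwise vanishing.'' No such majorant exists: for fixed $(\kappa,\theta)$ the integrand of $K(\kappa+s\omega_1,\theta+s\omega_2)$ contains the translate $\wh\mu(-\kappa-s\omega_1-\xi_1-\dots-\xi_k)$ and the shifted kernel $J_{\theta+s\omega_2}$, and the best $s$-independent pointwise bound one can extract is $D_\alpha\prod_{i=1}^k|\wh\mu(\xi_i)|$, which is \emph{not} integrable over $(\R^n)^k$ (the restriction estimates only put $\wh\mu$ in $L^{2+\eps}$, not in $L^1$). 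So plain dominated convergence is unavailable. The alternative you also float --- splitting $(\R^n)^k$ into a region where the shifted factor is uniformly small and a tail --- can in fact be completed (on the tail one has $|\xi_i|\gtrsim|s|$ for some $i$, and one feeds $F_i=|\wh\mu|^{1-\eps}1_{|\cdot|>c|s|}$ into Proposition~\ref{thm:intg:MainIntgBound}, whose $L^s$ norm tends to $0$), but this is precisely the content of the proof and you have not carried it out; nor have you justified the ``well-chosen index $j$'' beyond an appeal to non-degeneracy.

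For comparison, the paper avoids the dichotomy on $\omega$ and the splitting altogether by a single pointwise estimate: multiplying the individual bounds $|\wh\mu(\eta)|\lesssim_\alpha(1+|\eta|)^{-\beta/2}$ and $|J_\theta(\bfxi)|\lesssim(1+|\bfA^\transp\bfxi+\theta|)^{-m/2}\leq(1+|\bfA^\transp\bfxi+\theta|)^{-\beta/2}$ and using the triangle inequality on the decompositions $\kappa=(\kappa+\sum\xi_j)-\sum\xi_j$ and $\theta=(\bfA^\transp\bfxi+\theta)-\sum A_j^\transp\xi_j$, one gets that the \emph{entire product} in the integrand of $K(\kappa,\theta)$ is $\lesssim_\alpha(1+|\kappa|+|\theta|)^{-\beta/2}$ \emph{uniformly in} $\bfxi$. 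Pulling out an $\eps$-th power of the integrand and bounding the remaining $(1-\eps)$-th power by the uniformly convergent singular integral of Proposition~\ref{thm:intg:SingIntgBounds} yields $|K(\gamma)|\lesssim_\alpha(1+|\gamma|)^{-\eps\beta/2}$. This is a quantitative form of exactly the statement you need; substituting it for your last paragraph would close the gap (and in fact gives $\langle\nu,F_\delta\rangle\lesssim\delta^{\eps\beta/2}$ directly, without passing through Riemann--Lebesgue).
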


\begin{proof}
Consider a hyperplane $H < \R^{n+m}$
and a rotation $R \in O_{n+m}(\R)$
such that $H = R ( \R^{n+m-1} \times \{0\} )$.
Consider parameters $L \geq 1$ and $\delta \in (0,1]$.
We consider a Schwartz function $F_\delta$ of the form
\begin{align}
\label{eq:config:HpCutoff}
	F_\delta \circ{R} = \chi\Big( \frac{\cdot}{L} \Big) \, \Xi\Big( \frac{\cdot}{\delta} \Big),
\end{align}
where $\chi \in \schw{n+m-1}$ and $\Xi \in \mathcal{S}(\R)$
are non-negative such that $\chi \geq 1$ on $[-1,1]^{n+m-1}$
and $\Xi(0) \geq 1$.
Writing $H_L = R (\, [-L,L]^{n+m-1} \times \{0\} \,)$, 
we have therefore
$\nu(H_L) \leq \langle \nu , F_\delta \rangle$,
and it is enough to show that $\langle \nu , F_\delta \rangle$
tends to $0$ as $\delta \rightarrow 0$,
for every fixed $L \geq  1$.
By Proposition~\ref{thm:config:nuDef},
and writing $\gamma = (\kappa,\theta) \in \R^n \times \R^m$, we have
\begin{align}
\label{eq:config:MassHpCutoff}
	\langle \nu, F_\delta \rangle
	= \int\limits_{\R^{n+m}} \int\limits_{(\R^n)^k} \wh{F}_\delta(\gamma)
	\wh{\mu}(- \kappa - \xi_1 - \dotsb - \xi_k) \prod_{j=1}^k \wh{\mu}(\xi_j)
	J_\theta( \bfxi ) \dbfxi \mathrm{d}\gamma.
\end{align}

We assume that $\chi$, $\Xi$ have been chosen 
so that their Fourier transforms
are supported on centered balls of radius $1$,
which is certainly possible.
Recalling~\eqref{eq:config:HpCutoff},
we have therefore,
for every $(u,v) \in \R^{n+m-1} \times \R$,
\begin{align}
\label{eq:config:HpCutoffDecay}
	|\wh{F}_\delta \circ R (u,v) | 
	= |\wh{F_\delta \circ R}(u,v)|
	\lesssim L^{n+m-1} \cdot 1_{|u| \leq L^{-1}} \cdot \delta \cdot 1_{|v| \leq \delta^{-1}}.
\end{align}
We next show how to obtain some uniform $\gamma$-decay from the other factor in the integrand 
of~\eqref{eq:config:MassHpCutoff}.
By~\eqref{eq:scheme:FourierDecay} and~\eqref{eq:intg:OscIntgDecay}, 
and since $\beta \leq n \leq m$, we have
\begin{align*}
	\notag
	&\phantom{= .} | \wh{\mu}( \kappa + \xi_1 + \dots + \xi_k ) |
	\textstyle \prod_{j=1}^k |\wh{\mu}(\xi_j)| |J_\theta(\bfxi)|
	\\
	\notag
	&\lesssim_{\alpha} ( 1 + |\kappa + \xi_1 + \dotsb + \xi_k | )^{-\tfrac{\beta}{2}}
	\textstyle \prod_{j=1}^k ( 1 + |\xi_j| )^{-\tfrac{\beta}{2}} ( 1 + |\bfA^\transp \bfxi + \theta| )^{-\tfrac{m}{2}} 
	\\
	\notag
	&\lesssim_{\alpha} ( 1 + |\kappa + \xi_1 + \dotsb + \xi_k |
	+ |\xi_1| + \dotsb + |\xi_k| + |\bfA^\transp \bfxi + \theta| )^{-\tfrac{\beta}{2}}.
\end{align*}
Using the above in cunjunction with the triangle inequality
and the decompositions 
$\theta = ( \bfA^\transp \bfxi + \theta ) - \sum_{j=1}^k A_j^\transp \xi_j$
and $\kappa = (\kappa + \xi_1 + \dotsb + \xi_k) - \sum_{j=1}^k \xi_j$,
we deduce that
\begin{align}
	\notag
	&\phantom{= .} | \wh{\mu}( \kappa + \xi_1 + \dots + \xi_k ) |
	\textstyle \prod_{j=1}^k |\wh{\mu}(\xi_j)| |J_\theta(\bfxi)|
	\\
	\notag
	&\lesssim_{\alpha} ( 1 + |\kappa| + |\theta| )^{-\tfrac{\beta}{2}}
	\\
	\label{eq:config:MeasureDecay}
	&\asymp ( 1 + |\gamma| )^{-\tfrac{\beta}{2}}.
\end{align}

Let $\eps \in (0,1)$ be the small parameter in the
statement of the proposition.
At this point we have two parametrizations
$\gamma = (\kappa,\theta) = R(u,v)$ with 
$(\kappa,\theta) \in \R^n \times \R^m$, $(u,v) \in \R^{n+m-1} \times \R$.
By integrating in $(u,v)$-coordinates in~\eqref{eq:config:MassHpCutoff},
and bounding $\wh{F}_\delta(\gamma)$ via~\eqref{eq:config:HpCutoffDecay}, 
we obtain
\begin{align*}
	|\langle \nu, F_\delta \rangle| 
	&\lesssim
	\int_{\R^{n + m - 1} \times \R} 1_{|u| \leq L^{-1}} \cdot L^{n+m-1} \cdot \delta \cdot 1_{|v| \leq \delta^{-1}}
	\\
	&\phantom{\lesssim \int} \bigg[ \int_{(\R^n)^k} |\wh{\mu}( \kappa + \xi_1 + \dotsb + \xi_k )|
	\prod_{j=1}^k |\wh{\mu}(\xi_j)| |J_\theta(\bfxi)| \dbfxi \bigg] \du \dv \\
\end{align*}
By pulling out an $\eps$-th power of the inner integrand 
and using~\eqref{eq:config:MeasureDecay}, we infer that
\begin{align*}
	&\phantom{=} |\langle \nu, F_\delta \rangle| 
	\\
	&\lesssim_\alpha
	\int_{\R^{n+m-1}} L^{n+m-1} \cdot 1_{ |u| \leq L^{-1} } 
	\int_{\R} \delta \cdot 1_{ |v| \leq \delta^{-1} } \cdot ( 1 + |(u,v)| )^{- \tfrac{\eps\beta}{2} } \\
	&\phantom{\lesssim_\alpha \int} \bigg[ \int_{(\R^n)^k} |\wh{\mu}( \kappa + \xi_1 + \dotsb + \xi_k )|^{1-\eps}
	\prod_{j=1}^k |\wh{\mu}(\xi_j)| ^{1-\eps} |J_\theta(\bfxi)|^{1-\eps} \dbfxi \bigg] \du \dv 
	\\
	&\lesssim \sup\limits_{ (\kappa,\theta) \in \R^n \times \R^m} 
	\Lambda^*(\, |T^\kappa \wh{\mu}|^{1-\eps}, |\wh{\mu}|^{1-\eps}, \dots, |\wh{\mu}|^{1-\eps} ; |J_\theta|^{1-\eps} \,)
	\times \delta \int_{ |v| \leq \delta^{-1} } (1 + |v|)^{ - \tfrac{\eps\beta}{2}} \dv.
\end{align*}
The supremum above is finite by Proposition~\ref{thm:intg:SingIntgBounds},
and for $\eps$ small enough, the last factor is bounded by
$\delta^{\eps\beta/2}$. 
Therefore $\langle \nu, F_\delta \rangle \rightarrow 0$ as $\delta \rightarrow 0$,
as was to be shown.
\end{proof}

\smallskip
\textit{Proof of Proposition~\ref{thm:scheme:ConfigMeasure}.}
It suffices to combine Propositions~\ref{thm:config:nuDef}--~\ref{thm:config:nuZeroMassHp},
recalling that we assumed~\eqref{eq:scheme:DimensionConditions}
in this section.
\qed

\section{Absolutely continuous estimates}
\label{sec:abs}

In this section we verify that absolutely continuous
estimates are available when the shifts in~\eqref{eq:scheme:ShiftsPattern}
are given by polynomial vectors and the singular integral converges.
We work with the notation of abstract shift functions.

The strategy, as in the regularity proof of Roth's theorem~\cite{Tao:U2RegLemma}, 
is to use the $U^2$ arithmetic regularity lemma 
to decompose a non-negative bounded function into
an almost-periodic component, an $L^2$ error,
and a part which is Fourier-small.
The precise version of the regularity lemma 
that we need is found in Appendix~\ref{sec:reg}.
To neglect the contribution of Fourier-small functions, 
we use the fact that the counting operator is controlled
by the Fourier $L^\infty$ norm for bounded functions, 
in the sense of Proposition~\ref{thm:scheme:OpFourierControl}.
To show that the pattern count for almost-periodic functions is high,
we need uniform lower bounds for certain Bohr sets of almost-periods,
the proof of which will occupy subsequent parts of this section.
We define a Bohr set of $\T^n$
of frequency set $\Gamma \subset \Z^n$,
radius $\delta \in \big( 0,\frac{1}{2} \big]$ and dimension $d = |\Gamma| < \infty$ by
\begin{align}
\label{eq:abs:BohrSetDef}
	B = B( \Gamma, \delta )
	= \{ x \in \T^n \,:\, \| \xi \cdot x \| \leq \delta \quad \forall\,\xi \in \Gamma \}.
\end{align}
We first prove the following conditional version 
of Proposition~\ref{thm:scheme:AbsContEst}.

\begin{proposition}
\label{thm:abs:AbsContEst}
Suppose that $m > (k-1)n$ and, 
uniformly for every Bohr set $B$ of $\T^n$
of dimension $d$ and radius $\delta > 0$,
\begin{align*}
	\leb\big\lbrace\, y \in [-c,c]^m \,:\, \varphi_1(y),\dots,\varphi_k(y) \in B \,\big\rbrace \gtrsim_{d,\delta} 1.
\end{align*}
Then for every function	$f \in \bump{n}$
supported on $\cube{8}$
such that $0 \leq f \leq 1$
and $\int f = \tau \in (0,1]$, we have
\begin{align*}
	\Lambda(f,\dots,f) \gtrsim_{\tau} 1.
\end{align*}
\end{proposition}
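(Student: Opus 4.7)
The plan is to apply the $U^2$ arithmetic regularity lemma from Appendix~\ref{sec:reg} to decompose $f = f_1 + f_2 + f_3$, where $f_1$ is a structured function approximately invariant under translations by a Bohr set $B = B(\Gamma,\delta)$ of $\T^n$ with dimension $d = |\Gamma|$ and radius $\delta \in \bigl(0,\tfrac{1}{2}\bigr]$, the term $f_2$ is an $L^2$ error with $\|f_2\|_2 \leq \eps$, and $f_3$ is Fourier-uniform with $\|\wh{f}_3\|_\infty \leq \eta$. We arrange that $\int f_1 = \tau$, the three components are uniformly bounded by $1$, and all have support in $\cube{4}$ (after a harmless spatial cutoff, since $f$ itself is supported in $\cube{8}$). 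The parameters $\eps$ and $\eta$ will be chosen at the end, and crucially the regularity lemma permits $\eta$ to be taken as small as we like relative to $\eps$, $d$, and $\delta$.

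Multilinearity of $\Lambda$ expands $\Lambda(f,\dots,f)$ into $3^{k+1}$ terms of the form $\Lambda(g_0,\dots,g_k)$ with each $g_j \in \{f_1, f_2, f_3\}$. I would split these into three classes. First, terms in which at least one argument equals $f_3$ are controlled by Proposition~\ref{thm:scheme:OpFourierControl}: the resulting bound
\[
	|\Lambda(g_0,\dots,g_k)| \lesssim \prod_{j=0}^k \|\wh{g}_j\|_\infty^\eps \|g_j\|_\infty^{1-\eps} \lesssim \eta^\eps
\]
uses the uniform boundedness of the $g_j$ and the Fourier smallness of $f_3$. Second, terms with at least one $f_2$ and no $f_3$ are handled by singling out an index $j$ for which $g_j = f_2$ and applying Cauchy-Schwarz in $x$ (after the translation $x \mapsto x - \varphi_j(y)$ to move $f_2$ onto the origin): the remaining product of bounded compactly-supported factors is controlled in $L^2(\dx)$ uniformly in $y$, so the contribution is $O(\|f_2\|_2) = O(\eps)$. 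Summing, the errors contributed by classes one and two are $O(\eta^\eps + \eps)$.

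The main step is a lower bound on $\Lambda(f_1,\dots,f_1)$. For $y$ with $\varphi_1(y),\dots,\varphi_k(y) \in B$, the Bohr invariance of $f_1$ gives $|f_1(x+\varphi_j(y)) - f_1(x)| \leq \delta'$ pointwise, where $\delta'$ can be made as small as desired by shrinking $\delta$; telescoping yields
\[
	\Lambda(f_1,\dots,f_1) \geq \Bigl(\int_{[-c,c]^m \cap\{\varphi_j(y)\in B\,\forall j\}} \psi(y)\dy\Bigr)\cdot \int_{\R^n} f_1(x)^{k+1}\dx - O(\delta').
\]
The Bohr factor is $\gtrsim_{d,\delta} 1$ by the hypothesis of the proposition, and since $f_1$ has mean $\tau$ on a support of bounded measure, Hölder gives $\int f_1^{k+1} \gtrsim \tau^{k+1}$. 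Choosing first $\delta$ small enough so that the $O(\delta')$ error is dominated by the Hölder main term, then $\eps$ small enough to absorb the $L^2$ class, and finally $\eta$ small enough (depending on $\eps,d,\delta$) to absorb the Fourier-uniform class yields $\Lambda(f,\dots,f) \gtrsim_\tau 1$.

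The principal obstacle is parameter management: the Bohr lower bound is allowed to depend on $d$ and $\delta$ in an arbitrarily bad way, so one must ensure the regularity lemma is applied in a form where the smallness threshold $\eta$ for the Fourier-uniform piece may be specified as an arbitrary function of $\eps, d, \delta$, as is standard for $U^2$ regularity decompositions. A secondary technical point is ensuring the three components of the decomposition are sufficiently regular (smooth and compactly supported in $\cube{2}$) to feed directly into Proposition~\ref{thm:scheme:OpFourierControl}; this can be secured by mollifying and multiplying by a fixed bump that equals $1$ on the support of $f$, absorbing the resulting negligible errors into the $L^2$ component.
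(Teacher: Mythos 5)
Your overall architecture (regularity decomposition, Fourier control of the $f_3$ terms, Cauchy--Schwarz for $f_2$, almost-periodicity of $f_1$ on the Bohr preimage set) matches the paper's, but there is a genuine gap in the order in which you extract the $L^2$ error, and it makes your final parameter choice circular. You bound every term containing $f_2$ globally by $O(\|f_2\|_2)=O(\eps)$, and only afterwards restrict the $y$-integration to the set $E=\{y\in[-c,c]^m: \varphi_1(y),\dots,\varphi_k(y)\in B\}$ in the main term. Your final inequality therefore has the shape
\begin{align*}
	\Lambda(f,\dots,f) \;\geq\; c(d,\delta)\,\tau^{k+1} \;-\; C\eps \;-\; C\eta^{\eps_0},
\end{align*}
and you propose to finish by taking ``$\eps$ small enough to absorb the $L^2$ class.'' But small enough compared to what? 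You need $C\eps \leq \tfrac{1}{2}c(d,\delta)\tau^{k+1}$, where $c(d,\delta)$ is the (completely uncontrolled) constant in the hypothesis $\leb(E)\gtrsim_{d,\delta}1$, and $d$ and $\delta$ are \emph{outputs} of the regularity lemma applied with input $\eps$: the lemma only guarantees $d\lesssim_{\eps,\kappa}1$ and $\delta\gtrsim_{\eps,\kappa}1$, with $d\to\infty$ and $\delta\to 0$ as $\eps\to 0$. So $c(d,\delta)$ may decay in $\eps$ faster than $\eps$ itself, and the required inequality need not be satisfiable for any $\eps$. The same objection applies to your telescoping error $O(\delta')$ if it is left additive rather than multiplied by $\leb(E)$. (The $\eta$ term is fine, because the regularity lemma lets $\eta=\kappa(\eps,d^{-1},\delta)$ be an arbitrary function of $d$ and $\delta$, so it genuinely can be chosen after $d,\delta$ are known.)

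The fix — and this is exactly how the paper's proof is arranged — is to restrict to $E$ \emph{before} removing $f_2$. Write $f=g+f_3$ with $g=f_1+f_2\geq 0$, discard the $f_3$ terms as you do, then use non-negativity of $g$ and $\psi$ to bound $\Lambda(g,\dots,g)\geq \int_E\big(\int_{\R^n} g\cdot T^{\varphi_1(y)}g\cdots T^{\varphi_k(y)}g\big)\dy$, and only then apply Cauchy--Schwarz in $x$ for each fixed $y\in E$. The per-$y$ error is $O(\eps)$, so the total becomes $\leb(E)\cdot(\tau^{k+1}-O(\eps))$ rather than $\leb(E)\cdot\tau^{k+1}-O(\eps)$. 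Now $\eps=c\tau^{k+1}$ works independently of $d$ and $\delta$, and the $\kappa$-dependence closes the argument. Without this reordering the proof does not close.
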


\begin{proof}
We let $\kappa : (0,1]^3 \rightarrow (0,1]$ be a decay function
and $\eps \in (0,1]$ be a parameter, both to be determined later.
Write the decomposition of Proposition~\ref{thm:reg:RegLemma}
with respect to $\eps,\kappa$
as $f = f_1 + f_2 + f_3 = g + f_3$.
Note that $f_1,g \geq 0$ and
$f_1,f_2,f_3,g$
are supported in $\cube{4}$ and
uniformly bounded by $2$ in absolute value.
Expanding $f = g + f_3$ by multilinearity, 
and using Proposition~\ref{thm:scheme:OpFourierControl}
together with the Fourier bound on $f_3$ in~\eqref{eq:reg:RegLemmaEqs},
we obtain
\begin{align}
	\notag
	\Lambda(f,\dots,f) 
	&= \Lambda(g,\dots,g)
	 + O( \, \textstyle \sum \Lambda( * , \dots, f_3 , \dots, * ) \, ) \\
	 \label{eq:abs:FirstRegDcp}
	&= \Lambda(g,\dots,g) + O( \kappa( \eps, d^{-1}, \delta )^{\eps'} ),
\end{align}
for a certain $\eps' \in (0,1)$ depending at most on $n,k,m$.
Recall that we assumed that $\psi$ is at least $1$ 
on a box $[-c,c]^m$ in Section~\ref{sec:scheme}, and let
\begin{align}
\label{eq:abs:EDef}
	E = \big\lbrace\, y \in [-c,c]^m \,:\, \varphi_1(y),\dots,\varphi_k(y) \in B \,\big\rbrace,
\end{align}
where $B$ is the Bohr set of Proposition~\ref{thm:reg:RegLemma}.
For reasons that shall be clear later, we first restrict
integration to the set $E$, 
using the non-negativity of $g$:
\begin{align*}
	\Lambda(g,\dots,g)
	\geq \int_E \bigg( \, \int_{\R^n} g \cdot T^{\varphi_1(y)} g \cdots T^{\varphi_k(y)} g \, \dleb \bigg) \dy.
\end{align*}
Next, we focus on the decomposition $g = f_1 + f_2$ 
and exploit the $L^2$ bound on $f_2$ in~\eqref{eq:reg:RegLemmaEqs}
by Cauchy-Schwarz in the inner integral:
\begin{align*}
	\Lambda(g,\dots,g)
	&\geq \int_E \bigg( \, \int_{\R^n} g \cdot T^{\varphi_1(y)} g \cdots T^{\varphi_k(y)} g \, \dleb \bigg) \dy \\
	&\geq \int_E \bigg( \, 
			\int_{\R^n} f_1 \cdot T^{\varphi_1(y)} f_1 \cdots T^{\varphi_k(y)} f_1 \, \dleb
			- \sum \int_{\R^n} * \cdots T^{\varphi_j(y)} f_2 \cdots * \, \dleb \bigg) \dy \\
	&\geq \int_E \bigg( \, \int_{\R^n} 
			f_1 \cdot T^{\varphi_1(y)} f_1 \cdots T^{\varphi_k(y)} f_1 \, \dleb - O(\eps) \bigg) \dy .
\end{align*}
Finally, we use the almost-periodicity estimate for $f_1$ in~\eqref{eq:reg:RegLemmaEqs} 
and the definition~\eqref{eq:abs:EDef} of $E$
to replace the shifts of $f_1$ by itself:
\begin{align*}
	\Lambda(g,\dots,g)
	\geq \int_E \bigg( \, \int_{[-\frac{1}{2},\frac{1}{2}]^n} f_1^{k+1} \dleb - O( \eps ) \bigg) \dy
\end{align*}
By nesting of $L^p\big(\cube{2}\big)$ norms and non-negativity of $f_1$, we infer that
\begin{align*}
	\Lambda(g,\dots,g)
	&\geq \int_E \bigg( \, \bigg( \int_{[-\frac{1}{2},\frac{1}{2}]^n} f_1 \dleb \bigg)^{k+1} - O( \eps ) \bigg) \dy \\
	&= \leb(E) \cdot ( \tau^{k+1} - O( \eps ) ).
\end{align*}
Choosing $\eps = c \tau^{k+1}$ with a small $c > 0$, 
and recalling~\eqref{eq:abs:FirstRegDcp}
and the assumption on $E$, we obtain
\begin{align*}
	\Lambda(f,\dots,f) \geq c(\delta,d^{-1}) \tau^{k+1} - O( \kappa(c\tau^{k+1},d^{-1},\delta)^{\eps'} ).
\end{align*}
Choosing $\kappa(\eps,d^{-1},\delta) = c' \cdot \big( c(\delta,d^{-1}) \eps \big)^{1/\eps'}$, 
we obtain
\begin{align*}
	\Lambda(f,\dots,f) \geq \tfrac{1}{2} c(\delta,d^{-1}) \tau^{k+1}
	\gtrsim_{\tau} 1,
\end{align*}
recalling that $d,\delta^{-1} \lesssim_{\eps,\kappa} 1 \lesssim_{\tau} 1$.
\end{proof}

It remains to determine a lower bound on the measure 
of the intersection of preimages of a Bohr set 
by the shift functions.
This can be done when the shift functions are
polynomial vectors,
by reduction to a known diophantine
approximation problem,
and in fact there will be a series of 
intermediate reductions.
We let $d$ denote the $L^\infty$ metric on $\R^n$ or $\R$ 
and we define
\begin{align*}
	\| x \|_{\T^n} = d( x , \Z^n ) = \max\limits_{1 \leq i \leq n} d(x_i,\Z)
\end{align*}
for $x \in \R^n$.
In all subsequent propositions in this section
we also liberate the letters $n$, $k$, $m$ 
from their usual meaning,
and we indicate the dependencies of implicit constants
in all parameters.
Our objective is to prove the following statement.

\begin{proposition}
\label{thm:abs:LowBoundBohrPolVecs}
	Let $t,m,n,\ell,d \geq 1$.
	Let $Q_1,\dots,Q_t : \R^m \rightarrow \R^n$ be polynomial vectors
	with components of degree at most $\ell$,
	and such that $Q_i(0) = 0$ for all $i \in [t]$.
	For $\xi_1,\dots,\xi_d \in \R^n$, we have
	\begin{align*}
		\leb\big\lbrace\, y \in [-c,c]^m \,:\, \| Q_i(y) \cdot \xi_j \|_{\T} < \eps 
		\quad\forall (i,j) \in [t] \times [d] \,\big\rbrace \gtrsim_{\,\eps,\ell,m,t,d,n} 1.
	\end{align*}
\end{proposition}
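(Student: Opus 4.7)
The plan is to reduce the statement to a uniform diophantine approximation estimate for real polynomials, and then prove that estimate by induction on the polynomial degree.

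First, I would observe that for each $(i,j) \in [t] \times [d]$, the scalar function $y \mapsto Q_i(y) \cdot \xi_j$ is a real polynomial in $(y_1, \dots, y_m)$ of total degree at most $\ell$, vanishing at the origin (since $Q_i(0) = 0$). Setting $N := td$ and relabeling these scalar polynomials as $P_1, \dots, P_N \in \R[y_1, \dots, y_m]$, the claim reduces to the uniform bound
\begin{align*}
    \leb\bigl\{\, y \in [-c,c]^m : \|P_i(y)\|_\T < \eps \ \forall\, i \in [N] \,\bigr\} \gtrsim_{\eps, \ell, m, N} 1,
\end{align*}
valid for \emph{every} tuple of polynomials $(P_i)$ of degree $\leq \ell$ with zero constant term, with implicit constant independent of the polynomials themselves. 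This is a classical uniform estimate reflecting the equidistribution of real polynomials modulo one.

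Next, I would establish this uniform bound by induction on the degree $\ell$. The case $\ell = 0$ is trivial. For the inductive step, I would fix a threshold $T = T(\eps, \ell, m, N) > 0$ and dichotomize according to whether the degree-$\ell$ Taylor coefficients of the $P_i$'s are all bounded by $T$ or not. In the bounded regime, the rescaling $y = \eta z$ with $\eta = \eta(\eps, \ell, T)$ sufficiently small renders the degree-$\ell$ contributions pointwise $< \eps/2$ on $z \in [-1,1]^m$, reducing the problem to polynomials of degree at most $\ell - 1$ with zero constant term on the rescaled box, which the inductive hypothesis handles. In the unbounded regime, I would expand $\mathbf{1}_{\|z\|_\T < \eps}$ in Fourier series on $\T$ and bound the resulting oscillatory integrals $\int_{[-c,c]^m} e^{2\pi i \sum_i k_i P_i(y)} \dy$ for $k \in \Z^N \smallsetminus \{0\}$ via a multivariate van der Corput / Weyl estimate, using the large top-degree coefficient to extract enough oscillation so that the Fourier zero-mode contribution $(2\eps)^N (2c)^m$ dominates the remaining terms.

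The principal obstacle is the Fourier-analytic step when the polynomials $P_i$ exhibit $\Z$-linear dependencies: a nonzero $k \in \Z^N$ with $\sum_i k_i P_i \equiv 0$ yields an oscillatory integral equal to $(2c)^m$, which cannot be controlled by oscillation alone. I would circumvent this by first passing to a $\Q$-linearly independent subfamily of $(P_i)$ modulo constants, noting that any $\Z$-linear dependence $\sum_i a_i P_i \equiv 0$ makes one constraint $\|P_{i_0}(y)\|_\T < \eps$ a consequence (up to a multiplicative factor $\sum_{i \neq i_0} |a_i/a_{i_0}|$ depending on the dependency, but not on the $P_i$ themselves beyond this count) of the constraints on the remaining polynomials, so the problem reduces without loss of generality to an independent subfamily. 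On such an independent family the coefficients of $\sum_i k_i P_i$ scale linearly in $|k|$, and a multivariate van der Corput estimate provides decay $O(|k|^{-c(\ell, m)})$ for the oscillatory integrals, sufficient to close the Fourier sum and complete the induction.
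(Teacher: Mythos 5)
Your opening reduction (replacing each $Q_i(y)\cdot\xi_j$ by a scalar polynomial with zero constant term, so that the claim becomes a uniform recurrence estimate for an arbitrary tuple $P_1,\dots,P_N$ of real polynomials of degree at most $\ell$) is fine, and the target statement you isolate is indeed the true classical core of the proposition. The gap is in your proof of that core, specifically in the ``unbounded'' branch of the dichotomy. The obstruction there is not exact $\Z$-linear dependence among the $P_i$ but \emph{approximate} dependence, and passing to a $\Q$-linearly independent subfamily does nothing to control it. Take $P_1(y)=My$ and $P_2(y)=(M+\delta)y$ with $M$ huge and $\delta$ tiny: both top coefficients exceed any threshold $T$, the family is $\Q$-linearly independent, yet for $k=(j,-j)$ the combination $\sum_i k_iP_i=-j\delta y$ has arbitrarily small coefficients, so the oscillatory integral is $\approx (2c)^m$ with no decay in $|k|$. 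Your claim that on an independent family ``the coefficients of $\sum_i k_iP_i$ scale linearly in $|k|$'' with constants uniform in the family is false — $\Q$-independence carries no quantitative content — and the resulting near-resonant modes contribute as much as (in aggregate, more than) the zero mode, so the Fourier argument does not close. (A secondary issue: even for an exact dependence $\sum_i a_iP_i\equiv 0$, one cannot simply drop a constraint, since $\|a_{i_0}P_{i_0}\|_{\T}$ small does not imply $\|P_{i_0}\|_{\T}$ small unless $a_{i_0}$ divides the other coefficients.) The correct treatment of these major arcs is exactly the hard part of the classical result: when some small integer combination of the coefficient vectors is nearly integral, one must restrict to a sub-progression (or sub-Bohr set) and run an induction on a complexity parameter, not merely on the degree.

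For comparison, the paper does not attempt to prove this diophantine core at all. It expands each polynomial into monomials and imposes the condition $\|y^I\xi_I\|_{\T^{d_I}}\leq\eps/(n\ell^m)$ monomial by monomial, reduces to one variable by Fubini and induction on $m$, discretizes $y=(n+u)/N$, and then quotes the known simultaneous approximation estimate $N^{-1}\#\{|n|\leq N:\|n^j\alpha_j\|_{\T^{d_j}}\leq\eps\ \forall j\}\gtrsim_{\eps,\ell,(d_j)}1$ from Lyall--Magyar (equivalently Green--Tao, or Baker's book). If you want a self-contained argument along your lines, you would need to import the major-arc/minor-arc induction from the proofs of that cited result; as written, your Case~B does not go through.
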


Our first reduction is to a finite system of conditions
on monomials modulo one.

\begin{proposition}
\label{thm:abs:LowBoundBohrMonom}
	Let $\ell,m \geq 1$ and $X = \{0,\dots,\ell\}^m \smallsetminus \{0\}$. 
	For every $I \in X$, let $d_I \in \Nzero$ and $\xi_I \in \R^{d_I}$.  
	We have\footnote{
	Here and in the sequel we set 
	$\R^0 = \{0\}$ and $\| 0 \|_{\T^0} = 0$
	so that the conditions involving a space $\R^{0}$ are void.}
	\begin{align*}
		\leb\big\lbrace\, y \in [-c,c]^m \,:\, \| y^I \xi_I \|_{\T^{d_I}} \leq \eps 
		\quad\forall I \in X \,\big\rbrace
		\gtrsim_{\,\eps,\ell,m,(d_I)} 1.		
	\end{align*}
\end{proposition}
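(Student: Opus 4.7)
The plan is to induct on $m$ and reduce to the one-variable case, which is handled by Fourier analysis.

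\textbf{Inductive step.} Assume the proposition for $m - 1 \geq 1$. Partition $X \subset \{0, \dots, \ell\}^m \smallsetminus \{0\}$ by the last exponent: $X_k = \{(I', k) \in X\}$ for $0 \leq k \leq \ell$. Writing $y = (y', y_m)$ with $y' \in [-c, c]^{m-1}$, $y_m \in [-c, c]$, and using $y^I = (y')^{I'} y_m^k$ for $I = (I', k)$, the conditions split: those with $I \in X_0$ involve only $y'$ (handled by induction, producing a set $A' \subset [-c, c]^{m-1}$ of measure $\gtrsim 1$), while those with $k \geq 1$ are conditions on $y_m$ with coefficients $(y')^{I'} \xi_I$ depending on $y'$. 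Applying the $m = 1$ case uniformly in these coefficients and integrating over $A'$ via Fubini gives the result.

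\textbf{One-variable case.} We must establish, uniformly in $\xi_i \in \R^{d_i}$,
\[
	\leb \bigl\lbrace y \in [-c, c] : \|y^i \xi_i\|_{\T^{d_i}} \leq \eps \; \forall i \in [\ell] \bigr\rbrace \gtrsim_{\eps, \ell, (d_i)} 1.
\]
I would choose a non-negative trigonometric minorant $K_\eps : \T \to [0, 1]$ of $1_{\|\cdot\|_\T \leq \eps}$ with non-negative Fourier coefficients and $\wh{K}_\eps(0) \asymp \eps$ (for instance the triangle function $\max(0, 1 - \|t\|_\T / \eps)$, whose Fourier coefficients $\sin^2(\pi n \eps) / (\pi^2 n^2 \eps)$ are non-negative). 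Bounding the measure below by $\int_{-c}^c \prod_{i,j} K_\eps(y^i (\xi_i)_j) \dy$ and expanding into Fourier series yields
\[
	\sum_{\bfeta} \bigg( \prod_{i,j} \wh{K}_\eps(\eta_{i,j}) \bigg) \int_{-c}^c e(P_{\bfeta}(y)) \dy, \quad P_{\bfeta}(y) = \sum_{i=1}^\ell (\bfeta_i \cdot \xi_i) y^i.
\]
The contribution of $\bfeta = 0$ is a main term of order $\eps^{\sum d_i}$, and the terms for which $P_{\bfeta} \equiv 0$ give non-negative contributions.

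\textbf{Main obstacle.} The critical challenge is to control the remaining oscillatory contributions, where $P_{\bfeta}$ is a non-trivial polynomial, uniformly in $\xi$. Van der Corput at order $\ell$ gives $|\int e(P_{\bfeta})| \lesssim |\bfeta_\ell \cdot \xi_\ell|^{-1/\ell}$, which degrades when $\bfeta_\ell$ is nearly orthogonal to $\xi_\ell$. I would resolve this by classifying each bad $\bfeta$ according to an ``effective degree'' $k_* \in \{1, \dots, \ell\}$: the largest index at which $|\bfeta_{k_*} \cdot \xi_{k_*}|$ exceeds a threshold depending on $\bfeta$. Truncating $P_{\bfeta}$ above $k_*$ introduces a controlled error, and Van der Corput at degree $k_*$ yields a usable bound on the truncated oscillatory integral. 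A careful dyadic summation over $\bfeta$ using the decay of $\wh{K}_\eps$ at high frequencies then shows that the total error is dominated by a fraction of the main term.
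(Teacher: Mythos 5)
Your inductive step (peeling off the last variable and applying the one-variable case with coefficients $(y')^{I'}\xi_I$ at fixed $y'$, then Fubini) is exactly the reduction the paper uses. The divergence, and the problem, is in the base case $m=1$: the paper does \emph{not} prove the one-variable statement by direct Fourier analysis. It discretizes $y=\tfrac{n+u}{N}$ and reduces to a known simultaneous polynomial recurrence estimate for integers (Proposition~\ref{thm:abs:LowBoundBohrZ}, quoted from Lyall--Magyar / Green--Tao), whose proof in the literature goes through Weyl's inequality or van der Corput differencing with an induction on the degree.

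The gap in your base case is precisely the ``main obstacle'' you flag, and the proposed resolution does not close it. Do the mass accounting: with the Fej\'er minorant, $\sum_{\eta}\wh K_\eps(\eta)=K_\eps(0)=1$ for each of the $\sum_i d_i=:D$ factors, so the total weight of all frequency vectors $\bfeta$ is $1$, while the main term is only $\asymp\eps^{D}$. You therefore need the oscillatory integrals $\int_{-c}^{c}e(P_{\bfeta}(y))\,\dy$ to be $\ll\eps^{D}$ after summing over essentially \emph{all} nonzero $\bfeta$, uniformly in $\xi$ --- or else you need a sign argument. Neither is available in the intermediate regime: for adversarial $\xi$ (e.g.\ all $\xi_{i,j}$ rational with a common small denominator $q$, perturbed slightly), a positive proportion of the $\bfeta$ in the bulk of the Fej\'er weight produce polynomials $P_{\bfeta}$ all of whose coefficients are $O(1)$, where van der Corput gives no decay at any ``effective degree'' and where $\re\int e(P_{\bfeta})$ can be negative. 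Your classification by the largest index with $|\bfeta_{k_*}\cdot\xi_{k_*}|$ above a threshold does not help here, because the obstruction is not near-orthogonality in the top degree but the existence of many $\bfeta$ for which every coefficient is of moderate size; this is the ``major arc'' phenomenon, and overcoming it is essentially the content of the cited recurrence theorem (one must pass to a subprogression / rescale, which a single global Fourier expansion cannot see). As written, the final dyadic summation is asserted rather than performed, and I do not believe it can be performed along these lines. The efficient fix is the paper's: prove the continuous one-variable case by discretization from the integer statement and either cite that statement or prove it by the standard Weyl--van der Corput induction on degree.
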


\smallskip
\textit{Proof that Proposition~\ref{thm:abs:LowBoundBohrMonom}
implies Proposition~\ref{thm:abs:LowBoundBohrPolVecs}.}

We let $X=\{0,\dots,\ell\}^m \smallsetminus \{0\}$ 
and we write $Q_i = \sum_{k \in [n]} Q_{ik} e_k$,
$Q_{ik} = \sum_{I \in X} a_I^{(ik)} y^I$.
For every $I \in X$ we define $d_I = t + d + n$ and
$\xi_I = (a_I^{(ik)} \xi_{jk})_{(i,j,k)} \in \T^{t+d+n}$,
to make the following observation:
\begin{align*}
	&&
	&\phantom{\Leftrightarrow} \| Q_i(y) \cdot \xi_j \|_{\T} \leq \eps 	
	&&\forall (i,j) \in [t] \times [d]
	\\
	&&
	&\Leftrightarrow \textstyle
	\| \sum_{k \in [n]} \sum_{I \in X} a_I^{(ik)} y^I \xi_{jk} \|_{\T} \leq \eps 
	&&\forall (i,j) \in [t] \times [d]
	\\
	&&
	&\Leftarrow
	\| y^I a_I^{(ik)} \xi_{jk} \|_{\T} \leq \frac{\eps}{n\ell^m}  
	&&\forall (i,j,k) \in [t] \times [d] \times [n], \, I \in X
	\\
	&&
	&\Leftrightarrow
	\| y^I \xi_I \|_{\T^{d_I}} \leq \frac{\eps}{n\ell^m}
	&&\forall I \in X.
\end{align*}
Applying Proposition~\ref{thm:abs:LowBoundBohrMonom}
with $\eps \leftarrow \eps / n\ell^m$ and $(d_I,\xi_I)$ as above, 
we find a lower bound on the quantity under study
which depends only on $\eps,\ell,m,t,d,n$.
\qed
\smallskip

Our second reduction consists in a straightforward induction 
which reduces the dimension of the problem to $1$.

\begin{proposition}
\label{thm:abs:LowBoundBohrR}
	Let $\ell \geq 1$ and $d_1,\dots,d_\ell \in \Nzero$,
	$\xi_1 \in \R^{d_1},\dots,\xi_\ell \in \R^{d_\ell}$.
	We have
	\begin{align*}
		\leb\big\lbrace\, y \in [-c,c] \,:\, \| y^j \xi_j \|_{\T^{d_j}} \leq \eps 
		\quad\forall j \in [\ell] \,\big\rbrace
		\gtrsim_{\,\eps,\ell,(d_i)} 1 .
	\end{align*}
\end{proposition}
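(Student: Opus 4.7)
My approach would be by induction on $\ell$, after the preliminary substitution $y' = y/c$, which reduces to $c = 1$ at the cost of replacing each $\xi_j$ by $c^j \xi_j$ (this leaves the form of the statement invariant, and the Lebesgue measure scales by $c$).

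The base case $\ell = 1$ is a uniform simultaneous Diophantine approximation statement: for any $\xi \in \R^{d_1}$,
\begin{align*}
	\leb\{y \in [-1,1] : \|y\xi\|_{\T^{d_1}} \leq \eps\} \gtrsim_{\eps, d_1} 1.
\end{align*}
I would prove this by pigeonhole: from $N \sim \eps^{-d_1}$ equally spaced candidates in $[-1,1]$, two must map to the same $\eps$-cube of $\T^{d_1}$ under $y \mapsto y\xi$, yielding a ``quasi-period'' $y^* \in (0,2]$ satisfying $\|y^* \xi\|_{\T^{d_1}} \leq \eps$. A counting argument using continuity of $y \mapsto y\xi$ on intervals of radius $\sim \eps / |\xi|$ around multiples of $y^*$ in $[-1,1]$ (of which there are $\gtrsim 1/|y^*|$) then yields the uniform lower bound.

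For the inductive step from $\ell - 1$ to $\ell$, I would first invoke the inductive hypothesis to produce a set $A \subset [-1,1]$ of measure $\gtrsim 1$ satisfying the first $\ell - 1$ conditions. To impose $\|y^\ell \xi_\ell\|_{\T^{d_\ell}} \leq \eps$, I would localize on sub-intervals $[y_0 - \delta, y_0 + \delta]$ with $y_0 \in A$, write $y = y_0 + t$, and Taylor-expand $y^\ell = y_0^\ell + \ell y_0^{\ell-1} t + O(t^2)$ to linearize the $\ell$-th condition in $t$. The base case then applies to the effective linear condition $\|y_0^\ell \xi_\ell + \ell y_0^{\ell-1} t \xi_\ell\|_{\T^{d_\ell}} \lesssim \eps$ in the variable $t$, producing a positive-measure subset of each sub-interval. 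The lower-order conditions are preserved within the sub-interval via the continuity of $y \mapsto y^j \xi_j$ combined with the base-case guarantee.

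The main obstacle will be controlling the Taylor remainder $O(|\xi_\ell| t^2)$ uniformly in $\xi_\ell$, which is otherwise unbounded. This can be resolved by a dichotomy on $|\xi_\ell|$: when $|\xi_\ell|$ is bounded by a constant depending only on $\eps, \ell, (d_i)$, a direct continuity argument around $y = 0$ gives a neighbourhood on which all conditions hold simultaneously; when $|\xi_\ell|$ is large, one works at the natural Diophantine scale $\delta \sim |\xi_\ell|^{-1/\ell}$, at which the quadratic Taylor error becomes $O(1)$ relative to the threshold $\eps$, and many good centres $y_0 \in A$ produce local contributions that sum (via the base case) to a uniform positive lower bound independent of $\xi_\ell$.
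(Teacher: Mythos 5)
Your proposal takes a genuinely different route from the paper: the paper does not prove this proposition from scratch, but reduces it by a short discretization argument (writing $y=(n+u)/N$ and letting $N\to\infty$) to the known integer statement of Proposition~\ref{thm:abs:LowBoundBohrZ}, which is quoted from the literature. You instead attempt a self-contained induction on the degree $\ell$, and as sketched this contains several genuine gaps.

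First, in the base case, the quasi-period $y^*$ obtained by pigeonhole satisfies $\|y^*\xi\|_{\T^{d_1}}\leq\eps$, but its $k$-th multiple only satisfies $\|ky^*\xi\|_{\T^{d_1}}\leq k\eps$, so the intervals around the multiples of $y^*$ do not all meet the threshold $\eps$; the standard fix is to take differences $y-y_1$ of all points of a popular $\eps$-cube preimage with one \emph{fixed} such point, which directly gives a subset of $[-1,1]$ of measure $\gtrsim\eps^{d_1}$ inside the Bohr set. Second, in the inductive step, the Taylor remainder at your chosen scale $\delta\sim|\xi_\ell|^{-1/\ell}$ is $O(|\xi_\ell|\delta^2)=O(|\xi_\ell|^{1-2/\ell})$, which is unbounded for $\ell\geq3$ and of size $O(1)$ (not $O(\eps)$) for $\ell=2$; one is forced down to $\delta\lesssim(\eps/|\xi_\ell|)^{1/2}$. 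Third, and most seriously, the claim that the lower-order conditions are ``preserved within the sub-interval via continuity'' fails: preserving $\|y^j\xi_j\|_{\T^{d_j}}\leq 2\eps$ near a good centre $y_0$ requires $|t|\lesssim\eps/|\xi_j|$, and when some $|\xi_j|$ with $j<\ell$ is vastly larger than $|\xi_\ell|$ (e.g.\ $\ell=2$, $\xi_1=M^{100}$, $\xi_2=M$), these windows are far smaller than any scale at which the linearized $\ell$-th condition can be solved. On a sub-interval of the correct length for the $\ell$-th condition, the inductive set $A$ and the set produced by the base case are each only a small proportion of the sub-interval, and nothing in the argument forces them to intersect; establishing that intersection in general is essentially the content of the theorem (a uniform quantitative Weyl-type statement), which is precisely what the cited integer result supplies and what the paper's discretization transfers to the continuous setting.
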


\textit{Proof that Proposition~\ref{thm:abs:LowBoundBohrR} implies
Proposition~\ref{thm:abs:LowBoundBohrMonom}.}

We induct on $m \geq 1$, the case $m = 1$ being
precisely Proposition~\ref{thm:abs:LowBoundBohrR}.
Assume that we have proven the estimate for dimensions less than or equal to $m$,
and write a tuple $I \in \{ 0,\dots,\ell\}^{m+1} \smallsetminus \{0\}$
as $I = (J,i_{m+1})$ with $J \in \{0,\dots,\ell\}^m$ 
and $i_{m+1} \geq 0$.
We distinguish the conditions
involving $y_{m+1}$ or not by Fubini:
\begin{align*}
	&\phantom{=} 
	\leb\big\lbrace\, y \in [-c,c]^{m+1} \,:\, 
	\| y^I \xi_I  \|_{\T^{d_I}} \leq \eps 
	\quad\forall\, I \in X\,  \big\rbrace 
	\\
	&= \int_{[-c,c]^{m+1}}
	1\Big[\, \| y^J y_{m+1}^{i_{m+1}} \xi_I  \|_{\T^{d_I}} \leq \eps  
	\quad\forall\, (J,i_{m+1}) = I \in X \,\Big] \dy_1 \dots \dy_m \dy_{m+1} 
	\\
	&= \int_{[-c,c]^m} 
	1\Big[\, \| y^J \xi_I  \|_{\T^{d_I}} \leq \eps 	
	\quad\forall (J,0) = I \in X \,\Big] 
	\\
	&\quad \int_{[-c,c]}
	1\Big[\, \| y_{m+1}^{i_{m+1}} \cdot y^{J} \xi_I  \|_{\T^{d_I}} \leq \eps 	
	\quad\forall (J,i_{m+1}) = I \in X\,:\, i_{m+1} \geq 1 \,\Big] 
	\dy_{m+1}\, \dy_1 \dots \dy_m.
\end{align*}
By first applying the induction hypothesis with $m=1$ at fixed $y_1,\dots,y_m$,
and then by applying another instance of the induction hypothesis, 
we find that this quantity is indeed bounded from below by a positive constant
depending only on $\eps$, $\ell$, $m$ and $(d_I)$.
\qed
\smallskip

Our final reduction is a simple discretization argument
which reduces the problem to the following 
known diophantine approximation 
estimate~\cite[Proposition~B.2]{LM:DiophApprox}
(see also~\cite[Proposition~A.2]{GT:DiophApprox},~\cite[Chapter~7]{Baker:Book}).

\begin{proposition}
\label{thm:abs:LowBoundBohrZ}
	Let $\ell \geq 1$ and $d_1,\dots,d_\ell \in \Nzero$.
	Let $\alpha_1 \in \R^{d_1},\dots,\alpha_\ell \in \R^{d_\ell}$ and $N \geq 1$.
	We have
	\begin{align*}
		N^{-1} \#\big\lbrace |n| \leq N \,:\, \| n^j \alpha_j \|_{\T^{d_j}} \leq \eps 
		\quad\forall j \in [\ell] \big\rbrace
		\gtrsim_{\,\eps,\ell,(d_j)} 1 .
	\end{align*}
\end{proposition}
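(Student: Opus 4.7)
The plan is to induct on $\ell$, following the classical approach to Weyl's theorem on equidistribution of polynomial sequences; all implicit constants may depend on $\eps, \ell, (d_j)$ but never on $N$ or the $\alpha_j$. For the base case $\ell = 1$, we invoke a standard pigeonhole density bound for linear Bohr sets in $\Z$: cover $\T^{d_1}$ by $\lesssim \eps^{-d_1}$ cubes of side $\eps/2$ and distribute the orbit $\{ n \alpha_1 \bmod \Z^{d_1} : 0 \leq n \leq 2N \}$ among them; some cube contains $\gtrsim_{\eps, d_1} N$ of these points, and subtracting them pairwise yields $\gtrsim_{\eps, d_1} N$ integers $n \in [-2N, 2N]$ with $\| n \alpha_1 \|_{\T^{d_1}} \leq \eps$, which after rescaling gives the stated density on $|n| \leq N$.

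For the inductive step, assume the estimate for degree $\ell - 1$. The main tool is a quantitative Weyl-differencing argument. Set $a_n = \mathbf{1}[\| n^j \alpha_j \|_{\T^{d_j}} \leq \eps \text{ for all } j \in [\ell]]$; the count $S = \sum_{|n| \leq N} a_n$ may be related, via van der Corput's inequality or a similar Cauchy--Schwarz-type manipulation, to sums of ``differenced counts'' of the form $\sum_{n} a_{n+h} a_n$. Such a differenced count detects those $n$ simultaneously satisfying the original system and its translate by $h$, hence those $n$ with $\| (n+h)^j \alpha_j - n^j \alpha_j \|_{\T^{d_j}} \leq 2\eps$ for every $j \in [\ell]$. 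Since $(n+h)^\ell \alpha_\ell - n^\ell \alpha_\ell$ is a polynomial of degree $\ell - 1$ in $n$ with coefficients depending on $h$ and $\alpha_\ell$, the differenced system has degree at most $\ell - 1$; for a generic $h$ selected by pigeonhole or averaging against a Fej\'er kernel, its leading coefficients are non-degenerate, and the inductive hypothesis yields a lower bound on the corresponding differenced count. Iterating through the $\ell - 1$ differencing steps and assembling the resulting bounds recovers the desired lower bound on $S$.

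The principal obstacle is the quantitative tracking of constants: each differencing step introduces a Cauchy--Schwarz loss, so the final density constant depends (tower-)exponentially on $\ell$, and one must verify that the ``good'' differencing vector $(h_1, \dots, h_{\ell-1})$ can be selected uniformly in $\alpha_j$ and $N$, with leading coefficients of the differenced polynomials remaining non-degenerate as the induction proceeds. The fully worked-out quantitative argument, spanning several pages, is what justifies the citation to \cite{LM:DiophApprox}, \cite{GT:DiophApprox}, and \cite{Baker:Book}.
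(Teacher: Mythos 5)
The paper does not actually prove this proposition: it is quoted as a known result from \cite[Proposition~B.2]{LM:DiophApprox} (see also \cite[Proposition~A.2]{GT:DiophApprox} and \cite[Chapter~7]{Baker:Book}), so a citation is all that is required here. Your base case $\ell=1$ via pigeonhole and pairwise differencing is correct. However, the inductive step you sketch does not close, and the obstruction is not merely the bookkeeping of constants. The issue is one of direction. Writing $a_n$ for the indicator of the event $\| n^j\alpha_j\|_{\T^{d_j}}\leq\eps$ for all $j$, van der Corput's inequality bounds $S=\sum_n a_n$ from \emph{above} by the autocorrelations $\sum_n a_{n+h}a_n$; it cannot produce a lower bound on $S$. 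One could instead try to lower-bound an autocorrelation directly, since trivially $\sum_n a_{n+h}a_n\leq S$; but the event $a_n=a_{n+h}=1$ only \emph{implies} the differenced condition $\|(n+h)^j\alpha_j-n^j\alpha_j\|_{\T^{d_j}}\leq 2\eps$, it is not implied by it. So the inductive hypothesis, applied to the degree-$(\ell-1)$ differenced system, lower-bounds the measure of a \emph{superset} of $\{n: a_{n+h}a_n=1\}$ and yields no information about $S$. In short, both available inequalities point the wrong way, and no choice of $h$ (generic or otherwise) repairs this; the appeal to non-degeneracy of the leading coefficients is a red herring, since the statement is uniform in all coefficient vectors, including degenerate ones.

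Two further remarks. First, even if the directional issue were resolved, the differenced system consists of general polynomials of degree $\leq\ell-1$ with cross terms in $h$, not monomials $n^j\alpha_j$, so the inductive hypothesis in the form of Proposition~\ref{thm:abs:LowBoundBohrZ} would not apply as stated; one would need the polynomial version, which in the paper is \emph{deduced from} this proposition (Propositions~\ref{thm:abs:LowBoundBohrR}--\ref{thm:abs:LowBoundBohrPolVecs}), so invoking it would be circular unless the induction is set up with general polynomials from the start. Second, the arguments in the cited references proceed quite differently: one runs a quantitative equidistribution dichotomy. Either the orbit $n\mapsto(n^j\alpha_j)_j$ is $\delta$-equidistributed on $[N]$, in which case the count of returns to the box is at least $((2\eps)^{\sum d_j}-\delta)N$; or it is not, in which case Weyl's inequality forces the coefficients into major arcs, and one passes to a subprogression and inducts on degree or dimension (alternatively, Baker's treatment uses a carefully iterated pigeonhole). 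If you intend to supply a self-contained proof rather than a citation, that dichotomy is the missing idea.
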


\textit{Proof that Proposition~\ref{thm:abs:LowBoundBohrZ} implies
Proposition~\ref{thm:abs:LowBoundBohrR}.}

Consider a scale $N \geq 1$ going to infinity.
Write each $|y| \leq c$ as $y = \frac{n+u}{N}$ 
with $n \in \Z$ and $u \in \big( \! -\tfrac{1}{2},\tfrac{1}{2} \big]$,
so that
$y^j = n^j/N^j + O_\ell( 1/ N )$
for every $j \in [\ell]$.
For $N$ large enough with respect to $(\xi_j)$, $\eps$ and $\ell$,
we have therefore
\begin{align*}
	\| y^j \xi_j \|_{\T} \leq \eps
	\quad\Leftarrow\quad 	
	\Big\| n^j \frac{\xi_j}{N^j} \Big\|_{\T} \leq \frac{\eps}{2}.
\end{align*}
This yields:
\begin{align*}
	&\phantom{=}
	\leb\big\lbrace\, y \in [-c,c] \,:\, \| y^j \xi_j \|_{\T^{d_j}} \leq \eps 
	\quad\forall j \in [\ell] \,\big\rbrace  &\phantom{\sum_k}
	\\
	&\geq
	\sum_{ |n| \leq cN / 2 }
	\leb\big\lbrace\, y = \tfrac{n+u}{N} \,:\, |u| \leq \tfrac{1}{2},\ 
	\Big\| n^j \frac{\xi_j}{N^j} \Big\|_{\T^{d_j}} \leq \eps/2 
	\quad\forall j \in [\ell] \,\big\rbrace \\
	&\geq N^{-1}
	\#\big\lbrace |n| \leq cN/2 \,:\, \Big\| n^j \frac{\xi_j}{N^j} \Big\|_{\T^{d_j}} \leq \eps
		\quad\forall j \in [\ell] \,\big\rbrace.
\end{align*}
Applying Proposition~\ref{thm:abs:LowBoundBohrZ} concludes the proof.
\qed
\smallskip

To conclude this section we may now 
derive the absolutely continuous estimates
stated in Section~\ref{sec:scheme}.

\smallskip

\textit{Proof of Proposition~\ref{thm:scheme:AbsContEst}.}
It suffices to combine 
Propositions~\ref{thm:abs:AbsContEst}
and~\ref{thm:abs:LowBoundBohrPolVecs},
recalling the shape~\eqref{eq:scheme:OnePolPattern} of
our shift functions.
\qed

\section{The transference argument}
\label{sec:transf}

This section is concerned with proving that $\Lambda(\mu,\dots,\mu) > 0$,
by the transference argument of \L{}aba and Pramanik~\cite{LP:Configs}
exploiting the pseudorandomness
of the fractal measure $\mu$ as $\alpha \rightarrow n$.
We start by recalling the decomposition of Chan et al.~\cite[Section~6]{CLP:Configs}
of the fractal measure $\mu$
into a bounded smooth part (a mollified version of $\mu$) 
and a Fourier-small part (the difference with the first part).
This is the part of the argument where one lets
$\alpha$ tend to $n$ in a certain sense,
and then the Fourier tail exhibits very strong,
exponential-type decay in $n - \alpha$.

\begin{proposition}
\label{thm:transf:MeasDcp}
There exists a constant $C_1 > 0$ depending at most on $n,D$
and a decomposition $\mu = \mu_1 + \mu_2$,
where $\mu_1 = f\dleb$, 
$f \in \bump{n}$, $0 \leq f \leq C_1$, 
$\int f = 1$, $\Supp f \subset \cube{8}$,
$|\wh{\mu}_i| \leq 2 |\wh{\mu}|$ for $i \in \{ 1 , 2 \}$ and
\begin{align*}
	\| \wh{\mu}_2 \|_\infty
	\lesssim 
	(n - \alpha)^{-O(1)} e^{-\tfrac{\beta}{2+\beta} \cdot \tfrac{1}{n-\alpha}}.
\end{align*}
\end{proposition}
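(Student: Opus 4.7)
The plan is to set $\mu_1 = \mu * \phi_\eps$ for a standard mollifier $\phi_\eps$ at a carefully tuned scale $\eps = \eps(\alpha) \in (0,1)$, and $\mu_2 = \mu - \mu_1$. Specifically, pick once and for all a fixed $\phi \in \cutoff{n}$ with $\phi \geq 0$, $\int \phi = 1$ and $\Supp \phi \subset B(0,1)$, and set $\phi_\eps(x) = \eps^{-n} \phi(x/\eps)$, so that $\mu_1 = f \dleb$ with $f = \mu * \phi_\eps$. I then need to verify three packages of estimates (support and size of $f$; pointwise domination of $\wh{\mu}_i$ by $\wh{\mu}$; uniform decay of $\wh{\mu}_2$) for a single choice of $\eps$ expressed in terms of $n - \alpha$.

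First, the basic properties of $f$. Since $\Supp \mu \subset \cube{16}$ and $\Supp \phi_\eps \subset B(0,\eps)$, for $\eps$ sufficiently small (which happens automatically once $\alpha$ is close to $n$) we have $\Supp f \subset \cube{8}$. By Fubini, $\int f = \int \phi \cdot \int \dmu = 1$. The pointwise estimate
\begin{align*}
	f(x) = \int \phi_\eps(x - y) \dmu(y) \leq \eps^{-n} \| \phi \|_\infty \cdot \mu\big(B(x,\eps)\big)
\end{align*}
combined with the Frostman bound~\eqref{eq:scheme:BallDecay} yields $f(x) \lesssim D \eps^{\alpha - n}$, with an implicit constant depending only on $n$ and the fixed $\phi$.

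For the Fourier bounds, $\wh{\mu}_1 = \wh{\mu} \cdot \wh{\phi}(\eps \cdot)$ and $\wh{\mu}_2 = \wh{\mu} \cdot (1 - \wh{\phi}(\eps\,\cdot\,))$. Since $\phi \geq 0$ and $\int \phi = 1$, we have $|\wh{\phi}| \leq 1$ everywhere, whence $|\wh{\mu}_1| \leq |\wh{\mu}|$ and $|\wh{\mu}_2| \leq 2 |\wh{\mu}|$. To prove uniform decay of $\wh{\mu}_2$ I split by frequency: at low frequencies the trivial bound $|\wh{\mu}(\xi)| \leq 1$ and the Lipschitz estimate $|1 - \wh{\phi}(\eps\xi)| \leq C \eps |\xi|$ (valid since $\wh{\phi}$ is smooth with $\wh{\phi}(0) = 1$) give $|\wh{\mu}_2(\xi)| \lesssim \eps |\xi|$; at high frequencies the hypothesis~\eqref{eq:scheme:FourierDecay} gives $|\wh{\mu}_2(\xi)| \lesssim D_\alpha |\xi|^{-\beta/2}$. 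Optimising the crossover at $|\xi| \asymp (D_\alpha / \eps)^{2/(2+\beta)}$ produces $\| \wh{\mu}_2 \|_\infty \lesssim D_\alpha^{2/(2+\beta)} \eps^{\beta/(2+\beta)}$.

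The crux is then balancing the two constraints by choosing $\eps = \exp\!\big(\! -1/(n-\alpha)\big)$. With this value, $\eps^{\alpha - n} = e$, so $f \leq C_1$ for a constant $C_1$ depending only on $n$ and $D$; simultaneously, $\eps^{\beta/(2+\beta)} = \exp\!\big(\! -\tfrac{\beta}{2+\beta} \cdot \tfrac{1}{n - \alpha}\big)$, and~\eqref{eq:scheme:Dgrowth} absorbs the prefactor $D_\alpha^{2/(2+\beta)}$ into $(n-\alpha)^{-O(1)}$, matching the stated bound exactly. The only delicate point is precisely this trade-off: taking $\eps$ any smaller would violate the $L^\infty$ bound on $f$ because of the fractal concentration of $\mu$, while taking $\eps$ any larger would fail to kill enough high-frequency mass to obtain exponential-type decay; the exponent $\beta/(2+\beta)$ and the threshold scale $\log(1/\eps) \asymp 1/(n - \alpha)$ arise naturally from this optimisation.
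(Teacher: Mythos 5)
Your proposal is correct and follows essentially the same route as the paper: mollify $\mu$ at scale $\eps = e^{-1/(n-\alpha)}$ (the paper writes $L = 1/\eps$), use the Frostman bound to control $\|f\|_\infty \lesssim D\eps^{\alpha-n} = eD$, and balance the Lipschitz bound $|1-\wh{\phi}(\eps\xi)| \lesssim \eps|\xi|$ against the Fourier decay $D_\alpha|\xi|^{-\beta/2}$ at the crossover frequency to get $\|\wh{\mu}_2\|_\infty \lesssim D_\alpha^{O(1)}\eps^{\beta/(2+\beta)}$. The only cosmetic difference is that the paper fixes $\Supp\phi \subset B(0,\tfrac{1}{16})$ so that $\Supp f \subset \cube{8}$ holds for every $L\geq 1$, whereas you need $\eps$ small (i.e.\ $\alpha$ near $n$) for the support claim; rescaling $\phi$ removes even that caveat.
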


\begin{proof}
Let $L \geq 1$ be a parameter.
Consider a cutoff $\phi \in \bump{n}$ such that $\int \phi = 1$,
$\Supp \phi \subset B(0,\tfrac{1}{16})$
and $0 \leq \phi \leq C_0$,
for a certain $C_0 = C_0(n) > 0$,
and define $\phi_L = L^n \phi( L \,\cdot\, )$.
Let $f = \mu \ast \phi_L$
and consider the decomposition $\mu = \mu_1 + \mu_2$ 
with $\mu_1 = f \dleb$ and $\mu_2 = \mu - \mu_1$.
We can already infer that $f \geq 0$, $\int f = 1$, $|\wh{\mu}_i| \leq 2|\wh{\mu}|$
for $i=1,2$ and $\Supp \mu_1 \subset \cube{8}$, 
since we assumed that $E \subset \cube{16}$ in Section~\ref{sec:scheme}.

Next, we show that $f$ is bounded.
Since $\phi_L$ has support in $B(0,\tfrac{1}{16L})$,
by~\eqref{eq:scheme:BallDecay} we have
\begin{align*}
	f(x)
	&= \int_{B(x,\frac{1}{16L})} \phi_L( x - y ) \dmu(y) \\
	&\leq \| \phi_L \|_\infty \cdot \mu\big[ B(x,\tfrac{1}{16L}) \big] \\
	&\leq C_0 D L^{n - \alpha}. \phantom{\int_B}
\end{align*}
Choosing $L = e^{\frac{1}{n-\alpha}}$, 
we deduce that
\begin{align*}
	\| f \|_\infty \leq C_0 D e \eqqcolon C_1.
\end{align*}

Finally, we bound the Fourier transform of $\wh{\mu}_2$. 
Observe that, for every $\xi \in \R^n$,
\begin{align}
\label{eq:transf:Mu2}
	\wh{\mu}_2(\xi) = \wh{\mu}(\xi)\Big(1 - \wh{\phi}\Big(\frac{\xi}{L}\Big) \Big).
\end{align}
Since $\int \phi = 1$, we always have $|1 - \wh{\phi}(\tfrac{\xi}{L})| \leq 2$.
On the other hand, since $\phi$ has support in $B(0,\tfrac{1}{16})$, we have
\begin{align*}
	\Big| 1 - \wh{\phi}\Big( \frac{\xi}{L} \Big) \Big|
	= \bigg\lvert \int_{B(0,\frac{1}{16})} \phi(x) \Big( 1 - e\Big( \frac{\xi \cdot x}{L} \Big) \Big) \dx \bigg\rvert
	\lesssim \frac{|\xi|}{L}.
\end{align*}
By inserting these two last bounds 
in~\eqref{eq:transf:Mu2}, we obtain
\begin{align*}
	|\wh{\mu}_2(\xi)| 
	\lesssim \min\Big( 1, \frac{|\xi|}{L} \Big) |\wh{\mu}(\xi)|.
\end{align*}
Consequently, by~\eqref{eq:scheme:FourierDecay} and~\eqref{eq:scheme:Dgrowth} we have
\begin{align*}
	|\wh{\mu}_2(\xi)| 
	\lesssim 
	(n-\alpha)^{-O(1)}	
	\min\Big( 1, \frac{|\xi|}{L} \Big) 
	\min( 1 , |\xi|^{-\beta/2} ).
\end{align*}
By considering separately the ranges 
$|\xi| \geq L^{2/(2+\beta)}$ and $|\xi| \leq L^{2/(2+\beta)}$, we find that
\begin{align*}
	|\wh{\mu}_2(\xi)| \lesssim (n - \alpha)^{-O(1)} L^{- \tfrac{\beta}{2+\beta}}.
\end{align*}
Recalling our choice of $L$, we have
\begin{align*}
	|\wh{\mu}_2(\xi)| 
	\lesssim 
	(n - \alpha)^{-O(1)} e^{-\tfrac{\beta}{2+\beta} \cdot \tfrac{1}{n-\alpha}}.
\end{align*}
\end{proof}

We now establish the positivity of $\Lambda(\mu,\dots,\mu)$,
using the previous decomposition, 
with the main contribution from the absolutely continuous part
estimated by Proposition~\ref{thm:scheme:AbsContEst},
and the other contributions bounded away by
Proposition~\ref{thm:intg:SingIntgBounds}.

\textit{Proof of Proposition~\ref{thm:scheme:Transference}.}
We consider the decomposition $\mu = \mu_1 + \mu_2$
from Proposition~\ref{thm:transf:MeasDcp}, and expand by
multilinearity in
\begin{align*}
	\Lambda( \mu, \dots, \mu ) =
	C_1^{-(k+1)} \Lambda( \mu_1/C_1, \dots, \mu_1/C_1 ) 
	+ O\big(\, \textstyle \sum \Lambda( *, \dots, \mu_2 , \dots, * ) \,\big),
\end{align*}
where the sum is over $2^{k+1}-1$ terms and the stars
denote measures equal to either $\mu_1$ or $\mu_2$.
By Proposition~\ref{thm:scheme:AbsContEst}, we deduce that
for a certain constant $c > 0$, we have
\begin{align*}
	\Lambda( \mu, \dots, \mu ) \geq
	c - O\big(\, \textstyle \sum \Lambda( *, \dots, \mu_2 , \dots, * ) \,\big).
\end{align*}

By Proposition~\ref{thm:ops:OpBoundBySingIntg},
we have furthermore, for any $\eps \in (0,1)$,
\begin{align*}
	\Lambda( \mu, \dots, \mu )
	\geq c - O\big( \, \| \wh{\mu}_2 \|_\infty^{\eps}
	\, \Lambda^*(\, |\wh{\mu}|^{1-\eps},\dots,|\wh{\mu}|^{1-\eps} ; |J| \,) \, \big).
\end{align*}
By taking $\eps$ to be that appearing in Proposition~\ref{thm:intg:SingIntgBounds},
and inserting the Fourier bound on $\mu_2$ from Proposition~\ref{thm:transf:MeasDcp},  
we find that
\begin{align*}
	\Lambda( \mu, \dots, \mu )
	\geq c - O_{\beta_0}\Big( (n - \alpha)^{-O(1)} e^{- \eps \cdot \tfrac{\beta_0}{2 + \beta_0} \cdot \tfrac{1}{n - \alpha}} \Big),
\end{align*}
where we used the monotonicity of $x/(2+x)$.
This can be made positive for $\alpha \geq n - c(\beta_0,\eps)$
with $c(\beta_0,\eps) > 0$ small enough.
\qed

\section{Revisiting the linear case}
\label{sec:lin}

In this section we indicate how the method
of this article may be modified to obtain
the following extension of Theorem~\ref{thm:intro:CLP},
which allows for any positive exponent of Fourier decay
for the fractal measure.
For simplicity we only treat the case where $n$ divides $m$,
which already covers all the geometric applications
discussed in~\cite{CLP:Configs}.

\begin{theorem}
\label{thm:lin:CLPagain}
Let $n,k,m \geq 1$,
$D \geq 1$ and $\alpha, \beta \in (0,n)$.
Suppose that $E$ is a compact subset of $\R^n$
and $\mu$ is a probability measure supported on $E$
such that
\begin{align*}
	\mu\big[ B(x,r) \big]
	\leq Dr^\alpha
	\quad\text{and}\quad
	|\wh{\mu}(\xi)| \leq D(n-\alpha)^{-D} (1+|\xi|)^{-\beta/2}
\end{align*}
for all $x \in \R^n$, $r >0$ and $\xi \in \R^n$. 
Suppose that $(A_1,\dots,A_k)$ is a non-degenerate system
of $n \times m$ matrices
in the sense of Definition~\ref{thm:intro:NonDegen}.
Assume finally that
$m = (k-r) n$ with $1 \leq r < k$
and, for some $\beta_0 \in (0,n)$,
\begin{align*}
	\frac{k-1}{2} n < m < kn,
	\qquad
	\beta_0 \leq \beta < n,
	\qquad
	n - c_{n,k,m,\beta_0,D,(A_i)} \leq \alpha < n,
\end{align*}
for a sufficiently small constant $c_{n,k,m,\beta_0,D,(A_i)} > 0$.
Then, for every collection of strict subspaces $V_1,\dots,V_q$ of $\R^{n + m}$,
there exists $(x,y) \in \R^{n + m} \smallsetminus V_1 \cup \dots V_q$
such that
\begin{align*}
	( x , x + A_1 y , \dots, x + A_k y ) \in E^{k+1}.
\end{align*}
\end{theorem}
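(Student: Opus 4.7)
The plan is to follow the scheme of Sections~\ref{sec:scheme}--\ref{sec:transf} with $Q \equiv 0$ and $\varphi_j(y) = A_j y$. The oscillatory integral of Definition~\ref{thm:ops:OscIntg} simplifies to $J_\theta(\bfxi) = \wh\psi(-(\bfA^\transp \bfxi + \theta))$, which has Schwartz decay in $\bfA^\transp \bfxi + \theta$. This means that in Proposition~\ref{thm:intg:MainIntgBound} and its hypothesis~\eqref{eq:intg:mEq}, the decay exponent $m'$ may be taken arbitrarily large, leaving only the fiber integrability as the binding constraint on $m$. The Fourier-space expressions (Propositions~\ref{thm:scheme:FourierInv},~\ref{thm:ops:FourierInvOpSmooth}) and the construction of a configuration measure supported on the pattern set with zero mass on hyperplanes (Propositions~\ref{thm:config:nuDef}--\ref{thm:config:nuZeroMassHp}) then carry over essentially verbatim once the singular integral bounds are established.

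The main obstacle is to establish
\begin{align*}
	\sup_{(\kappa,\theta)} \Lambda^*\bigl(\, T^\kappa |\wh\mu|^{1-\eps}, |\wh\mu|^{1-\eps}, \dots, |\wh\mu|^{1-\eps} ; |J_\theta|^{1-\eps} \,\bigr) &< \infty, \\
	\Lambda^*\bigl(\, |\wh\mu|^{1-\eps}, \dots, |\wh\mu|^{1-\eps} ; |J| \,\bigr) &\lesssim_{\beta_0} (n-\alpha)^{-O(1)},
\end{align*}
for some small $\eps > 0$, in the full range $\tfrac{k-1}{2} n < m < kn$ rather than only $m > (k-1)n$. The key modification is this: for $m = (k-r) n$ with $r \leq \lfloor k/2 \rfloor$, one replaces the single-coordinate fiber $\{\xi_j = \eta\}$ of codimension $n$ used in Proposition~\ref{thm:intg:FiberCdt} by a multi-coordinate fiber $\{\xi_{j_1} = \eta_1, \dots, \xi_{j_{k-r+1}} = \eta_{k-r+1}\}$ of codimension $(k-r+1)n$ in $(\R^n)^k$. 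Definition~\ref{thm:intro:NonDegen} asserts exactly that the associated block matrix is invertible, which translates to $\bfA^\transp$ being injective on the resulting $(r-1)n$-dimensional fiber; combined with the rapid decay of $\wh\psi$, this yields a uniform fiber moment bound. The Hölder splitting of Proposition~\ref{thm:intg:IntermIntgBound} is correspondingly upgraded to a $(k-r+1)$-fold split across the chosen indices, with exponents rebalanced as in Corollary~\ref{thm:intg:SystEqs}, so that the estimate again reduces to a single $L^s$ norm of $\wh\mu$. The restriction bound of Appendix~\ref{sec:restr}, which provides $\| \wh\mu \|_{2 + \delta} < \infty$ for arbitrary $\delta > 0$ when $\beta \geq \beta_0$ and $\alpha$ is close to $n$, then closes the estimate.

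With these singular integral bounds in hand, Section~\ref{sec:config} produces the configuration measure $\nu$, and the proof of Proposition~\ref{thm:scheme:OpFourierControl} via~\eqref{eq:intg:OpPullingOut} and Plancherel adapts in the same manner to the extended range of $m$. The diophantine Bohr-set lower bounds of Section~\ref{sec:abs} apply since the linear maps $A_j y$ are polynomial vectors with zero constant term, and Proposition~\ref{thm:scheme:AbsContEst} follows. The transference argument of Section~\ref{sec:transf} is insensitive to the pattern: decomposing $\mu = \mu_1 + \mu_2$ via Proposition~\ref{thm:transf:MeasDcp} and expanding by multilinearity, the main term is bounded below by the absolutely continuous estimate while the error terms are controlled by the exponential Fourier decay of $\wh\mu_2$ paired against the finite singular integral, giving $\Lambda(\mu,\dots,\mu) > 0$ for $\alpha$ sufficiently close to $n$ depending on $\beta_0$. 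The excluded-subspaces condition follows from $\nu$ having zero mass on hyperplanes, exactly as in the proof of Theorem~\ref{thm:intro:MainThm}.
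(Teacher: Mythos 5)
Your overall architecture is the right one and matches the paper's: with $Q\equiv 0$ the oscillatory integral becomes $\wh{\psi}(-(\bfA^\transp\bfxi+\theta))$, which has Schwartz decay, the only genuinely new ingredient is a singular-integral bound valid in the wider range $\tfrac{k-1}{2}n<m<kn$, and Sections~\ref{sec:config}--\ref{sec:transf} then carry over. But your key modification is set up with the wrong slicing, and the resulting estimate cannot be closed. If you fix $k-r+1$ of the coordinates $\xi_0,\dots,\xi_k$, the fibers have dimension $(r-1)n$ and the outer integration runs over $(\R^n)^{k-r+1}$; by homogeneity, any H\"older splitting compatible with this slicing must assign each $F_j$ the exponent $s=(k+1)/(k-r+1)$, so the bound you produce is $\prod_j\|F_j\|_s$ with that value of $s$. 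In the regime $r\leq\lfloor k/2\rfloor$ in which you (correctly) place yourself, $s\leq 2(k+1)/(k+2)<2$. That is fatal: with $F_j=|\wh{\mu}|^{1-\eps}$ you would need $\|\wh{\mu}\|_{s(1-\eps)}$ with $s(1-\eps)<2$, whereas the restriction theory of Appendix~\ref{sec:restr} only controls moments of order $>2$, and indeed $\|\wh{\mu}\|_p=\infty$ for every $p\leq 2$ since $\mu$ is singular. Mixing in the $\tau$-interpolation of Proposition~\ref{thm:intg:IntermIntgBound} does not help: solving $\tfrac{k+1}{s}=\tfrac{k}{p}+\tfrac{k-r+1}{p'}$ at $s=2$ forces $\tfrac{r-1}{p'}=\tfrac{k-1}{2}$, which has no solution with $p'\geq 1$ when $r<\tfrac{k+1}{2}$. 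A secondary inaccuracy: Definition~\ref{thm:intro:NonDegen} is equivalent to injectivity of $\bfA^\transp$ on the $m$-dimensional fibers that fix $r$ coordinates, not on the $(r-1)n$-dimensional ones you describe (injectivity there is only a strictly weaker consequence).

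The correct dual choice, which is what the paper does in Section~\ref{sec:lin}, is the opposite of yours: slice along the fibers $\{\bfxi:(\xi_j)_{j\in S}=\bfeta\}$ with $|S|=r$, which have dimension $m=(k-r)n$ and on which $\bfA^\transp$ is injective by non-degeneracy, so that the rapid decay of $\wh{\psi}$ integrates out all $m$ fiber dimensions. A generalized H\"older inequality over the subsets $S\in\binom{\{0,\dots,k\}}{r}$ (with the kernel distributed among the factors) then gives $|\Lambda^*(F_0,\dots,F_k;J)|\lesssim\prod_j\|F_j\|_{(k+1)/r}$, and the exponent $(k+1)/r$ exceeds $2$ precisely when $m>\tfrac{k-1}{2}n$, which is how the hypothesis of the theorem enters. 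With this replacement your outline of the remaining steps (configuration measure, Bohr-set lower bounds, transference) is sound and agrees with the paper.
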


Note that the condition on $m$ is equivalent
to that of Theorem~\ref{thm:intro:CLP}.
We only sketch the proof of Theorem~\ref{thm:lin:CLPagain},
since it follows by a straightforward adaption
of the methods of this paper, with the only difference
lying in the treatment of the singular integral.

We start by stating a slight generalization of
Hölder's inequality that was already used (for $\ell = k+1$, $r = k$) in the
proof of Proposition~\ref{thm:intg:IntermIntgBound}.
We write $\binom{[\ell]}{r}$ for the set of subsets
of $[\ell]$ of size $r$.

\begin{proposition}
\label{thm:lin:GenHolder}
Let $(X,\mathfrak{M},\lambda)$ be a measured space 
and let $1 \leq r \leq \ell$.
For measurable functions $F_1,\dots,F_\ell : X \rightarrow \C$,
we have
\begin{align*}
	\int_X \prod_{j \in [\ell]} |F_j| \,\dlambda
	\leq \prod_{ S \in \binom{[\ell]}{r} }
	\Bigg[ \int_X \prod_{j \in S} |F_j|^{\ell/r} \dlambda \Bigg]^{1/\binom{\ell}{r}}.
\end{align*}
\end{proposition}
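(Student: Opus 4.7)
The plan is to reduce this inequality to the classical Hölder inequality by a symmetry argument exploiting the fact that each index $j\in[\ell]$ belongs to exactly $\binom{\ell-1}{r-1}$ subsets $S\in\binom{[\ell]}{r}$. First I would rewrite the integrand on the left as
\begin{align*}
    \prod_{j\in[\ell]}|F_j|
    = \prod_{S\in\binom{[\ell]}{r}}\prod_{j\in S}|F_j|^{1/\binom{\ell-1}{r-1}},
\end{align*}
which is valid because each factor $|F_j|$ appears on the right with total exponent $\binom{\ell-1}{r-1}\cdot \tfrac{1}{\binom{\ell-1}{r-1}}=1$.

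Next I would apply the classical Hölder inequality to the $\binom{\ell}{r}$ measurable functions
\begin{align*}
    g_S = \prod_{j\in S}|F_j|^{1/\binom{\ell-1}{r-1}},\qquad S\in\binom{[\ell]}{r},
\end{align*}
with equal conjugate exponents $p_S=\binom{\ell}{r}$, which satisfy $\sum_S 1/p_S = 1$. This yields
\begin{align*}
    \int_X \prod_{j\in[\ell]}|F_j|\,\dlambda
    \leq \prod_{S\in\binom{[\ell]}{r}}
    \Bigg[\int_X |g_S|^{\binom{\ell}{r}}\dlambda\Bigg]^{1/\binom{\ell}{r}}.
\end{align*}

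To finish, I would use the elementary combinatorial identity
\begin{align*}
    \frac{\binom{\ell}{r}}{\binom{\ell-1}{r-1}} = \frac{\ell}{r},
\end{align*}
so that $|g_S|^{\binom{\ell}{r}} = \prod_{j\in S}|F_j|^{\ell/r}$, matching the right-hand side of the proposition exactly. There is no serious obstacle here: the only thing one needs to check carefully is the exponent arithmetic, and picking the symmetric choice $p_S=\binom{\ell}{r}$ makes it come out correctly on the first try. The special case $\ell=k+1$, $r=k$ recovers the generalized Hölder step used in the proof of Proposition~\ref{thm:intg:IntermIntgBound}.
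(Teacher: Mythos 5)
Your proof is correct and follows exactly the same route as the paper's: the rewriting of $\prod_j |F_j|$ as a product over $S \in \binom{[\ell]}{r}$ with exponents $1/\binom{\ell-1}{r-1}$, the application of Hölder with equal conjugate exponents $\binom{\ell}{r}$, and the identity $\binom{\ell}{r}/\binom{\ell-1}{r-1} = \ell/r$ are all precisely the steps used there. Nothing to add.
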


\begin{proof}
First observe that, for arbitrary real numbers $a_1,\dots a_\ell \geq 0$, we have
\begin{align*}
	\prod_{j \in [\ell]} a_j = 
	\prod_{ S \in \binom{[\ell]}{r} }
	\bigg( \prod_{j \in S} a_j \bigg)^{1/\binom{\ell-1}{r-1}}.
\end{align*}
Next, let $I = \int_X \prod_{j \in [\ell]} |F_j| \,\dlambda$ and
apply Hölder's inequality in
\begin{align*}
	I &= \int_X 
	\prod_{ S \in \binom{[\ell]}{r} }
	\bigg( \prod_{j \in S} |F_j| \bigg)^{1/\binom{\ell-1}{r-1}} \dlambda \\
	&\leq 
	\prod_{ S \in \binom{[\ell]}{r} }	
	\Bigg[ \int_X \bigg( \prod_{j \in S} |F_j| \bigg)^{\binom{\ell}{r} / \binom{\ell-1}{r-1}} \dlambda \Bigg]^{1/\binom{\ell}{r}}.
\end{align*}
A quick computation shows that $\binom{\ell}{r} / \binom{\ell-1}{r-1} = \ell/r$, which concludes the proof.
\end{proof}

We now place ourselves under the assumptions of Theorem~\ref{thm:lin:CLPagain},
and in particular we assume that the matrices $A_1,\dots,A_k$ 
are non-degenerate in the sense of Definition~\ref{thm:intro:NonDegen}.
We also write $A_0 = 0_{n \times n}$ throughout.
This matches the framework of this paper except
that now $Q = 0$.

We fix a smooth cutoff $\psi \in \cutoff{n}$ which is at least $1$ on a box $[-c,c]^n$.
We define the oscillatory integral
\begin{align}
\label{eq:lin:OscIntgDef}
	J(\bfxi) = \int_{\R^n} e(\bfA^\transp \bfxi \cdot y) \psi(y) \dy = \wh{\psi}( -\bfA^\transp \bfxi ).
\end{align}
The counting operators are now defined by\footnote{
In fact, one could work without cutoff functions in the $y$-variable,
as was done in~\cite{CLP:Configs}, which simplifies the
estimates somewhat. Here we keep smooth cutoffs to
stay closer to the framework of the article.}
\begin{align*}
	\Lambda(f_0,\dots,f_k)
	&= \int_{\R^n} \int_{\R^m} f_0(x) f_1(x + A_1 y) \cdots f_k(x + A_k y) \dx \ \psi(y) \dy, \\
	\Lambda^*(F_0, \dots, F_k ; J)
	&= \int_{(\R^n)^k} F_0(- \xi_1 - \dotsb - \xi_k) F_1( \xi_1 ) \cdots F_k( \xi_k ) J(\bfxi) \dbfxi,
\end{align*}
for functions $f_i,F_i \in \schw{n}$,
and we have $\Lambda(f_0,\dots,f_k) = \Lambda^*(\wh{f}_0,\dots,\wh{f}_k ;J)$ as before.

Since we assumed that $\psi\in \bump{m}$, it follows from~\eqref{eq:lin:OscIntgDef} that
\begin{align}
\label{eq:lin:OscIntgDecay}
	|J(\bfxi)| \lesssim_N (1 + |\bfA^\transp \bfxi|)^{-N}
\end{align}
for every $N > 0$.
Via some matricial considerations (as in~\cite[Lemma~3.2]{CLP:Configs}),
it can be checked that Definition~\ref{thm:intro:NonDegen}
is equivalent to the requirement that
$\bfA^\transp : \R^{kn} \rightarrow \R^{kn - rn}$
is injective on each subspace of the form
\begin{align*}
	\{ \bfxi \in (\R^n)^k \,:\, (\xi_j)_{j \in S} = \bfeta \},
\end{align*}
where $S$ is a subset of $\{0,\dots,k\}$ of size $r$
and $\bfeta \in \R^{rn}$,
and we wrote $\xi_0 = -( \xi_1 + \dotsb + \xi_k )$ as before.
Now consider an arbitrary subset $S$ of $\{0,\dots,k\}$ of size $r$.
By~\eqref{eq:lin:OscIntgDecay} one quickly deduces that
\begin{align}
\label{eq:lin:OscIntgFiberMoments}
	&\phantom{(q > 0\,\bfeta \in (\R^n)^S)} &
	\int_{(\xi_j)_{j \in S} = \bfeta} |J(\bfxi)|^q \dsigma(\bfxi) &\lesssim_q 1
	&&(q > 0,\,\bfeta \in (\R^n)^r),
\end{align}
in the same manner as in the proof of
Proposition~\ref{thm:intg:FiberCdt}.

In our linear setting one
may naturally obtain a better range of $m$ 
for which the multilinear form $\Lambda^*$ is controlled by $L^s$ norms.
The next proposition demonstrates this,
and it is applicable to our problem only when
when $\tfrac{k+1}{r} > 2 $, 
or equivalently $m = (k-r) n > \tfrac{k-1}{2} n$.

\begin{proposition}
\label{thm:lin:SingIntgBound}
We have
\begin{align*}
	|\Lambda^*(F_0,\dots,F_k;J)| \lesssim
	\| F_0 \|_{(k+1)/r} \cdots \| F_k \|_{(k+1)/r}.
\end{align*}
\end{proposition}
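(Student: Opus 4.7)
\smallskip
\textit{Proof plan for Proposition~\ref{thm:lin:SingIntgBound}.}
The plan is to combine the generalized H\"older inequality of Proposition~\ref{thm:lin:GenHolder} with the fiber moment bound~\eqref{eq:lin:OscIntgFiberMoments}. The key observation is that, in the linear case, the oscillatory kernel $J = \wh{\psi} \circ (-\bfA^\transp \cdot)$ has Schwartz-type decay in $\bfA^\transp \bfxi$ and hence bounded $L^q$ norm on any slice $\{(\xi_j)_{j \in S} = \bfeta\}$ for any subset $S \subset \{0,\dots,k\}$ of size $r$ and any $q > 0$. This is a much stronger input than in the polynomial case, and it should let us reach the sharp exponent $s = (k+1)/r$.

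First, I would introduce the auxiliary non-negative functions $G_j(\bfxi) = |F_j(\xi_j)| \cdot |J(\bfxi)|^{r/(k+1)}$ on $(\R^n)^k$, so that $\prod_{j=0}^k G_j(\bfxi) = \prod_{j=0}^k |F_j(\xi_j)| \cdot |J(\bfxi)|$, and apply Proposition~\ref{thm:lin:GenHolder} with $\ell = k+1$ to obtain
\begin{align*}
    |\Lambda^*(F_0,\dots,F_k;J)|
    \leq \prod_{ S \in \binom{\{0,\dots,k\}}{r} }
    \Bigg[ \int_{(\R^n)^k} \prod_{j \in S} |F_j(\xi_j)|^{(k+1)/r} \cdot |J(\bfxi)| \, \dbfxi \Bigg]^{1/\binom{k+1}{r}}.
\end{align*}

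The second step is to bound each inner integral by Fubini along the slices $\{(\xi_j)_{j\in S} = \bfeta\}$. With the usual convention $\xi_0 = -(\xi_1 + \dotsb + \xi_k)$, each such slice is an affine subspace of $(\R^n)^k$ of dimension $(k-r)n$, and integration decomposes into an outer integral over $\bfeta \in (\R^n)^r$ and an inner surface integral of $|J(\bfxi)|$ over the slice. By~\eqref{eq:lin:OscIntgFiberMoments} with $q = 1$, the inner integral is $O(1)$ uniformly in $\bfeta$, leaving
\begin{align*}
    \int_{(\R^n)^k} \prod_{j \in S} |F_j(\xi_j)|^{(k+1)/r} \cdot |J(\bfxi)| \, \dbfxi
    \lesssim \prod_{j \in S} \|F_j\|_{(k+1)/r}^{(k+1)/r}.
\end{align*}

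Finally, I would combine everything and count multiplicities: each index $j \in \{0,\dots,k\}$ appears in exactly $\binom{k}{r-1}$ subsets of size $r$, so the exponent of $\|F_j\|_{(k+1)/r}$ in the final product is
\begin{align*}
    \binom{k}{r-1} \cdot \frac{k+1}{r \binom{k+1}{r}} = 1,
\end{align*}
using the identity $\binom{k+1}{r} = \tfrac{k+1}{r} \binom{k}{r-1}$, which recovers the desired bound $\|F_0\|_{(k+1)/r} \cdots \|F_k\|_{(k+1)/r}$. I anticipate that the only subtlety is the coarea/Fubini step for subsets $S$ containing the index $0$, since the slice then carries the linear constraint $\xi_1 + \dotsb + \xi_k = -\eta_0$; this is handled by the same rotation-and-injectivity trick already used in the proof of Proposition~\ref{thm:intg:FiberCdt}, via the equivalent form of non-degeneracy recorded just before~\eqref{eq:lin:OscIntgFiberMoments}.
\qed
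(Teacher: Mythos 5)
Your plan is correct and coincides with the paper's own proof: the same generalized H\"older inequality of Proposition~\ref{thm:lin:GenHolder} over the $\binom{k+1}{r}$ subsets $S$, the same slicing step using the fiber moment bound~\eqref{eq:lin:OscIntgFiberMoments} (the paper takes $q=1/r$ where you take $q=1$; both are fine since that bound holds for every $q>0$), and the same multiplicity count via $\binom{k+1}{r}=\tfrac{k+1}{r}\binom{k}{r-1}$. One slip to fix: the auxiliary functions should be $G_j(\bfxi)=|F_j(\xi_j)|\,|J(\bfxi)|^{1/(k+1)}$ rather than $|F_j(\xi_j)|\,|J(\bfxi)|^{r/(k+1)}$, since with your exponent $\prod_{j=0}^k G_j = \prod_j |F_j(\xi_j)|\cdot|J(\bfxi)|^{r}$ is not the integrand of $\Lambda^*$ when $r>1$; the displayed H\"older inequality you then write down is exactly the one produced by the corrected choice, so nothing downstream is affected.
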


\begin{proof}
Write $I = \Lambda^*(F_0,\dots,F_k;J)$ and $[0,k] = \{ 0, \dots, k \}$
for the purpose of this proof.
By Proposition~\ref{thm:lin:GenHolder}, we have
\begin{align*}
	I
	&\leq \int_{(\R^n)^k} \prod_{j=0}^k \big( F_j(\xi_j) \cdot |J(\bfxi)|^{\frac{1}{k+1}} \big) \dbfxi \\
	&\leq \prod_{ S \in \binom{[0,k]}{r} }
	\bigg[ \int_{(\R^n)^k} \prod_{j \in S} F_j(\xi_j)^{\frac{k+1}{r}} |J(\bfxi)|^\frac{1}{r} \dbfxi \bigg]^{1/\binom{k+1}{r}}.
\end{align*}
Integrating along slices, and invoking~\eqref{eq:lin:OscIntgFiberMoments}, we obtain
\begin{align*}
	I
	&\leq \prod_{ S \in \binom{[0,k]}{r} }
	\bigg[ \int_{ (\R^n)^r } \prod_{j \in S} F_j (\eta_j)^{\frac{k+1}{r}} 
	\bigg( \int_{(\xi_j)_{j \in S} = \bfeta} |J(\bfxi)|^\frac{1}{r} \dsigma(\bfxi) \bigg) \dbfeta \bigg]^{1/\binom{k+1}{r}} 
	\\
	&\lesssim \prod_{ S \in \binom{[0,k]}{r} }
	\bigg[ \int_{ (\R^n)^r } \prod_{j \in S} F_j (\eta_j)^{\frac{k+1}{r}} \dbfeta \bigg]^{1/\binom{k+1}{r}} .
\end{align*}
Therefore each inner integral splits and we have
\begin{align*}
	I
	&\lesssim \prod_{ S \in \binom{[0,k]}{r} }
	\Bigg[ \prod_{j \in S} \int_{\R^n} F_j(\eta)^{\frac{k+1}{r}} \dbfeta \Bigg]^{1/\binom{k+1}{r}} \\
	&= \prod_{j \in [0,k]} 
	\bigg[ \int_{\R^n} F_j(\eta)^{\frac{k+1}{r}} \dbfeta \bigg]^{\binom{k}{r-1}/\binom{k+1}{r}}.
\end{align*}
Since $\binom{k+1}{r} / \binom{k}{r-1} = (k+1)/r$,
it follows that $I \leq \prod_{j \in [0,k]} \| F_j \|_{\frac{k+1}{r}}$, as was to be shown.
\end{proof}

With Proposition~\ref{thm:lin:SingIntgBound} in hand,
it is a simple matter to adapt the rest of the argument in this paper.
In fact, one would need a slight variant 
of that proposition involving a shift $\theta$,
as in the case of Proposition~\ref{thm:intg:MainIntgBound}.
From such a proposition one may deduce the natural analogues of
Propositions~\ref{thm:intg:SingIntgBounds}
and \ref{thm:scheme:OpFourierControl}, which will impose
the same conditions on $\alpha$ and $\beta$,
and a distinct condition $m > \frac{k-1}{2}n$ on $m$.
With these singular integral bounds in hand, 
the arguments of Sections~\ref{sec:config}--\ref{sec:transf} 
go through essentially unchanged,
and one obtains Theorem~\ref{thm:lin:CLPagain} 
by the process described at the end of Section~\ref{sec:scheme}.

\appendix

\section{The arithmetic regularity lemma}
\label{sec:reg}

In this section, we derive a version of
the $U^2$ arithmetic regularity lemma
following Tao's argument~\cite{Tao:U2RegLemma},
with minor twists to accomodate functions
defined over $\R^n$ instead of $\T^n$.
This set of ideas itself originates in a paper 
of Bourgain~\cite{Bourgain:DilatesConfig}, albeit in
a rather different language.
We include the complete proof since the exact result we need
is not stated in a convenient form in the literature.

We defined a Bohr set of $\T^n$
of frequency set $\Gamma \subset \Z^n$,
radius $\delta \in \big( 0,\frac{1}{2} \big]$ and dimension $d = |\Gamma| < \infty$ 
by~\eqref{eq:abs:BohrSetDef}.
We define the dilate of a Bohr set $B$ of frequency set $\Gamma$ and radius $\delta$
by a factor $\rho \in (0,1]$ as $B(\Gamma,\delta)_\rho = B(\Gamma,\rho\delta)$.
Note that $B(\Gamma,\delta) = \phi^{-1}( 2\delta \cdot Q )$
for the cube $Q = \cube{2}$
and the morphism $\phi : \R^n \rightarrow \T^d$, $x \mapsto (\xi \cdot x)_{\xi \in \Gamma}$.
We can find a cube covering of the form $Q \subset \bigcup_{t \in T} ( t + \delta \cdot Q )$
with $|T| = \lceil 1/\delta \rceil^d \leq (2/\delta)^d$, and therefore
\begin{align*}
	1 = |\phi^{-1}(Q)|
	\leq \sum_{t \in T} |\phi^{-1}( t + \delta \cdot Q)|.
\end{align*}
By the pigeonhole principle,
there exists $t \in T$ such that
$|\phi^{-1}( t + \delta \cdot Q )| \geq (\delta/2)^d$, 
and since
$\phi^{-1}( t + \delta \cdot Q ) - \phi^{-1}( t +  \delta \cdot Q ) \subset B$,
we deduce that
\begin{align}
\label{eq:reg:BohrSetLowerBound}
	|B| = |B(\Gamma,\delta)| \geq (\delta/2)^d
	\quad\text{for all $\delta \in (0,\tfrac{1}{2}]$}.
\end{align}

Now consider the tent function $\Delta(x) = (1 - |x|)^+$ on $\R$,
which is $1$-Lipschitz, bounded by $1$ everywhere,
and bounded from below by $1/2$ on $\big[\! -\tfrac{1}{2},\tfrac{1}{2} \big]$.
For any Bohr set $B$, 
we define functions $\phi_B, \nu_B : \T^n \rightarrow \C$ by
\begin{align*}
	\phi_B(x) =
	\Delta\Big( \frac{1}{\delta} \sup\limits_{\xi \in \Gamma} \| \xi \cdot x \| \Big),
	\quad
	\nu_B = \frac{\phi_B}{\int \phi_B},
\end{align*}
so that $\int \nu_B = 1$ and
$\tfrac{1}{2} 1_{B_{1/2}} \leq \nu_B \leq 1_B$.
The function $\nu_B$ is essentially
a smoothed normalized indicator function of the Bohr set $B$,
and its most important properties are summarized in the following proposition.

\begin{proposition}
For any Bohr set $B$ of frequency set $\Gamma \subset \Z^n$ 
and radius $\delta \in (0,\tfrac{1}{2}]$, 
we have
\begin{align}
	\label{eq:reg:BohrLinftyBound}
	&&
	\| \nu_B \|_\infty 
	&\lesssim (\delta/4)^{-d},
	&&
	\\
	\label{eq:reg:BohrAlmostPeriodicity}
	&&	
	\| T^t \nu_{B} - \nu_B \|_{\infty}
	&\lesssim (\delta/4)^{-d} \rho
	&&\text{for $t \in B_\rho$, $\rho \in (0,1]$,}
	\\
	\label{eq:reg:BohrAnnihilation}
	&&
	\wh{\nu}_B(\xi) &= 1 + O(\delta)
	&&\text{for $\xi \in \Gamma$}.
\end{align}
\end{proposition}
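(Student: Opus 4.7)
The plan is to derive all three bounds from two elementary inputs: a lower bound on the normalization $\int \phi_B$, and the $1$-Lipschitz property of the tent function $\Delta$. The only nontrivial ingredient has already been established in~\eqref{eq:reg:BohrSetLowerBound}.

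First I would produce the estimate $\int \phi_B \gtrsim (\delta/4)^d$. Since $\phi_B \geq \tfrac{1}{2}$ on $B_{1/2}$ (where the argument of $\Delta$ is at most $\tfrac{1}{2}$) and $B_{1/2} = B(\Gamma,\delta/2)$ has measure at least $(\delta/4)^d$ by~\eqref{eq:reg:BohrSetLowerBound} applied with radius $\delta/2$, we get $\int \phi_B \geq \tfrac{1}{2}(\delta/4)^d$. Combined with the pointwise bound $\phi_B \leq 1$, this immediately yields~\eqref{eq:reg:BohrLinftyBound}: $\|\nu_B\|_\infty \leq (\int \phi_B)^{-1} \lesssim (\delta/4)^{-d}$.

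Next, for~\eqref{eq:reg:BohrAlmostPeriodicity}, I would exploit the Lipschitz properties of the building blocks of $\phi_B$. Since $\Delta$ is $1$-Lipschitz and $x \mapsto \sup_{\xi \in \Gamma} \|\xi \cdot x\|$ is itself $1$-Lipschitz (as a supremum of $1$-Lipschitz functions, using the triangle inequality on $\T$), for any $t \in B_\rho$ one has
\begin{align*}
	|\phi_B(x+t) - \phi_B(x)| \leq \tfrac{1}{\delta} \sup_{\xi \in \Gamma} \|\xi \cdot t\| \leq \rho,
\end{align*}
and dividing by $\int \phi_B \gtrsim (\delta/4)^d$ gives the claimed bound $\|T^t \nu_B - \nu_B\|_\infty \lesssim (\delta/4)^{-d}\rho$.

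Finally, for~\eqref{eq:reg:BohrAnnihilation}, I would use that $\int \nu_B = 1$ to write $\wh{\nu}_B(\xi) - 1 = \int \nu_B(x) ( e(-\xi\cdot x) - 1 ) \, \mathrm{d}x$. On the support of $\nu_B$ (which is contained in $B$), we have $\|\xi \cdot x\| \leq \delta$ for every $\xi \in \Gamma$, hence $|e(-\xi \cdot x) - 1| \lesssim \|\xi \cdot x\| \leq \delta$. Integrating against $\nu_B$ (which has total mass one) yields $|\wh{\nu}_B(\xi) - 1| \lesssim \delta$. There is no real obstacle to this proof: each estimate follows by direct computation once the lower bound on $\int \phi_B$ is in place.
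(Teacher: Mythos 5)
Your proposal is correct and follows essentially the same route as the paper: the lower bound $\int \phi_B \geq \tfrac{1}{2}|B_{1/2}| \geq \tfrac{1}{2}(\delta/4)^d$ from~\eqref{eq:reg:BohrSetLowerBound}, the Lipschitz property of $\Delta$ combined with $\|\xi\cdot t\|\leq\rho\delta$ for $t\in B_\rho$, and the bound $|e(-\xi\cdot x)-1|\lesssim\|\xi\cdot x\|\leq\delta$ on the support of $\nu_B$. No gaps.
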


\begin{proof}
Note that $\int \phi_B \geq \tfrac{1}{2} |B_{1/2}| \geq \tfrac{1}{2}(\delta/4)^d$
by~\eqref{eq:reg:BohrSetLowerBound},
which implies the first estimate.
For every $x,t \in \T^n$, we also have
\begin{align*}
	| \nu_{B}(x + t) - \nu_B(x) |
	\leq 2(\delta/4)^{-d} \Big| \Delta\Big( \frac{1}{\delta}  \sup\limits_{\xi \in \Gamma} \| \xi \cdot (x+t) \| \Big) 
	- \Delta\Big( \frac{1}{\delta}  \sup\limits_{\xi \in \Gamma} \| \xi \cdot x \| \Big)  \Big|.
\end{align*}
When $t \in B_\rho$,  we have 
$\| \xi \cdot t \| \leq \rho\delta$ for every $\xi \in \Gamma$,
and therefore $| \nu_{B}(x+t) - \nu_B(x) | \lesssim (\delta/4)^{-d} \rho$
since $\Delta$ is $1$-Lipschitz,
and we have established the second estimate.
To obtain the third,
consider $\xi \in \Gamma$,
and observe that since $\nu_B$ is supported on $B$
and $\| \xi \cdot x \| \leq \delta$ for $x \in B$, we have
\begin{align*}
	\wh{\nu}_B(\xi) 
	= \int_B \nu_B(x) e( -\xi \cdot x ) \dx
	= (1 + O(\delta)) \cdot \int \nu_B
	= 1 + O(\delta).
\end{align*}
\end{proof}

\begin{proposition}
\label{thm:reg:RegLemma}
Let $\eps \in (0,1]$ be a parameter
and let $\kappa : (0,1]^3 \rightarrow (0,1]$ be a decay function.
Suppose that $f \in \bump{n}$ is
such that $0 \leq f \leq 1$ and $\Supp f \subset \cube{8}$.
Then there exists a decomposition $f = f_1 + f_2 + f_3$ with
$f_i \in \bump{n}$, $\Supp f_i \subset \cube{4}$, $\|f_i\|_\infty \leq 1$,
$f_1 \geq 0$, $f_1 + f_2 \geq 0$, $\int f_1 = \int f$
as well as a Bohr set $B$ of 
dimension $d \lesssim_{\eps,\kappa} 1$
and radius $\delta \gtrsim_{\eps,\kappa} 1$
such that
\begin{align}
\label{eq:reg:RegLemmaEqs}
	\| T^t f_1 - f_1 \|_\infty 
	\leq \eps \quad\ \forall t \in B,
	\qquad
	\| f_2 \|_2 \leq \eps,
	\qquad
	\| \wh{f}_3 \|_{L^\infty(\R^n)} 
	\leq \kappa(\eps,d^{-1},\delta).
\end{align}
\end{proposition}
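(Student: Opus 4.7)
\smallskip
\textit{Proof plan.} The argument follows the Bourgain--Tao energy-increment proof of the $U^2$ arithmetic regularity lemma, adapted to $\R^n$ via the support assumption on $f$. Using $\Supp f \subset \cube{8} \subset (-\tfrac{1}{2},\tfrac{1}{2})^n$, identify $f$ with a function $\tilde f$ on $\T^n$. Starting from $\Gamma_0 = \emptyset$, build inductively increasing frequency sets $\Gamma_s \subset \Z^n$ and shrinking radii $\delta_s \in (0,\tfrac{1}{2}]$, yielding Bohr sets $B_s = B(\Gamma_s, \delta_s)$ of dimension $d_s$. At stage $s$, take $g^{(s)} = \tilde f \ast \nu_{\widetilde B_s}$ on $\T^n$, where $\widetilde B_s = B(\Gamma_s, \delta_s/\rho) \supset B_s$ is an enlargement with dilation factor $\rho = \rho(\eps, d_s, \delta_s)$ chosen small enough that~\eqref{eq:reg:BohrAlmostPeriodicity} combined with Young's inequality yields $\| T^t g^{(s)} - g^{(s)} \|_\infty \leq \eps$ for every $t \in B_s = (\widetilde B_s)_\rho$.

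If $\| \widehat{\tilde f - g^{(s)}} \|_{\ell^\infty(\Z^n)} \leq \kappa(\eps, d_s^{-1}, \delta_s)/C_\eta$ for a cutoff-dependent constant $C_\eta$ fixed below, halt. Otherwise pick $\xi_s \in \Z^n$ nearly achieving the supremum, append it to $\Gamma_{s+1}$, and decrease $\delta_{s+1}$ as a function of $\eps$, $\kappa$, and the new dimension. By~\eqref{eq:reg:BohrAnnihilation}, $\widehat{g^{(s+1)}}(\xi_s)$ captures most of $\widehat{\tilde f}(\xi_s)$, and Plancherel gives the energy increment $\| g^{(s+1)} \|_2^2 - \| g^{(s)} \|_2^2 \gtrsim \kappa(\eps, d_s^{-1}, \delta_s)^2$. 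Since $\| g^{(s)} \|_2 \leq \| f \|_2 \leq 1$, the iteration terminates after a number of steps depending only on $\eps$ and $\kappa$, and unwinding the recursive choices of $\delta_{s+1}$ gives the uniform bounds $d \lesssim_{\eps,\kappa} 1$ and $\delta \gtrsim_{\eps,\kappa} 1$ on the final Bohr set.

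To pass back to $\R^n$, fix a cutoff $\eta \in \bump{n}$ with $\eta \equiv 1$ on $\cube{8}$, $\Supp \eta \subset \cube{4}$, and $\hat\eta$ Schwartz enough that $C_\eta = \sup_{\xi \in \R^n} \sum_{k \in \Z^n} |\hat \eta(\xi - k)| < \infty$. Set $f_1 = \eta \cdot g$ with $g$ identified to its $\Z^n$-periodic extension on $\R^n$, and absorb small truncation errors into $f_2 \in \bump{n}$ to secure $0 \leq f_1 \leq 1$, $\int f_1 = \int f$, $f_1 + f_2 \geq 0$, and $\| f_2 \|_2 \leq \eps$ (the $L^2$-smallness comes from one additional iteration round keyed to the $L^2$ residual, using that $\|\tilde f - g\|_{L^2(\T^n)}$ can similarly be made at most $\eps$, or else by standard truncation bookkeeping). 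Define $f_3 = f - f_1 - f_2$, supported in $\cube{4}$ by construction, with the almost-periodicity of $f_1$ inherited from that of $g$ together with the smoothness of $\eta$ near $\Supp g$.

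The central technical obstacle is the passage from the $\ell^\infty(\Z^n)$ control on $\widehat{\tilde f - g}$ produced by the iteration to the required $L^\infty(\R^n)$ control on $\widehat{f_3}$. Since $f_3 = \eta \cdot h$ for a $\Z^n$-periodic function $h$ on $\R^n$ (the periodic extension of $\tilde f - g$), expanding $h$ as a Fourier series and integrating termwise gives the Poisson-summation identity $\widehat{f_3}(\xi) = \sum_{k \in \Z^n} \hat h(k) \, \hat \eta(\xi - k)$, whence $\| \widehat{f_3} \|_{L^\infty(\R^n)} \leq C_\eta \| \hat h \|_{\ell^\infty(\Z^n)} \leq \kappa(\eps, d^{-1}, \delta)$ by the stopping criterion. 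This identity pins down the choice of $C_\eta$ in the stopping rule and is the essential bridge from the $\T^n$-level iteration to the $\R^n$-level conclusion; a secondary nuisance is that $\nu_B$ on $\T^n$ may fail to have small $\T^n$-diameter, but this is handled by a spatial localization of $\nu_B$ (multiplying by a bump supported near $0$ and re-verifying~\eqref{eq:reg:BohrLinftyBound}--\eqref{eq:reg:BohrAnnihilation} up to harmless constants).
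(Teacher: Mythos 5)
Your overall architecture (work on $\T^n$ via the support assumption, run a Bohr-set iteration, then transfer the $\ell^\infty(\Z^n)$ Fourier bound to $L^\infty(\R^n)$ through a cutoff and the identity $\wh{f}_3(\xi)=\sum_k\wh{f}_3(k)\wh{\chi}(\xi-k)$) matches the paper, and your final transference step is essentially identical to the one used there. But the core of your argument — a frequency-by-frequency energy increment — has a genuine gap. Your ``projections'' are convolutions $g^{(s)}=\tilde f\ast\nu_{\widetilde B_s}$, not conditional expectations, so there is no Pythagoras: $\|g^{(s)}\|_2^2=\sum_m|\wh{f}(m)|^2|\wh{\nu}_{\widetilde B_s}(m)|^2$, and enlarging $\Gamma_s$ changes $\wh{\nu}$ at \emph{every} frequency, possibly decreasing the contribution of frequencies already captured. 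Even at the new frequency $\xi_s$, the stopping criterion only gives $|\wh{f}(\xi_s)|\,|1-\wh{\nu}_{\widetilde B_s}(\xi_s)|$ large, which does not force $|\wh{\nu}_{\widetilde B_s}(\xi_s)|^2$ to be much smaller than $|\wh{\nu}_{\widetilde B_{s+1}}(\xi_s)|^2\approx 1$ (the tent-based $\wh{\nu}_B$ is not non-negative in general), so the claimed increment $\gtrsim\kappa^2$ does not follow. The paper sidesteps this entirely: it builds the whole tower of Bohr sets up front by adding the \emph{entire} $\eta_{i+1}$-large spectrum of $f$ and of the previous $\nu_j$'s at each stage, and then pigeonholes on $\sum_i\sum_{\Gamma_{i+2}\smallsetminus\Gamma_i}|\wh f|^2\leq 2$ to find an index $i$ where the intermediate spectral mass is at most $\eps^2/2$.

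The second problem is your treatment of $f_2$. The $L^2$-small piece is not a ``truncation error'': generically $\tilde f-g$ is \emph{not} small in $L^2$ (a Fourier-uniform function can have $L^2$ norm comparable to $1$), so the suggestion that $\|\tilde f-g\|_{L^2(\T^n)}$ ``can similarly be made at most $\eps$'' is false, and without a concrete source for $f_2$ you also lose the structural facts $f_1\geq0$, $f_1+f_2\geq0$, $\int f_1=\int f$. In the paper these all come for free from taking $f_1=f\ast\nu_i$ and $f_1+f_2=f\ast\nu_{i+1}$ (convolutions of $f\geq0$ with probability densities), with $\|f_2\|_2\leq\eps$ exactly the output of the pigeonhole plus the bounds $|\wh{\nu}_{i+1}(m)-\wh{\nu}_i(m)|\lesssim\delta_i+\delta_{i+1}$ on $\Gamma_i$ and $\leq 2\eta_{i+2}$ off $\Gamma_{i+2}$. (A minor point: your proposed spatial localization of $\nu_B$ is unnecessary — the paper simply seeds $\Gamma_0=\{e_1,\dots,e_n\}$ with $\delta_0\leq 1/8$ so that every Bohr set lies in $\cube{8}$.)
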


\begin{proof}
We initially consider $f$ as defined on the torus $\T^n$, 
by identification with its $1$-periodization from the cube $\cube{2}$.
Consider sequences of positive real numbers
\begin{align*}
	\tfrac{1}{2} \geq \delta_0 \geq \delta_1 \geq \dots \geq \delta_i \geq \dots
	\quad\text{and}\quad
	1 \geq \eta_1 \geq \dots \geq \eta_i \geq \dots
\end{align*}
to be determined later.
We define sequences of frequency sets $\Gamma_i$
and Bohr sets $B_i$ of dimension $d_i$,
and measures $\nu_i$ inductively for $i \geq 0$ by
\begin{align}
	\label{eq:reg:SpectraDef}
	\Gamma_{i+1}
	&= \Gamma_i \bigcup \{\, |\wh{f}| \geq \eta_{i+1} \,\} \bigcup \bigcup_{j=0}^i \{\, |\wh{\nu}_j| \geq \eta_{i+1} \,\},
	\\
	\notag
	B_{i+1} &= B\big( \Gamma_{i+1},\delta_{i+1} \big),
	\qquad \nu_{i+1} = \nu_{B_{i+1}}.
\end{align}
We initialize with $\Gamma_0 = \{ e_1,\dots,e_n \}$, $\delta_0 \leq \tfrac{1}{8}$,
$B_0 = B(\Gamma_0,\delta_0)$, $\nu_0 = \nu_{B_0}$,
so that $d_0 = n$ and by the definition~\eqref{eq:abs:BohrSetDef} of Bohr sets,
we have $B_i \subset \cube{8}$ for all $i$.
Note that, by Tchebychev, we also have a dimension bound
\begin{align*}
	d_{i+1} \leq d_i + \frac{\|\wh{f} \|_2^2}{\eta_{i+1}^2} + \sum_{j = 0}^i \frac{\| \wh{\nu}_j \|_2^2}{\eta_{i+1}^2}.
\end{align*}
By Plancherel and the bound~\eqref{eq:reg:BohrLinftyBound}, 
it follows that
\begin{align}
\label{eq:reg:DimBound}
	&\phantom{(i \geq 1)} &
	d_i &\lesssim_{\delta_0,\dots,\delta_{i-1},d_{i-1},\eta_i} 1
	&&(i \geq 1).
\end{align}

We start by finding a piece of the Fourier expansion
of $f$ which is small in $L^2$.
To this end observe that
\begin{align*}
	\sum_{i=0}^k \sum_{\Gamma_{i+2} \smallsetminus \Gamma_i} |\wh{f}|^2 
	\leq 2\| \wh{f} \|_2^2 = 2\| f \|_2^2 \leq 2.
\end{align*}
By Tchebychev's bound, it follows that
\begin{align*}
	\#\Big\lbrace  0 \leq i \leq k \,:\, 
	\sum_{\Gamma_{i+2} \smallsetminus \Gamma_i} |\wh{f}|^2 \geq \frac{\eps^2}{2} \Big\rbrace  
	\leq \frac{4}{\eps^2}.
\end{align*}
Choosing $k = \lceil 4/\eps^2\rceil$, 
we obtain the existence of an index 
$0 \leq i \leq k$ such that
\begin{align}
\label{eq:reg:SmallL2Piece}
	 \sum_{\Gamma_{i+2} \smallsetminus \Gamma_i} |\wh{f}|^2 \leq \frac{\eps^2}{2}.
\end{align}

We now decompose $f$ into three pieces $f_1,f_2,f_3 : \T^n \rightarrow \C$ defined by
\begin{align*}
	f = f \ast \nu_i
	+ ( f \ast \nu_{i+1} - f \ast \nu_i )
	+ ( f - f \ast \nu_{i+1} )
	= f_1 + f_2 + f_3 .
\end{align*}
Since $f$ takes values in $[0,1]$
and $\int \nu_i = 1$,
the functions $f_1,f_2,f_3$ take values in $[-1,1]$
by simple convolution bounds.
It is also clear that $f_1$ and $f_1 + f_2$ are non-negative
and $\int f_1 = \int f$.

Let us first analyze the $L^2$-small piece.
By Plancherel and~\eqref{eq:reg:SmallL2Piece}, we have
\begin{align}
	\notag
	\| f \ast \nu_{i+1} - f \ast \nu_i \|_2^2
	&= \sum_{m \in \Z^n} |\wh{f}(m)|^2 |\wh{\nu_{i+1}}(m) - \wh{\nu_i}(m)|^2
	\\
	\label{eq:reg:L2Piece}
	&\leq\ \frac{\eps^2}{2}\
	+ \sum_{ m \in \Gamma_i \cup (\Z^n \smallsetminus \Gamma_{i+2}) }
		|\wh{f}(m)|^2 |\wh{\nu_{i+1}}(m) - \wh{\nu_i}(m)|^2	
\end{align}
For $m \in \Gamma_i \subset \Gamma_{i+1}$,
by~\eqref{eq:reg:BohrAnnihilation} we have
\begin{align*}
	|\wh{\nu_{i+1}}(m) - \wh{\nu_i}(m)| \lesssim \delta_{i+1} + \delta_i.
\end{align*}
For $m \not\in \Gamma_{i+2}$
the definition~\eqref{eq:reg:SpectraDef} of $\Gamma_{i+2}$
implies that $|\wh{\nu_i}(m)| \leq \eta_{i+2}$ and $|\wh{\nu_{i+1}}(m)| \leq \eta_{i+2}$.
Inserting these bounds into~\eqref{eq:reg:L2Piece}, we obtain
\begin{align}
\label{eq:reg:SmallL2}
	\| f \ast \nu_{i+1} - f \ast \nu_i \|_2^2
	\leq \frac{\eps^2}{2} + O( \delta_i + \delta_{i+1} + \eta_{i+2} )
	\leq \eps^2,
\end{align}
provided that $\delta_j,\eta_j \leq c\eps^2$ for all $j$.

Next, let us focus on the almost-periodic piece.
Introducing a parameter $\rho_i \in (0,1]$,
we deduce from~\eqref{eq:reg:BohrAlmostPeriodicity} 
that for $t \in B_{\rho_i}$, we have
\begin{align}
	\notag
	\| T^t f \ast \nu_i - f \ast \nu_i \|_\infty
	&\leq \| f \|_1 \| T^t \nu_i - \nu_i \|_\infty 
	\\
	\notag
	&\lesssim_n \delta_i^{-d_i} \rho_i 
	\\
	\label{eq:reg:SmallAlmostPeriodicity}
	&\leq \eps,
\end{align} 
choosing $\rho_i = c_n \eps \delta_i^{d_i}$.
We write $\wt{\delta}_i = \rho_i \delta_i$,
and from~\eqref{eq:reg:DimBound} we see that 
$\wt{\delta_i}$ depends at most on 
$n,\eps,\delta_0,\dots,\delta_i,\eta_1,\dots,\eta_i$.

Finally, we consider the Fourier-small piece.
By Fourier inversion,
\begin{align*}
	\| ( f - f \ast \nu_{i+1} )^\wedge \|_{\ell^\infty(\Z^n)}
	= \sup\limits_{m \in \Z^n} |\wh{f}(m)| | 1 - \wh{\nu}_{i+1}(m) |.
\end{align*}
For $m \in \Gamma_{i+1}$,
we have $|1 - \wh{\nu}_{i+1} (m)| \lesssim \delta_{i+1}$ by~\eqref{eq:reg:BohrAnnihilation},
while for $m \not\in \Gamma_{i+1}$,
the definition~\eqref{eq:reg:SpectraDef} of $\Gamma_{i+1}$ shows that
$|\wh{f}(m)| \leq \eta_{i+1}$.
Therefore
\begin{align}
\label{eq:reg:SmallFourierInfty}
	\| ( f - f \ast \nu_{i+1} )^\wedge \|_{\ell^\infty(\Z^n)}
	\lesssim \delta_{i+1} + \eta_{i+1}
	\leq c \kappa(\eps,d_i^{-1},\wt{\delta}_i),
\end{align}
for a small constant $c > 0$
provided that we choose the $\delta_j,\eta_j$
recursively satisfying
\begin{align*}
	\max(\delta_{i+1},\eta_{i+1}) = c \min( \kappa(\eps,d_i^{-1},\wt{\delta}_i), \eps^2 ).
\end{align*}
At this stage we have obtained
the desired bounds~\eqref{eq:reg:RegLemmaEqs} 
over $\T^n$ and for a Bohr set
$\wt{B}_i = B_i(\Gamma_i,\wt{\delta}_i)$,
and from~\eqref{eq:reg:DimBound} and 
the construction of the $\delta_i$
it follows that
\begin{align*}
	d_i \lesssim_{\eps,\kappa} 1
	\quad\text{and}\quad
	\delta_i \gtrsim_{\eps,\kappa} 1.
\end{align*}

To finish the proof we now consider
the functions $f_1,f_2,f_3$ as functions
on $\R^n$ supported on $\cube{2}$.
Since $f$ and the Bohr sets measures $\nu_i$
are supported on $\cube{8}$,
the convolutions $f \ast \nu_i$ over $\T^n$ may be readily interpreted
as convolutions over $\R^n$, and the functions $f_i$ are supported on $\cube{4}$.
The properties~\eqref{eq:reg:SmallL2} and~\eqref{eq:reg:SmallAlmostPeriodicity}
are readily viewed as holding over $\R^n$, thus we
only need to verify that $f_3$ has the appropriate Fourier decay
at real frequencies.
We claim that since $f_3$ has support in $\cube{4}$,
we have $\| \wh{f}_3 \|_{L^\infty(\R^n)} \lesssim \| \wh{f}_3 \|_{\ell^\infty(\Z^n)} $
and by taking the constant $c$ in~\eqref{eq:reg:SmallFourierInfty}
small enough, we obtain the desired Fourier decay estimate.
To prove this claim, consider a smooth bump function
$\chi$ equal to $1$ on $\cube{4}$.
For $\xi \in \R^n$, expanding $f$ as a Fourier series yields
\begin{align*}
	\wh{f}_3(\xi)
	&= \int_{[-\frac{1}{4},\frac{1}{4}]^n} f_3(x) \chi(x) e( - \xi \cdot x ) \dx
	\\
	&= \sum_{k \in \Z^n} \wh{f}_3(k) \int_{\R^n} \chi(x) e( (k-\xi) \cdot x ) \dx
	\\
	&= \sum_{k \in \Z^n} \wh{f}_3(k) \wh{\chi}( \xi - k).
\end{align*}
Using the smoothness of $\chi$, it follows that, uniformly in $\xi \in \R^n$,
\begin{align*}
	|\wh{f}_3(\xi)|
	\lesssim \| \wh{f}_3 \|_{\ell^\infty(\Z^n)}
	\sum_{k \in \Z^n}
	(1 + |\xi - k|)^{-(n+1)}
	\lesssim \| \wh{f}_3 \|_{\ell^\infty(\Z^n)}.
\end{align*}
\end{proof}

\section{Uniform restriction estimates for fractal measures}
\label{sec:restr}

In this section we obtain restriction estimates
for fractal measures satisfying dimensionality
and Fourier decay conditions, with uniformity
in all the parameters involved.
Throughout this section,
we liberate $\mu, \alpha,\beta$ from their usual meaning,
and we track dependencies on all parameters such as the dimension $n$.
To facilitate our quoting of the literature, 
we first recall the functional equivalences
in Tomas' $T^*T$ argument~\cite[Chapter~7]{Wolff:Book}.

\begin{fact}
\label{thm:restr:RestrEquivs}
Suppose that $\mu \in \meas{n}$ and $p \in (1,+\infty]$,
and that $p'$ is given by $\frac{1}{p} + \frac{1}{p'} = 1$.
Let $R > 0$.
The following statements are equivalent:
\begin{align}
	\label{eq:restr:Restriction}
	&\phantom{\forall f \in \schw{n}} &
	\| \wh{f} \|_{L^2(\dmu)} &\leq R \| f \|_{L^{p'}(\R^n)}
	& \forall\, f &\in \schw{n}, \\
	\label{eq:restr:Extension}
	&\phantom{\forall f \in \schw{n}} &
	\| \wh{g\dmu} \|_{L^p(\R^n)} &\leq R \| g \|_{L^2(\dmu)}
	& \forall\, g &\in L^2(\dmu).
\end{align}
\end{fact}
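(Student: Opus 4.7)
The equivalence is the classical $T^*T$ duality underlying Tomas' restriction argument. The restriction operator $T \colon f \mapsto \wh{f}|_{\Supp \mu}$, viewed as a map $L^{p'}(\R^n) \to L^2(\dmu)$, and the extension operator $T^* \colon g \mapsto \wh{g\,\dmu}$, viewed as $L^2(\dmu) \to L^p(\R^n)$, are formal adjoints and hence have the same operator norm. My plan is to verify the adjoint identity by Fubini and then extract each of \eqref{eq:restr:Restriction} and \eqref{eq:restr:Extension} from the other via $L^p$--$L^{p'}$ duality.

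First I would record that for $f \in \schw{n}$ and $g \in L^2(\dmu)$,
\[
\int_{\R^n} \wh{g\,\dmu}(x)\,f(x)\,\dx
= \int_{\R^n} g(\xi)\,\wh{f}(\xi)\,\dmu(\xi),
\]
which follows from Fubini's theorem, absolute convergence being ensured by the finiteness of $\mu$ (so $g \in L^1(\dmu)$ by Cauchy--Schwarz) and the rapid decay of $f$.

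For \eqref{eq:restr:Extension} $\Rightarrow$ \eqref{eq:restr:Restriction}, I would fix $f \in \schw{n}$ and, in order to avoid circularly assuming $\wh f \in L^2(\dmu)$, use the truncations $g_N = \cjg{\wh{f}} \cdot \mathbf{1}_{A_N}$ with $A_N = \{|\wh f| \leq N\} \cap B(0,N)$, which are bounded and compactly supported, hence in $L^2(\dmu)$. The pairing identity, Hölder, and \eqref{eq:restr:Extension} give
\[
\int |\wh f|^2 \mathbf{1}_{A_N}\,\dmu
= \int g_N\,\wh f\,\dmu
\leq \|\wh{g_N\,\dmu}\|_{L^p}\,\|f\|_{L^{p'}}
\leq R\,\Big(\int |\wh f|^2 \mathbf{1}_{A_N}\,\dmu\Big)^{1/2}\,\|f\|_{L^{p'}}.
\]
Cancelling one factor and letting $N \to \infty$ by monotone convergence yields \eqref{eq:restr:Restriction}.

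Conversely, for \eqref{eq:restr:Restriction} $\Rightarrow$ \eqref{eq:restr:Extension}, I would fix $g \in L^2(\dmu)$ and invoke $L^p$--$L^{p'}$ duality, restricting the supremum to Schwartz test functions by density of $\schw{n}$ in $L^{p'}(\R^n)$ (valid for all $p \in (1,\infty]$). For each such $f$, combining the pairing identity, Cauchy--Schwarz on $L^2(\dmu)$, and \eqref{eq:restr:Restriction} gives
\[
\Big|\int \wh{g\,\dmu}\,f\,\dx\Big|
= \Big|\int g\,\wh f\,\dmu\Big|
\leq \|g\|_{L^2(\dmu)}\,\|\wh f\|_{L^2(\dmu)}
\leq R\,\|g\|_{L^2(\dmu)}\,\|f\|_{L^{p'}},
\]
and taking the supremum delivers \eqref{eq:restr:Extension}. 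No step presents a serious obstacle; the only mild subtlety is the truncation needed in the first direction to avoid presupposing integrability of $\wh f$ against $\dmu$.
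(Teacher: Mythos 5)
Your argument is correct: the pairing identity plus $L^p$--$L^{p'}$ duality (with the truncation, which is in fact not even needed since $\wh{f}$ is bounded and $\mu$ is finite, so $\wh f\in L^2(\dmu)$ automatically) is exactly the standard $T^*T$ equivalence. The paper does not prove this Fact but simply cites it from Wolff's book, and your proof is the same classical argument given there.
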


We now fix two exponents $0 < \beta \leq \alpha \leq n$
and two constants $A,B \geq 1$,
an we restrict our attention to probability measures $\mu$ on $\R^n$ satisfying
\begin{align}
	\label{eq:restr:BallDecay}
	&\phantom{(x \in \R^n, r > 0 )} &
	\mu\big[B(x,r)\big] &\leq A r^{\alpha} &
	&(x \in \R^n, r > 0), \\
	\label{eq:restr:FourierDecay}
	&\phantom{(\xi \in \R^n)} &	
	|\wh{\mu}(\xi)| &\leq B (1 + |\xi|)^{-\beta/2}
	&&(\xi \in \R^n).
\end{align}
We define the critical exponent
\begin{align}
\label{eq:restr:CritExp}
	p_0 = 2 + \frac{4(n-\alpha)}{\beta},
\end{align}
so that the Mitsis-Mockenhaupt restriction 
theorem~\cite{Mitsis:RestrFrac,Mockenhaupt:RestrFrac}
states that each of the inequalities in Fact~\ref{thm:restr:RestrEquivs}
holds for $p > p_0$, for a certain constant $R = R(A,B,\alpha,\beta,p,n)$.
We wish to use~\eqref{eq:restr:Extension} with $g \equiv 1$
and $p = 2 + \delta$ with a fixed small $\delta > 0$, 
which is possible when $\alpha$ is close enough to $n$ by~\eqref{eq:restr:CritExp},
but to be useful this requires some uniformity in $\alpha$.
The constants in~\cite{Mitsis:RestrFrac,Mockenhaupt:RestrFrac}
can be given explicit expressions in terms of the parameters involved,
and in fact one could likely adapt the version of
Mockenhaupt's argument in~\cite[Proposition~4.1]{LP:Configs},
to relax the condition $\beta  > 2/3$ there to $\beta > 0$.
We provide instead a direct derivation from
the estimate of Bak and Seeger~\cite{BK:RestrFrac},
which includes explicit constants.

\begin{proposition}
\label{thm:restr:BKBound}
Let $\beta_0 \in (0,n)$.
There exists $C_{n,\beta_0} > 0$ such that,
when $\beta \geq \beta_0$, 
the estimate~\eqref{eq:restr:Restriction}
holds for $p \geq p_0$ with
$R = C_{n,\beta_0} \max(A,B)^{p_0/2p}$.
\end{proposition}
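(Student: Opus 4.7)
The plan is to pass to the dual extension form~\eqref{eq:restr:Extension} via Fact~\ref{thm:restr:RestrEquivs}, and then to obtain the desired bound by interpolating between the critical exponent $p_0$ and the trivial exponent $p = \infty$. The two endpoints require different inputs: a sharp fractal restriction theorem at $p_0$, and only that $\mu$ is a probability measure at $\infty$.

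At $p = \infty$, I would use the direct bound
\[
	\|\wh{g\dmu}\|_{L^\infty(\R^n)} \leq \|g\|_{L^1(\dmu)} \leq \|g\|_{L^2(\dmu)},
\]
valid since $\mu(\R^n) = 1$, giving~\eqref{eq:restr:Extension} at $p=\infty$ with constant $R_\infty = 1$. At $p = p_0$, I would invoke the endpoint extension estimate of Bak and Seeger~\cite{BK:RestrFrac} for measures satisfying~\eqref{eq:restr:BallDecay}--\eqref{eq:restr:FourierDecay}. Tracking the explicit constants in their argument, the estimate should take the form
\[
	\|\wh{g\dmu}\|_{L^{p_0}(\R^n)} \leq C_n \max(A,B)^{1/2} \|g\|_{L^2(\dmu)},
\]
for some $C_n$ depending only on $n$. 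The symmetric power $1/2$ is consistent with the natural scaling of the extension problem, since the powers of $A$ and $B$ produced by the $TT^*$ argument of~\cite{BK:RestrFrac} sum to $1/2$ and can be bounded by $\max(A,B)^{1/2}$ at the cost of an absolute constant.

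Then I would interpolate these two endpoints by Riesz--Thorin on the target exponent. Writing $\tfrac{1}{p} = \tfrac{\theta}{p_0}$ with $\theta = p_0/p \in [0,1]$ for $p \in [p_0, \infty]$, this yields
\[
	\|\wh{g\dmu}\|_{L^p(\R^n)} \leq \bigl(C_n \max(A,B)^{1/2}\bigr)^{p_0/p} \cdot 1^{1 - p_0/p} \|g\|_{L^2(\dmu)} = C_n^{p_0/p} \max(A,B)^{p_0/(2p)} \|g\|_{L^2(\dmu)}.
\]
The hypothesis $\beta \geq \beta_0$ bounds $p_0 \leq 2 + 4n/\beta_0$, so $p_0/p \leq 1$ and the prefactor $C_n^{p_0/p}$ is absorbed into a constant $C_{n,\beta_0}$. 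Converting back to~\eqref{eq:restr:Restriction} via Fact~\ref{thm:restr:RestrEquivs} then completes the argument.

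The hard part will be extracting from~\cite{BK:RestrFrac} the explicit shape of the endpoint constant, and in particular confirming that its dependence on $\alpha$ and $\beta$ is benign under the sole assumption $\beta \geq \beta_0$. The original Bak--Seeger proof proceeds via a Whitney decomposition of the $TT^*$ kernel together with a bilinear-to-linear passage; while the power $\max(A,B)^{1/2}$ is implicit in their bounds, a careful audit of each step will be needed to ensure that no hidden blow-up occurs as $\alpha \to n$ (where $p_0 \to 2$) or as $\beta \to \beta_0$, and in particular that any residual dependence on $\beta$ can be absorbed into a constant of the form $C_{n,\beta_0}$.
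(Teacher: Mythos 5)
Your strategy is sound and would yield the stated bound, but it takes a different route from the paper, and it leaves its key input in a weaker state than necessary. The paper does not extract an endpoint estimate and interpolate: it quotes Bak--Seeger's Eq.~(1.5) directly, which is already stated for the whole range $p \geq p_0$ \emph{with explicit constants} of the form $(C_{a,b,d})^{2/q} A^{(1/q)(2/p_0)} B^{(1/q)(1-2/p_0)}$ on the restriction side $\| \wh{f} \|_{L^q(\dmu)} \leq R \| f \|_{L^{p'}(\R^n)}$ with $q = 2p/p_0$. Since the exponents of $A$ and $B$ sum to $1/q = p_0/(2p)$, one gets $\max(A,B)^{p_0/(2p)}$ immediately, and the passage from $L^q(\dmu)$ down to $L^2(\dmu)$ is just nesting of norms for the probability measure $\mu$ (valid because $q \geq 2$ when $p \geq p_0$). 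The uniformity in $\alpha,\beta$ that you flag as ``the hard part'' is dispatched by observing that $\beta_0 \leq \beta \leq \alpha \leq n$ confines $(\alpha,\beta)$ to a compact set on which the Bak--Seeger constant is bounded; no audit of their Whitney decomposition or $TT^*$ argument is required.

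By contrast, you stay on the extension side, take only the endpoint $p = p_0$ from Bak--Seeger, supply the trivial $L^2(\dmu) \to L^\infty$ bound (correct, via Cauchy--Schwarz and $\mu(\R^n)=1$), and Riesz--Thorin the two. The interpolation itself is legitimate (fixed domain $L^2(\dmu)$, varying target), and it reproduces the exponent $p_0/(2p)$ on $\max(A,B)$; your ``norm-loss'' step via interpolation with the trivial endpoint plays exactly the role of the paper's norm-nesting step. Two corrections: your endpoint constant should be $C_{n,\beta_0}$ rather than $C_n$, since the Bak--Seeger constant depends on $a=\alpha$ and $b=\beta/2$ (this is harmless by the compactness remark above); and the ``careful audit'' you defer to is unnecessary, because the explicit $A$- and $B$-dependence is part of the \emph{statement} of \cite[Eq.~(1.5)]{BK:RestrFrac}, not something buried in its proof. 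As written, your argument is a conditional one resting on a constant you have not pinned down; once you replace the audit by a direct citation of Eq.~(1.5) (at the endpoint or, more simply, at the exponent $p$ itself), the proof closes.
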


\begin{proof}
Apply~\cite[Eq.~(1.5)]{BK:RestrFrac},
replacing $a \leftarrow \alpha$, $b \leftarrow \beta/2$,
$d \leftarrow n$, $p \leftarrow p'$,
so that $q = \frac{2p}{p_0}$;
and note that $\alpha,\beta$ belong to the compact interval $[\beta_0,n]$.
Since $q \geq 2$ for $p \geq p_0$,
by nesting of $L^s(\dmu)$ norms this yields
\begin{align*}
	\| \wh{f} \|_{L^{2}(\dmu)} 
	&\leq
	\| \wh{f} \|_{L^{q}(\dmu)} \\
	&\leq (C_{n,\beta_0})^{\tfrac{2}{q}}
	A^{\tfrac{1}{q} \cdot \tfrac{2}{p_0}} B^{ \frac{1}{q} \big(1 - \tfrac{2}{p_0}\big) } \| f \|_{L^{p'}(\R^n)} \\
	&\leq C_{n,\beta_0} \max(A,B)^{\tfrac{p_0}{2p}} \| f \|_{L^{p'}(\R^n)}.
\end{align*}
\end{proof}

Alternatively, one may choose to
track down the dependencies on constants
in Mitsis' simpler argument~\cite{Mitsis:RestrFrac},
which would lead to a similar estimate 
for the constant $R$ in~\eqref{eq:restr:Restriction}, 
upto a harmless (for our argument) factor $(p-p_0)^{-1}$.
Via Proposition~\ref{thm:restr:BKBound},
it is now possible to bound the moments of $\wh{\mu}$
of order slightly larger than $2$
when $\alpha$ is close enough to $n$, 
with only a moderate dependency of constants on $\alpha$.

\begin{proposition}
\label{thm:restr:MuMoment}
Let $\delta \in (0,1)$ and $\beta_0 \in (0,n)$.
Suppose that $\mu$ is a probability measure 
satisfying~\eqref{eq:scheme:BallDecay} and~\eqref{eq:scheme:FourierDecay}.
Then, uniformly for $n - \tfrac{\delta\beta_0}{4} \leq \alpha < n$
and $\beta_0 \leq \beta < n$, we have
\begin{align*}
	\| \wh{\mu} \|_{2 + \delta}
	\lesssim_{\beta_0,n} D_\alpha^{1/2}.
\end{align*}
\end{proposition}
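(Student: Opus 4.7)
The plan is to reduce the claim to Proposition~\ref{thm:restr:BKBound} via the $T^*T$ equivalence in Fact~\ref{thm:restr:RestrEquivs}, applied to the constant function $g\equiv 1$. With this choice, the extension estimate~\eqref{eq:restr:Extension} becomes $\|\wh{\mu}\|_{L^p(\R^n)} \leq R \cdot \|1\|_{L^2(d\mu)} = R$, since $\mu$ is a probability measure. So I only need to secure the hypothesis of Proposition~\ref{thm:restr:BKBound} at the exponent $p = 2+\delta$ and extract a bound on $R$ of the right shape.

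First I would verify that $p = 2+\delta$ lies above the critical exponent. Recall $p_0 = 2 + 4(n-\alpha)/\beta$. Since $\beta \geq \beta_0$ and $n - \alpha \leq \delta\beta_0/4$ by hypothesis, we have
\begin{equation*}
    p_0 \leq 2 + \frac{4}{\beta_0} \cdot \frac{\delta\beta_0}{4} = 2 + \delta = p,
\end{equation*}
so Proposition~\ref{thm:restr:BKBound} applies with $A = D$ and $B = D_\alpha$, yielding a constant $R \leq C_{n,\beta_0} \max(D, D_\alpha)^{p_0/(2p)}$ in~\eqref{eq:restr:Restriction} (equivalently, in~\eqref{eq:restr:Extension}).

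Next I would turn the factor $\max(D,D_\alpha)^{p_0/(2p)}$ into something of size $D_\alpha^{1/2}$ up to constants depending only on $n$, $\beta_0$ and $D$. Since $p_0 \leq p$ we have $p_0/(2p) \leq 1/2$, and since $\max(D, D_\alpha) \geq 1$, it follows that $\max(D, D_\alpha)^{p_0/(2p)} \leq \max(D,D_\alpha)^{1/2} \leq D^{1/2} \, D_\alpha^{1/2}$. Absorbing the constant $D$ (fixed in Section~\ref{sec:scheme} and allowed by our conventions to go into the implicit constant) produces
\begin{equation*}
    \|\wh{\mu}\|_{2+\delta} \leq R \lesssim_{n,\beta_0} D_\alpha^{1/2},
\end{equation*}
as desired.

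The argument is almost entirely a careful chaining of known estimates; the only delicate point is to ensure that the explicit dependence of $R$ on $A$ and $B$ in Proposition~\ref{thm:restr:BKBound} is exploited correctly, since the naive Mitsis--Mockenhaupt statement does not specify how $R$ depends on the Fourier decay constant. This is why I rely on the explicit Bak--Seeger form already quoted. No further difficulty arises, as the hypothesis $\alpha \geq n - \delta\beta_0/4$ was designed precisely to push $p_0$ below $2+\delta$.
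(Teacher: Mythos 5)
Your proposal is correct and follows essentially the same route as the paper: check that $p_0 \leq 2+\delta$ in the stated range of $\alpha,\beta$, invoke Proposition~\ref{thm:restr:BKBound} with $A=D$ and $B=D_\alpha$, and apply the extension form of the estimate to $g\equiv 1$. Your extra care in bounding $\max(D,D_\alpha)^{p_0/2p}\leq D^{1/2}D_\alpha^{1/2}$ (using $D,D_\alpha\geq 1$ and $p_0\leq p$) is exactly the step the paper compresses into ``$A=D\asymp 1$'' under its standing convention that implicit constants may depend on $D$.
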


\begin{proof}
We consider the exponent $p = 2 + \delta$.
Recalling~\eqref{eq:restr:CritExp}, 
we have $p \geq p_0$ in the stated range of $\alpha$.
We can therefore invoke Proposition~\ref{thm:restr:BKBound}
with $A = D \asymp 1$ and $B = D_\alpha$,
so that the extension inequality~\eqref{eq:restr:Extension} 
holds for $g \equiv 1$ with 
$R \lesssim_{\beta_0,n} D_\alpha^{1/2}$.
\end{proof}

\bibliographystyle{amsplain}
\bibliography{quadfractals_revised_arxiv2}

\bigskip

\textsc{\small Kevin Henriot}

\textsc{\footnotesize Department of mathematics,
University of British Columbia,
Room 121, 1984 Mathematics Road,
Vancouver BC V6T 1Z2, Canada
}

\textit{\small Email address: }\texttt{\small khenriot@math.ubc.ca}

\medskip

\textsc{\small Izabella \L{}aba}

\textsc{\footnotesize Department of mathematics,
University of British Columbia,
Room 121, 1984 Mathematics Road,
Vancouver BC V6T 1Z2, Canada
}

\textit{\small Email address: }\texttt{\small ilaba@math.ubc.ca}

\medskip

\textsc{\small Malabika Pramanik}

\textsc{\footnotesize Department of mathematics,
University of British Columbia,
Room 121, 1984 Mathematics Road,
Vancouver BC V6T 1Z2, Canada
}

\textit{\small Email address: }\texttt{\small malabika@math.ubc.ca}

\end{document}